\newtheorem{theorem}{Theorem}[section]
\newtheorem{lemma}[theorem]{Lemma}
\theoremstyle{definition}
\newtheorem{definition}[theorem]{Definition}
\newtheorem{example}[theorem]{Example}
\newtheorem{proposition}[theorem]{Proposition}
\newtheorem{corollary}[theorem]{Corollary}
\theoremstyle{remark}
\newtheorem{remark}[theorem]{Remark}
\numberwithin{equation}{section}
\begin{document}

\title{An Introduction to Complex Game Theory}

\author{Nick Dimou}
\address{Department of Mathematics, National Kapodistrian University of Athens, Greece}
\email{dimou.nikolas@gmail.com}

\subjclass[2020]{Primary 91A70, 91A05; Secondary 52B12, 11M26}

\keywords{Complex Game Theory, Complex Game, Complex Strategy, Complex Nash Equilibrium, Minimax Theorem}

\begin{abstract}
The known results regarding two-player zero-sum games are naturally generalized in complex space and are presented through a complete compact theory. The payoff function is defined by the real part of the payoff function in the real case, and pure complex strategies are defined by the extreme points of the convex polytope $S_\alpha^m:=\{z\in\mathbb{C}^m:$ $|argz|\leqq\alpha,\;\sum_{i=1}^{m}z_i=1\}$ for ``strategy argument" $\alpha$ in $(0,\frac{\pi}{2})e$. These strategies allow definitions and results regarding Nash equilibria, security levels of players and their relations to be extended in  $\mathbb{C}^{m}$. A new constructive proof of the Minimax Theorem in complex space is given, which indicates a method for precisely calculating the equilibria of two-player zero-sum complex games. A simpler solution method of such games, based on the solutions of complex linear systems of the form $Bz=b$, is also obtained.
\end{abstract}

\maketitle

\section{Introduction}
The concept of extending major definitions and results of Game Theory, specifically of two--player zero--sum games, to the complex space was firstly introduced by G. J. Murray \cite{23}. Soon after, Mond B. and Murray G.J. published a series of papers together (see \cite{19,20}) regarding the generalization of some definitions of Game Theory, the proof of the Minimax Theorem in complex space and the solution of complex matrix games. Their approach was based on the geometric structure and properties of pointed polyhedral closed convex cones in $\mathbb{C}^{m}$. In particular, strategy sets are abstract pointed convex cones that are generated by a finite number of probability vectors. Definitions regarding matrix and symmetric games, as well as some examples of 2x2 complex matrix games and their solutions were also given. Since this first attempt, no other considerable results regarding Game Theory in complex space have followed, with the main and most obvious reasons being: a) this first approach to the natural extension was based on the general form of convex polyhedral cones and that of their corresponding generators--probability vectors; no specific cases of strategy sets were treated, and therefore no other significant new results analogous to those in real space could be found and an exact (algorithmic) solution method for complex matrix games could not be suggested, and b) it was soon realised that an extension of the theory of matrix games in $\mathbb{C}^{m}$ would offer no new results in the context of the existence of game equilibria. Indeed, as it has already been highlighted in older related work (see \cite{10}), if one uses 2n--real space instead of n--complex space then already known results regarding the existance of a Nash equilibrium are easily acquired.\par
In this paper we continue this first work of Mond B. and Murray G.J. and we present a more complete theory of two-player zero-sum games in complex space, which can be characterized as a natural extension of the already known real-space theory.  We achieve this by treating a special case of strategy sets, as they are defined in \cite{19,20}. In particular, strategy sets are defined here by convex polytopes of the form $S_\alpha^m:=\{z\in\mathbb{C}^m:$ $|argz|\leqq\alpha$ and $\sum_{i=1}^{m}z_i=1\}$ with ``strategy argument" $\alpha$ in $(0,\frac{\pi}{2})e$ and with generators--extreme points that represent the generalized form of the extreme points of the standard--probability simplex $\mathbb{P}^m$ of $\mathbb{R}^{m}$. The geometric and optimization properties of these probability vectors allow us to obtain definitions and results, both new and complementary to the ones that appear in the two aforementioned papers. To be more specific, we use such probability vectors to define pure and mixed complex strategies and the mixed extension of a complex matrix game. This separation between the strategies of players, which also appears in the known--real case, is not seen clearly in the previous work and is what appears to be the cornestone of our theory. It is important for the reader to comprehend two direct conclusions: Firstly that the following analysis also holds for other special cases of strategy sets. In fact one can straight away continue the work of Mond B. and Murray G.J. by dealing with the generic form of generated pointed convex cones that appears in their work; it suffices to define pure and complex strategies as done in the following, as well as make accordinate assumptions necessary for the proof of many of the subsequent results (e.g. for the constructive proof of the Minimax Theorem). And secondly that this geometric/game--theoretic approach, that is the use of extreme points of complex convex polytopes as pure strategies of players, can be applied to more generic cases of topological spaces and a complete theory of games can be readily derived.\par 

Most results presented here, are embeded in the already existing real--space theory, considering that the payoff function of such complex games represents the real part of the payoff function of matrix games in the real space, that is $v=Re(x^*Ay)$. However, our analysis also holds for other (real) bilinear payoff functions that correspond to the approach taken, if the appropriate adjustments to the geometric results used are made (e.g. if the payoff function is equal to the imaginary part of the payoff function $u$ of matrix games in $\mathbb{R}^{n}$, then one could use our theory by simply taking the payoff matrix $-iA$ instead of the payoff matrix $A$). Many of the definitions we give derive from the definitions in the real case, but are accordingly formulated and adjusted in order to correspond to the multiplication properties of the new bilinear payoff function and the compact structure of the theory of games in $\mathbb{C}^{m}$ that we wish to present. Same goes with the propositions and main theorems, as we give different proofs of already known results in $\mathbb{R}^{m}$ whose construction goes with the desirable compact theory in complex space. The reader will also notice that all definitions and results are (mathematically) strictly related to a real vector defined as the ``strategy argument". When the strategy argument of a two-player zero-sum complex game is equal to zero (i.e. to the zero vector), then the theory in $\mathbb{C}^{m}$ is identified with the respective theory in $\mathbb{R}^{m}$. Therefore, one could argue that this work represents a natural generalization of the theory of two-player zero-sum games in complex space.\par
The paper is organized as follows: In Section 2 we give some basic geometric notations and known results of linear programming. In Section 3 we prove the two main results of this paper, necessary for the development of the following theory. In particular, through Theorem 3.1 the algebraic and geometric form of all generators of the set $S_\alpha^m:=\{z\in\mathbb{C}^m:$ $|argz|\leqq\alpha$ and $\sum_{i=1}^{m}z_i=1\}$ is obtained, while Theorem 3.2 shows that the strategy set $S_\alpha^m$ is a convex polytope. In Section 4 we restate some definitions given in \cite{19,20}, we give some complementary results of the theory that appears in the latter, we define pure and mixed strategies and we prove that players can improve the value of their security level when they choose to play mixed complex strategies instead of pure ones. In Section 5 we use strong duality for conic linear programming problems in order to give a constructive proof of the Minimax Theorem in complex space. This constructive proof represents a complex-space generalizatized form of the proof given by Dantzig in \cite{6}. Hence, even though the existence of a Nash equilibrium in $\mathbb{C}^{m}$ is already known and the Minimax Theorem has already been proved (see \cite{11,19,20}), a possible solution method is implied for the exact calculation of Nash equilibria of two-player zero-sum complex games. In Section 6 we re-state the definition of strategy domination as it is already known and we generalize the process of iterated elimination via the use of the generators--pure strategies of the strategy set $S^{m}_{\alpha}$. In Section 7 we show that the solution of two-player zero-sum games is also based on the solution of complex linear systems of the form $Bz=b$ with the use of complex equalizing strategies, similar to the real case. These strategies satisfy the complementarity conditions $Re(y,b)-Re(y,Ax)=0$, $Re(x,c)-Re(x,A^*y)=0$ and therefore agree with the constructive proof of the Minimax Theorem in Section 5. In Section 8 the elimination process, equalizing strategies and algorithmic calculation methods derived from the proof of the Minimax Theorem are used in order to describe a step-by-step solution method for complex matrix games. Lastly, in Section 9 we define ``games of common argument" and we re-define symmetric games in the complex case.

\[\]\[\]
\section{Geometric Preliminaries}
In this section some basic notations and some important geometric and linear programming results are given.\\

Let $S\subseteq\mathbb{C}^m$. The dual cone of $S$, denoted $S^*$, is the set: $S^*=\{w\in\mathbb{C}^m: Re(z,w)\ge0 \;\; \forall z\in S\}$. Note that $S$ is a closed convex cone\footnote[1]{A set $S\subseteq\mathbb{C}^m$ is a convex cone if and only if $\lambda S\subseteq S$ and $S+S\subseteq S$.} if and only if $S=(S^*)^*$. We define the dual cone by the function $Re(\cdot,\cdot)$ since we only deal with this payoff function in our analysis.\par
A point $a\in S$ is called an extreme point if $\nexists b,c\in S$ and $\nexists\lambda\in(0,1)$ such that $b\not=c$ and $\lambda b+(1-\lambda)c=a$. That is, a point $a\in S$ is an extreme point if it cannot be written as a convex combination of points not equal to $a$. We denote $\mathcal{A}(S)$ for the set of extreme points of $S$.\par
Let $A\subseteq\mathbb{C}^m$. The smallest convex set in $\mathbb{C}^m$ that contains $A$ is called the convex hull of $A$. We denote $co(A)$ or $conv(A)$ for the convex hull of $A$. The following theorem holds:
\[\]
\begin{theorem}\label{th:2.6}
Let $S,\;A\subseteq\mathbb{C}^{m}$. Then $S=co(A)$ if and only if S contains all convex combinations of the points of $A$ and no other points.
\end{theorem}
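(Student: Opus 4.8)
The plan is to introduce the set $C$ of all finite convex combinations of points of $A$ and to prove that $co(A)=C$; the stated biconditional then follows at once, since the condition ``$S$ contains all convex combinations of the points of $A$ and no other points'' is precisely the assertion $S=C$. Thus the whole theorem reduces to establishing the single set identity $co(A)=C$ by a double inclusion.

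First I would verify that $C$ itself is a convex set containing $A$. Containment is clear, as each $a\in A$ equals the trivial combination $1\cdot a\in C$. For convexity, given two points $p=\sum_{i=1}^{k}\lambda_i a_i$ and $q=\sum_{j=1}^{l}\mu_j b_j$ in $C$ (with $a_i,b_j\in A$ and nonnegative coefficients summing to $1$) and $t\in[0,1]$, the point $tp+(1-t)q$ rewrites as a single combination of the $a_i$ and $b_j$ whose coefficients $t\lambda_i$ and $(1-t)\mu_j$ are nonnegative and sum to $1$; hence $tp+(1-t)q\in C$. Since $co(A)$ is by definition the smallest convex set containing $A$, this yields the inclusion $co(A)\subseteq C$.

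For the reverse inclusion I would show that every convex set $D$ with $A\subseteq D$ already contains all convex combinations of points of $A$, so that in particular $C\subseteq co(A)$. This is an induction on the number $k$ of points appearing in the combination. The base case $k=1$ is just $A\subseteq D$. For the inductive step, consider $z=\sum_{i=1}^{k}\lambda_i a_i$ with $a_i\in A$ and $k\ge 2$. If some $\lambda_i=1$ the combination collapses to a single point of $A\subseteq D$; otherwise every $\lambda_i<1$ and, after relabelling, $1-\lambda_k>0$, so I can write $z=\lambda_k a_k+(1-\lambda_k)\,w$ with $w=\sum_{i=1}^{k-1}\frac{\lambda_i}{1-\lambda_k}\,a_i$. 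The point $w$ is a convex combination of $k-1$ points of $A$ and so lies in $D$ by the inductive hypothesis, whence $z\in D$ by convexity of $D$.

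Combining the two inclusions gives $co(A)=C$, and therefore $S=co(A)$ holds if and only if $S=C$, i.e.\ if and only if $S$ contains all convex combinations of the points of $A$ and nothing else. The only delicate point is the degenerate case in the induction: one must separate the situation where a single weight equals $1$ from the generic case, so as to guarantee that the scaling factor $1-\lambda_k$ is strictly positive and the reduced weights $\frac{\lambda_i}{1-\lambda_k}$ are well defined and again form a convex combination.
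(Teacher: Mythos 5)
Your proof is correct. Note that the paper states Theorem \ref{th:2.6} without any proof at all (it is invoked as a standard fact of convex geometry), so there is no argument of the paper's to compare against; your double-inclusion argument --- showing the set $C$ of finite convex combinations is a convex set containing $A$ (hence $co(A)\subseteq C$), and by induction on the number of points that every convex set containing $A$ contains $C$ (hence $C\subseteq co(A)$) --- is the standard proof, and your handling of the degenerate case $\lambda_i=1$ in the inductive step, ensuring $1-\lambda_k>0$ before rescaling, is exactly the care the argument requires.
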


A set $S\subseteq\mathbb{C}^m$ is called a convex polytope if (i) it is convex, (ii) it contains a finite number $n$ of (extreme) points $z_1,z_2,...,z_n$ such that $S=co(\{z_1,z_2,...,z_n\})$. We say that $S$ is generated by the points $z_1,z_2,...,z_n$.\par

There is often a big confusion between the definitions of convex polytopes and convex polyhedra in convex geometry and linear programming. In the more recent bibliography, and as used here, convex polyhedra represent the general case of unbounded polytopes and convex polytopes represent the bounded form of convex polyhedra. In the old bibliography this differentiation is usually reversed. We also write ``(extreme)" in the above definition, simply because in this paper we only deal with convex polytopes that are generated by extreme points. Therefore from now on, without loss of generality, when referring to a convex polytope $S\subseteq\mathbb{C}^{m}$ we assume that it is generated by the extreme points that belong to it (note that in this case, by Theorem \ref{th:2.6}, there cannot be an extreme point $z_0$ that belongs to $S$ but doesn't belong to $\{z_1,z_2,...,z_n\}$).\par
We have the subsequent two results regarding convex polytopes:

\begin{proposition}\label{pr:2.11}
Every convex polytope $S$ in $\mathbb{C}^m$ is compact.
\end{proposition}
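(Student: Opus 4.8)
The plan is to prove compactness by exhibiting $S$ as the continuous image of a compact set, which simultaneously yields closedness and boundedness and avoids handling them separately. Throughout I identify $\mathbb{C}^m$ with $\mathbb{R}^{2m}$ as topological spaces, so that the Heine--Borel characterization of compactness (closed and bounded) is available.

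The first step is to invoke the defining property of a convex polytope together with Theorem \ref{th:2.6}. Since $S=co(\{z_1,z_2,\dots,z_n\})$, Theorem \ref{th:2.6} guarantees that $S$ consists of precisely the convex combinations of $z_1,\dots,z_n$ and of no other points, so that
\[
S=\Big\{\textstyle\sum_{i=1}^{n}\lambda_i z_i : \lambda_i\geq 0,\ \sum_{i=1}^{n}\lambda_i=1\Big\}.
\]
Next I would introduce the standard simplex $\Delta:=\{\lambda\in\mathbb{R}^n:\lambda_i\geq 0,\ \sum_{i=1}^{n}\lambda_i=1\}$, which is closed and bounded in $\mathbb{R}^n$ and hence compact, together with the affine map $f\colon\Delta\to\mathbb{C}^m$ given by $f(\lambda)=\sum_{i=1}^{n}\lambda_i z_i$. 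This map is continuous, since each (real and imaginary) coordinate of $f(\lambda)$ is a linear function of $\lambda$. By the displayed description of $S$ we then have $S=f(\Delta)$, and because the continuous image of a compact set is compact, $S$ is compact.

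The argument is essentially routine, and the only point requiring care is the identification $S=f(\Delta)$: this is not merely the definition of the convex hull, but relies on Theorem \ref{th:2.6} to ensure that $S$ contains \emph{no} points beyond the convex combinations of its generators, so that $f(\Delta)$ is all of $S$ rather than a proper subset. If one preferred a more elementary route, boundedness would follow directly from the triangle inequality $\lVert\sum_i\lambda_i z_i\rVert\leq\max_i\lVert z_i\rVert$, while closedness would require a limiting argument passing from a convergent sequence in $S$ to convergent coefficient vectors in the compact set $\Delta$; the continuous-image formulation packages both of these facts into a single clean statement.
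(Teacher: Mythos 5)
Your proposal is correct and follows essentially the same route as the paper: the paper's proof also realizes $S$ as $f(\mathbb{P}^n)$ for the continuous map $f(\lambda)=\sum_{i=1}^{n}\lambda_i z_i$ on the standard simplex, citing Theorem \ref{th:2.6} for the identification $f(\mathbb{P}^n)=S$, and concludes compactness from compactness of the simplex. Your added remarks on why Theorem \ref{th:2.6} is needed for surjectivity onto $S$ are a sound elaboration of the same argument, not a different method.
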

\begin{proof}
Assume $S=co(\{z_1,z_2,...,z_n\})$ where $n\geq2$ (for $n=0$ or $1$ we have nothing to show) and let $f:\mathbb{P}^n\rightarrow\mathbb{C}^m$ be a function such that $f(\lambda)=\lambda_1z_1+\lambda_2z_2+...+\lambda_nz_n$, where $\mathbb{P}^n=\{\lambda\in\mathbb{R}^n:\lambda_i\ge0$ $\forall i$ and $\displaystyle\sum_{i=1}^{n}\lambda_i=1\}$ is the standard simplex of $\mathbb{R}^n$. Then $f$ is obviously continuous and $f(\mathbb{P}^n)=S$ by Theorem \ref{th:2.6}. Hence $S$ is compact, since $\mathbb{P}^n$ is compact.
\end{proof}

\begin{theorem}\label{th:2.16}
Let $S\subseteq\mathbb{C}^m$ be a convex polytope and $H(c,\alpha):=\{z\in\mathbb{C}^{m}:Re(c,z)=\alpha\}$ be a supporting hyperplane of $S$. Then there exists an extreme point $z_0$ of $S$ that belongs to $H(c,\alpha)$.
\begin{proof}
Let us assume that $S=co(\{z_1,...,z_n\})$ $(n\geq2)$ and suppose that there are no extreme points of $S$ that belong to $H(c,\alpha)$. Let $w\in S\cap H(c,\alpha)$ (such point exists since $S$ is closed). Since $S$ is a convex polytope by Theorem \ref{th:2.6} $\exists \lambda_1,\lambda_2,...,\lambda_n\geq 0$ such that $\displaystyle\sum_{i=1}^{n}\lambda_i=1$ and $w=\lambda_1z_1+\lambda_2z_2+...+\lambda_nz_n$ $\Rightarrow Re(c^*w)=\lambda_1Re(c^*z_1)+\lambda_2Re(c^*z_2)+...+\lambda_nRe(c^*z_n)$. Since $Re(c^*w)=\alpha$ and $Re(c^*z_i)>\alpha$ $\forall i\in\{1,2,...,n\}$, we obtain $\alpha>\alpha$ which is a contradiction.\\
\end{proof}
\end{theorem}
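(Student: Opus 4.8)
The plan is to exploit the fact that the real-linear functional $\phi(z):=Re(c,z)$ attains its extreme value over $S$ at one of the generating points, and that a supporting hyperplane is exactly the level set on which this extreme value is achieved. Since $H(c,\alpha)$ supports $S$, after possibly replacing $(c,\alpha)$ by $(-c,-\alpha)$ (which flips the half-space, using $Re((-c)^*z)=-Re(c^*z)$) I may assume without loss of generality that $Re(c,z)\geq\alpha$ for every $z\in S$, with equality for at least one point $w\in S\cap H(c,\alpha)$; such $w$ exists because $S$ is closed (indeed compact, by Proposition \ref{pr:2.11}).

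Write $S=co(\{z_1,\dots,z_n\})$. The key step is the observation that $\phi$ is additive and homogeneous under real scalars, so for any convex combination $w=\sum_{i=1}^n\lambda_i z_i$ (with $\lambda_i\geq0$, $\sum_{i}\lambda_i=1$) one has $\phi(w)=\sum_{i=1}^n\lambda_i\,\phi(z_i)$. Consequently $\phi(w)\geq\min_{1\le i\le n}\phi(z_i)$ for every $w\in S$, and this lower bound is attained at the generator realizing the minimum. In other words, $\min_{z\in S}\phi(z)=\min_{1\le i\le n}\phi(z_i)=:\phi(z_j)$, so that the minimum of the linear functional over the polytope is achieved at an extreme point $z_j$.

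It then remains to reconcile this minimum with the supporting value $\alpha$. Since every point of $S$ satisfies $\phi(z)\geq\alpha$ while $w\in S$ satisfies $\phi(w)=\alpha$, we obtain $\min_{z\in S}\phi(z)=\alpha$, hence $\phi(z_j)=\alpha$, i.e. $z_j\in H(c,\alpha)$. As $z_j$ is an extreme point of $S$, this is the desired point $z_0$, completing the argument.

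The step I expect to require the most care is not deep but is bookkeeping: pinning down the orientation of the supporting half-space (the ``without loss of generality" reduction) and verifying that $Re(c,\cdot)$ genuinely splits over convex combinations — this rests only on the real coefficients $\lambda_i$ factoring out of the inner product, so that $Re(c^*\sum_{i}\lambda_i z_i)=\sum_{i}\lambda_i\,Re(c^*z_i)$. One could equivalently package the whole argument as a proof by contradiction — assuming that no extreme point lies in $H(c,\alpha)$ forces $\phi(z_i)>\alpha$ for all $i$, and then the convex-combination identity yields $\alpha=\phi(w)>\alpha$ — but the direct minimization argument above has the advantage of explicitly producing the extreme point.
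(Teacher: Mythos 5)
Your proof is correct and rests on exactly the same ingredients as the paper's own argument: the identity $Re\bigl(c^*\sum_i\lambda_i z_i\bigr)=\sum_i\lambda_i\,Re(c^*z_i)$ for convex combinations, together with the paper's standing convention that the generators $z_1,\dots,z_n$ of a polytope are its extreme points. The only difference is presentational --- the paper runs precisely the contradiction you sketch in your closing remark (no extreme point in $H$ forces $Re(c^*z_i)>\alpha$ for all $i$, whence $\alpha>\alpha$), while your direct minimization and your explicit handling of the half-space orientation make the same argument slightly more careful than the paper's, which tacitly assumes the orientation $Re(c^*z)\geq\alpha$.
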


\begin{theorem}\label{th:3.12}
Let $c\in\mathbb{C}^{m}$ and $S\subseteq\mathbb{C}^{m}$ be a convex polytope. Now consider the following problem:
\begin{center}
\begin{equation*}{(I)\;\;\;\;\;\;}
\begin{array}{ll}
    min\;\; Re(c^*z)\\
    s.t.\;\; z\in S
    \end{array}
    \end{equation*}
    \end{center}
Then:
\begin{enumerate}
    \item $(I)$ has an optimal feasible solution.
    \item if $z^0$ is an optimal feasible solution of $(I)$, then $z^0\in\partial S$.
    \item if $H(c,a)$ is the hyperplane that corresponds to an optimal feasible solution $z^0$, that is if $H(c,a)=H(c,Re(c^*z^0))$, then $H(c,a)$ supports $S$. 
\end{enumerate}
\end{theorem}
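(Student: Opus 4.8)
The plan is to establish the three claims in order, exploiting the compactness of $S$ (Proposition \ref{pr:2.11}) and the fact that $z \mapsto Re(c^*z)$ is a continuous real-valued function on $\mathbb{C}^m$. For part (1), I would note that $S$ is compact by Proposition \ref{pr:2.11} and that the objective $g(z) := Re(c^*z)$ is continuous (indeed, it is a real-linear functional). The extreme value theorem then immediately guarantees that $g$ attains its minimum on $S$, so an optimal feasible solution $z^0$ exists.

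For part (2), I would argue by contradiction: suppose the optimal solution $z^0$ lies in the interior of $S$. Then there is an open ball $B(z^0, \varepsilon) \subseteq S$. Since $c \neq 0$ (the case $c = 0$ is trivial, as then every point is optimal and lies on $\partial S$ anyway, or can be treated separately), one can perturb $z^0$ in the direction that decreases $g$. Concretely, writing $g$ as a real-linear functional, there is a real direction $d$ with $Re(c^* d) < 0$; then $z^0 + t d \in S$ for small $t > 0$ and $g(z^0 + td) = g(z^0) + t\, Re(c^* d) < g(z^0)$, contradicting optimality. Hence $z^0 \in \partial S$.

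For part (3), set $a := Re(c^* z^0)$ so that by definition $H(c,a) = H(c, Re(c^* z^0))$ passes through $z^0$. I would then verify that this hyperplane is a supporting hyperplane of $S$, i.e. that $S$ lies entirely in one of the two closed half-spaces determined by $H(c,a)$ and that $H(c,a) \cap S \neq \emptyset$. The latter holds since $z^0 \in H(c,a) \cap S$. For the former, optimality of $z^0$ gives $Re(c^* z) \geq Re(c^* z^0) = a$ for every $z \in S$, so $S$ is contained in the closed half-space $\{z : Re(c^* z) \geq a\}$. This is exactly the supporting-hyperplane condition.

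The routine parts are (1) and (3), which follow directly from compactness and the definition of optimality respectively. The main subtlety lies in part (2): one must justify the existence of an admissible decreasing direction inside $S$, which relies on treating $Re(c^*\cdot)$ carefully as a real-linear functional on the underlying real vector space $\mathbb{R}^{2m} \cong \mathbb{C}^m$ and on handling the degenerate case $c = 0$ separately (where the claim must be interpreted or the hypothesis $c \neq 0$ assumed). I would keep an eye on whether the paper's notion of $\partial S$ is the topological boundary relative to $\mathbb{C}^m$ or relative to the affine hull of $S$; for a full-dimensional polytope these coincide, but for a lower-dimensional $S$ every point is a boundary point and part (2) becomes immediate, so the interior-perturbation argument is only needed in the full-dimensional setting.
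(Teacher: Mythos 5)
Your proof is correct and follows essentially the same route as the paper: compactness (Proposition \ref{pr:2.11}) plus continuity of $z\mapsto Re(c^*z)$ for (1), an interior-perturbation contradiction for (2) --- the paper uses the explicit decreasing direction $\hat z := z^0-\frac{\varepsilon}{2}\frac{c}{\|c\|^2}$, a concrete instance of your direction $d$ with $Re(c^*d)<0$ --- and the optimality inequality $Re(c^*z)\geq a$ for all $z\in S$ for (3). Your explicit treatment of the degenerate case $c=0$ is in fact slightly more careful than the paper's proof, which divides by $\|c\|^2$ and thereby silently assumes $c\neq 0$.
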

\begin{proof}
\begin{enumerate}
    \item By Proposition \ref{pr:2.11} $S$ is compact. Hence $(I)$ has an optimal feasible solution.
    \item Since $S$ is closed, it suffices to show that $z^0\not\in S^{\mathrm{o}}$. Suppose that $z^0\in S^{\mathrm{o}}$. Then $\exists\varepsilon>0:$ $B(z^0,\varepsilon)\subset S$. In fact, one can take $\varepsilon>0$ such that $\hat{z}:=z^0-\displaystyle\frac{\varepsilon}{2}\displaystyle\frac{c}{(\|c\|)^2}\in B(z^0,\varepsilon)\Rightarrow \hat{z}\in S$. Then
    \begin{equation*}
        \hat{z}=z^0-\frac{\varepsilon}{2}\frac{c}{(\|c\|)^2}\Rightarrow c^*\hat{z}=c^*z^0-\frac{\varepsilon}{2}\cdot1\Rightarrow Re(c^*\hat{z})=Re(c^*z^0)-\frac{\varepsilon}{2}<Re(c^*z^0)
    \end{equation*}
    which is a contradiction since $z^0$ is optimal feasible.
    \item Let $a:=Re(c^*z^0)$ and $H(c,a)$ the hyperplane that corresponds to $z^0$. Then it is obvious that $Re(c^*z)\geq a$ $\forall z\in S$ since $z^0$ is optimal feasible, hence by (2) $H(c,a)$ supports $S$.
\end{enumerate}
\end{proof}
We now obtain the following important result:
\begin{corollary}\label{cor:pi}
For a given convex polytope $S$ consider the problem $(I)$ as defined in the previous theorem. Then there exists at least one $z^0\in\mathcal{A(S)}:$ $z^0$ is an optimal feasible solution of $(I)$. 
\end{corollary}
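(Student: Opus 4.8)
The plan is to derive the corollary by chaining together the two results just proved: Theorem \ref{th:3.12}, which guarantees that $(I)$ is solved on a hyperplane supporting $S$, and Theorem \ref{th:2.16}, which guarantees that any supporting hyperplane of a convex polytope contains an extreme point. Since every extreme point of $S$ lies in $S$ and is by definition a member of $\mathcal{A}(S)$, producing a minimizer that happens to be an extreme point amounts to showing that the minimum value is attained somewhere on $\mathcal{A}(S)$.

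Concretely, I would first apply Theorem \ref{th:3.12}(1) to obtain an optimal feasible solution $z^*$ of $(I)$, and set $a := Re(c^*z^*)$, so that $a$ is the optimal (minimum) value of the objective and $Re(c^*z) \geq a$ for all $z \in S$. By Theorem \ref{th:3.12}(3) the hyperplane $H(c,a) = H(c, Re(c^*z^*))$ supports $S$. I would then invoke Theorem \ref{th:2.16} on this supporting hyperplane to produce an extreme point $z^0 \in \mathcal{A}(S)$ with $z^0 \in H(c,a)$.

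It then remains only to check that this $z^0$ is itself optimal. By the definition of $H(c,a)$, the membership $z^0 \in H(c,a)$ gives exactly $Re(c^*z^0) = a$; since $z^0 \in S$ is feasible and its objective value equals the optimal value $a$, the point $z^0$ is an optimal feasible solution of $(I)$, and it is an extreme point, which completes the argument.

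I do not expect a genuine obstacle here, as the statement is a direct corollary of the preceding two theorems; the only point requiring care is the identification of the constant $a$ defining the supporting hyperplane with the minimum value of the objective, so that lying on $H(c,a)$ is equivalent to being a minimizer. This identification is precisely what parts (1) and (3) of Theorem \ref{th:3.12} supply.
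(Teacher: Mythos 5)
Your proposal is correct and takes essentially the same route as the paper's own proof: obtain an optimal solution via Theorem \ref{th:3.12}(1), use part (3) to see that the hyperplane $H(c,Re(c^*z^*))$ supports $S$, and then apply Theorem \ref{th:2.16} to produce an extreme point on that hyperplane. The only difference is that you spell out the final (and correct) verification that any extreme point lying on $H(c,a)$ attains the optimal value $a$ and is hence optimal, a step the paper's one-line proof leaves implicit.
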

\begin{proof}
Let $z^0$ be an optimal solution of $(I)$ and $H(c,a):=H(c,Re(c,z^0))$ be the corresponding supporting hyperplane of $S$. Then by Theorems \ref{th:2.16} and \ref{th:3.12} \;$\exists z^0\in H(c,a)\cap\mathcal{A(S)}$.
\end{proof}
\begin{remark}
Theorem \ref{th:3.12} and Corollary \ref{cor:pi} also hold for the problem:
\begin{center}
\begin{equation*}{(II)\;\;\;\;\;\;}
\begin{array}{ll}
    max\;\; Re(c^*z)\\
    s.t.\;\; z\in S
    \end{array}
    \end{equation*}
    \end{center}
\end{remark}
\begin{definition}(Complex Linear Complementarity Problem)\\
Let $M\in\mathbb{C}^{p\times p}$, $q\in\mathbb{C}^{p}$, $\gamma\in\mathbb{R}^{p}$ such that $0\leqq\gamma\leqq\frac{\pi}{2}e$. The complex linear complementarity problem (complex LCP) is stated as:
Find $x\in\mathbb{C}^{p}$ and $y=q+Mx\in\mathbb{C}^{p}$ such that
\begin{equation}
    \lvert arg (x) \rvert \leqq\gamma,\;\;\lvert arg(y)\rvert \leqq\frac{\pi}{2}e-\gamma 
\end{equation}
and 
\begin{equation}
    Re(x^*y)=0\\
\end{equation}

\end{definition}
An existance theory and a solution method for the complex LCP were first studied by C.J. McCallum Jr. \cite{13,14}.

\[\]
\section{Strategy Sets}
We deal with complex matrix games where players have strategy sets of the form $S_\alpha^m:=\{z\in\mathbb{C}^m:$ $|argz|\leqq\alpha$ and $\sum_{i=1}^{m}z_i=1\}$ for some $m\in\mathbb{N}$ and $\alpha\in(0,\frac{\pi}{2})e$. A theorem regarding the extreme points of the set $S^{m}_\alpha\subseteq\mathbb{C}^{m}$ is given. These points represent the generalized form of the extreme points of the standard simplex $\mathbb{P}^{m}$ of $\mathbb{R}^{m}$, and therefore that of probability vectors, in complex space. It is the cornestone of our geometric approach towards the extension of Game Theory to the complex space, alongside the next theorem which states that the set $S^{m}_\alpha$ is a convex polytope.
\begin{theorem}\label{th:2.28}
Let $S_\alpha^m:=\{z\in\mathbb{C}^m:$ $|argz|\leqq\alpha$ and $\displaystyle\sum_{i=1}^{m}z_i=1\}$, where $\alpha\in\mathbb{R}^m$ is of the form $\alpha=\alpha_0e$ for some $\alpha_0\in(0,\frac{\pi}{2})$ and $e=(1,1,...,1)^T$. Then $S_\alpha^m$ is convex and $\mathcal{A}(S^m_\alpha)=\{d^i:i=1,2,...,m^2\}$ where:
\begin{equation*}
    d^i=\left\{
\begin{array}{ll}
      e^i & i=1,2,...,m \\
      \eta^i & i=m+1,m+2,...,m^2\\
\end{array} 
\right. 
\end{equation*}
\[\]
where $e^i=(0,...,0,1,0,...,0)^T$ $(e^i_i=1, e^i_j=0$ $\forall j\neq i)$ and
\begin{equation*}
    \eta^{m+1}=(\frac{1}{2}+bi,\frac{1}{2}-bi,0,...,0)^T,\eta^{m+2}=(\frac{1}{2}+bi,0,\frac{1}{2}-bi,0,...,0)^T_{,}...,  \eta^{2m-1}=(\frac{1}{2}+bi,0,..,0,\frac{1}{2}-bi)^T
\end{equation*}
\begin{equation*}
    \eta^{2m}=(\frac{1}{2}-bi,\frac{1}{2}+bi,0,...,0)^T_{,}\eta^{2m+1}=(0,\frac{1}{2}+bi,\frac{1}{2}-bi,0,...,0)^T_{,}...,\eta^{3m-2}=(0,\frac{1}{2}+bi,0,...,0,\frac{1}{2}-bi)^T
\end{equation*}
\begin{equation*}
    \vdots
\end{equation*}
\begin{equation*}
    \eta^{(m-1)m+2}=(\frac{1}{2}-bi,0,...,0,\frac{1}{2}+bi)^T,\;\eta^{(m-1)m+3}=(0,\frac{1}{2}-bi,0,...,0,\frac{1}{2}+bi)^T,\;...\;,\;\eta^{m^2}=
\end{equation*}
$=(0,...,0,\displaystyle\frac{1}{2}-bi,\displaystyle\frac{1}{2}+bi)^T$, for $\frac{1}{2}\pm{bi}\in\mathbb{C}$ such that $arg(\frac{1}{2}+bi)=\alpha_0$ and $arg(\frac{1}{2}-bi)=-\alpha_0$\\
\begin{proof}
Let $w,h\in S^m_\alpha$ and $\lambda\in(0,1)$. Then $|arg\lambda w|\leqq\alpha$ and $|arg(1-\lambda)h|\leqq\alpha$ $\Rightarrow$ $|arg[\lambda w+(1-\lambda)h]|\leqq\alpha$ (since $a\in(0,\frac{\pi}{2})$). We also have
\begin{equation*}
    \displaystyle\sum_{i=1}^{m}[\lambda w_i+(1-\lambda)h_i]=\lambda\displaystyle\sum_{i=1}^{m}w_i+(1-\lambda)\displaystyle\sum_{i=1}^{m}h_i=\lambda+(1-\lambda)=1
\end{equation*}
and therefore $\lambda w+(1-\lambda)h\in S^m_\alpha$
\[\]
Regarding the extreme points of $S^m_\alpha$ one can easily see that all $e^i=(0,...,0,1,0,...,0)^T$ $(e^i_i=1, e^i_j=0$ $\forall j\neq i)$ are extreme points: similarly to the real case, $\nexists z,w\in S^{m}_\alpha$ and $\nexists\lambda\in(0,1)$ such that $(0,...,0,1,0,...,0)=\lambda z+(1-\lambda)w$.\\
Note that every vector of the form $(x_1,x_2,...,x_m)\not=e^k$ in $S^{m}_\alpha$ where $x_1,x_2,...,x_m\in[0,1]$ is not an extreme point, since it can be written as a convex combination of the extreme points $e^1,e^2,...,e^m$.\\ We now prove the following necessary condition:\\ \\
Proposition 1: Let $z\neq e^k$ $\forall k$ be an extreme point of $S^m_\alpha$. Then for every $z_i$ such that $Re(z_i)\in(0,1)$ the following necessarily holds: $|arg(z_i)|=\alpha_0$\\ \\
By the above remark regarding points with coordinates that have an imaginary part equal to zero, we prove the proposition for vectors that have at least two (since $\displaystyle\sum_{i=1}^{m}z_i=1)$ coordinates with a non-zero imaginary part. We show that if $\exists z_i$ with $Re(z_i)\in(0,1)$ such that $|arg(z_i)|<\alpha_0$, then $z$ is not an extreme point. We have the following cases:
\begin{enumerate}
    \item Assume that there is exactly one coordinate $z_i$ of $z$ such that $Re(z_i)\in(0,1)$ and $|arg(z_i)|<\alpha_0$. Let $z_k=a+bi$ be this coordinate for some $k$. We may assume (without loss of generality) that $b\geq0$ (the case $b\leq0$ is similarly proved). We take the following subcases:
    \begin{enumerate}
        \item Assume that there exists $z_j=c+di$ with $c\in(0,1)$, $d>0$ for some $j\neq k$. Then $arg(z_j)=\alpha_0$. One can find $x_0,y_0>0$ ($x_0,y_0<<a,b,c,d$, assuming $b>0$) such that $\displaystyle\frac{d}{c}=\displaystyle\frac{y_0}{x_0}$ and $|arg[(a\pm x_0)+(b\pm y_0)i]|\leq\alpha_0$: Since $z_k\not\in\partial S^m_\alpha$ one may simply take the open ball $B(z_k,\varepsilon)$ centered at $z_k$ with (very small) radius $\varepsilon>0$ ($\varepsilon<<a,b,c,d$, if $b>0$) such that $\forall w\in B(z_k,\varepsilon):$ $0\leq |argw|\leq\alpha_0$. Then, if $c\leq d$ let $y_0:=\varepsilon/2^{k+2}$ and $x_0:=\displaystyle\frac{y_0c}{d}$, while if $c\geq d$ let $x_0:=\varepsilon/2^{k+2}$ and $y_0:=\displaystyle\frac{dx_0}{c}$. Now notice that $\displaystyle\frac{d-y_0}{c-x_0}=\displaystyle\frac{d}{c}=\displaystyle\frac{d+y_0}{c+x_0}$, hence $arg[(c+x_0)+(d+y_0)i]=arg[(c-x_0)+(d-y_0)i]=\alpha_0$. Let $w,h\in\mathbb{C}^m$ such that $w_i=h_i=z_i$ $\forall i\neq k,j$, $w_k=(a-x_0)+(b-y_0)i$, $w_j=(c+x_0)+(d+y_0)i$ and $h_k=(a+x_0)+(b+y_0)i$, $h_j=(c-x_0)+(d-y_0)i$. From the above one can see that $w,h\in S^m_\alpha$ and $z=\frac{1}{2}w+\frac{1}{2}h$. Therefore $z$ is not an extreme point of $S^m_\alpha$.
        \item Assume that $\forall i\neq k:$ $Re(z_i)>0\Rightarrow Im(z_i)\leq0$. Let $z_j=c+di$ with $c\in(0,1)$, $d<0$ and $arg(z_j)=-\alpha_0$. Then, exactly as shown above, we can find $x_0,y_0>0$ such that $\displaystyle\frac{d}{c}=-\frac{y_0}{x_0}$ and $|arg[(a\pm x_0)+(b\mp y_0)i]|\leq\alpha_0$. Let $w,h\in\mathbb{C}^m$ such that $w_i=h_i=z_i$ $\forall i\neq k,j$, $w_k=(a+x_0)+(b-y_0)i$, $w_j=(c-x_0)+(d+y_0)i$ and $h_k=(a-x_0)+(b+y_0)i$, $h_j=(c+x_0)+(d-y_0)i$. Then $w,h\in S^m_\alpha$ and $z=\frac{1}{2}w+\frac{1}{2}h$. Therefore $z$ is not an extreme point of $S^m_\alpha$.
    \end{enumerate}
    \item Assume that the number of coordinates  $z_i$ of $z$ with $Re(z_i)\in(0,1)$, $|arg(z_i)|<\alpha_0$ is at least $2$. Let $z_i,z_j$ be two of those coordinates of $z$, $z_i:=a+bi$, $z_j:=c+di$
    \begin{enumerate}
        \item Assume that both $z_i,z_j$ have an imaginary part of the same sign or zero. We can assume that $Im(z_i),Im(z_j)\geq0$ (the other case is similarly proved). Then since $arg(z_i),arg(z_j)<\alpha_0$ we can find $x_0,y_0>0$ ($x_0,y_0<<a,b,c,d$, assuming $b,d>0$) similarly as above such that $|arg[(a\pm x_0)+(b\pm y_0)i]|,|arg[(c\pm x_0)+(d\pm y_0)i]|\leq\alpha_0$ (let $y_0$, $x_0$ be equal to $1/2^{i+j+2}$ times the minimum of the (very small) radiuses of the open balls centered at $z_i$ and $z_j$ respectively). Let $w,h\in\mathbb{C}^m$ such that $w_k=h_k=z_k$ $\forall k\neq i,j$, $w_i=(a-x_0)+(b-y_0)i$, $w_j=(c+x_0)+(d+y_0)i$ and $h_i=(a+x_0)+(b+y_0)i$, $h_j=(c-x_0)+(d-y_0)i$. Then $w,h\in S^m_\alpha$ and $z=\frac{1}{2}w+\frac{1}{2}h$. Therefore $z$ is not an extreme point of $S^m_\alpha$.
        \item Assume that $z_i,z_j$ have an imaginary part of different sign or zero. We can assume that $Im(z_i)\geq0$ and $Im(z_j)\leq0$ (the other case is similarly proved). Then since $0\leq arg(z_i)<\alpha_0$ and $0\geq arg(z_j)>-\alpha_0$ one can find $x_0,y_0>0$ ($x_0,y_0<<a,b,c,-d$, for $b,-d>0$) as above such that $|arg[(a\pm x_0)+(b\mp y_0)i]|,|arg[(c\pm x_0)+(d\mp y_0)i]|\leq\alpha_0$ Now let $w,h\in\mathbb{C}^m$ such that $w_k=h_k=z_k$ $\forall k\neq i,j$, $w_i=(a+x_0)+(b-y_0)i$, $w_j=(c-x_0)+(d+y_0)i$ and $h_i=(a-x_0)+(b+y_0)i$, $h_j=(c+x_0)+(d-y_0)i$. Then $w,h\in S^m_\alpha$ and $z=\frac{1}{2}w+\frac{1}{2}h$. Therefore $z$ is not an extreme point of $S^m_\alpha$.
    \end{enumerate}
\end{enumerate}
We now prove a second proposition:\\ \\
Proposition 2: Let $z\neq e^k$ $\forall k$ be an extreme point of $S^m_\alpha$. Then there is no pair of coordinates of $z$ with a nonzero imaginary part of the same sign.\\ \\
Suppose that there exist $z_i:=a+bi$ and $z_j:=c+di$ with $b,d>0$ (the case $b,d<0$ is similarly proved). By Proposition 1 we obtain $arg(z_i)=arg(z_j)=\alpha_0$ and therefore $\displaystyle\frac{b}{a}=\displaystyle\frac{d}{c}$. One can now easily find $x_0,y_0>0$ ($x_0,y_0<<a,b,c,d)$ such that $\displaystyle\frac{y_0}{x_0}=\displaystyle\frac{b}{a}=\displaystyle\frac{d}{c}$: let $y_0:=b\varepsilon$, $x_0:=a\varepsilon$ for some $\varepsilon>0$. Let $w,h\in\mathbb{C}^m$ such that $w_k=h_k=z_k$ $\forall k\neq i,j$, $w_i=(a-x_0)+(b-y_0)i$, $w_j=(c+x_0)+(d+y_0)i$ and $h_i=(a+x_0)+(b+y_0)i$, $h_j=(c-x_0)+(d-y_0)i$. Then $w,h\in S^m_\alpha$ and $z=\frac{1}{2}w+\frac{1}{2}h$. Therefore $z$ is not an extreme point of $S^m_\alpha$, which is a contradiction.\\ \\
Note that in all the above cases where we take two complex numbers with an imaginary part of the same sign, their real part is $<\frac{1}{2}$ since $z\in S^{m}_\alpha$ ($\leq\frac{1}{2}$ in the case where the two complex numbers have an imaginary part of a different sign), and therefore the new vectors $w,h$ are well defined with the corresponding real parts being $\leq\frac{1}{2}$, and their arguments being suitable for the respective result in each case to hold.\\ \\
From the above propositions and the remark made before them we obtain the following: If $z\neq e^k$ $\forall k$ is an extreme point of $S^m_\alpha$ then it only has two nonzero coordinates that necessarily are of the form $\frac{1}{2}+bi$, $\frac{1}{2}-bi$ for $b>0:$ $arg(\frac{1}{2}+bi)=\alpha_0$ and $arg(\frac{1}{2}-bi)=-\alpha_0$.\\ \\
One can now easily show that all points $\eta^k$ for $k\in\{{m+1,...,m^2}\}$, as defined in the formulation of the theorem, are indeed extreme points of $S^m_\alpha$. For the completeness of the proof:\\ \\
Suppose $z=(0,...,0,\frac{1}{2}\pm bi,0,...,0,\frac{1}{2}\mp bi,0,...,0)^T\in S^m_\alpha$ is not an extreme point. Then there exist $z\neq w,h\in S^m_\alpha$ and $\lambda\in(0,1)$ such that $z=\lambda w+(1-\lambda)h$. Let $z_i,z_j$ be the nonzero coordinates of $z$. Since $\lambda$, $(1-\lambda)>0$ we obtain that $w_k=h_k=0$ $\forall k\neq i,j$ and 
\begin{equation}
    \lambda w_i+(1-\lambda)h_i=\frac{1}{2}\pm bi
\end{equation}  
\begin{equation}
    \lambda w_j+(1-\lambda) h_j=\frac{1}{2}\mp bi
\end{equation}
Suppose $Re(w_i)<\frac{1}{2}\Rightarrow Re(w_j)>\frac{1}{2}$, $Re(h_i)>\frac{1}{2}$, $Re(h_j)<\frac{1}{2}$ and perforce, since $w_i+w_j=1$ and $|arg(w_i)|\leq\alpha_0$, we have $|Im(w_i)|,|Im(w_j)|,|Im(h_i)|,|Im(h_j)|<b$ $\Rightarrow |\lambda Im(w_i)+(1-\lambda)Im(h_i)|<b$, which is a contradiction.\\ \\
Similarly, $Re(w_i)\not>\frac{1}{2}$, hence $Re(w_i)=\frac{1}{2}$, and similarly $Re(w_j)=Re(h_i)=Re(h_j)=\frac{1}{2}$ and $Im(w_i)=Im(h_i)=\pm b$, $Im(w_j)=Im(h_j)=\mp b$ $\Rightarrow w=h=z$.
\\ \\
Therefore $\mathcal{A}(S^m_\alpha)=\{d^i:i={1,2,...,m^2}\}$ where $d^i$ as defined in the formulation of the theorem above.
\end{proof}
\end{theorem}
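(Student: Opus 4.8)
The plan is to pass to real coordinates and treat $S_\alpha^m$ as an ordinary polytope in $\mathbb{R}^{2m}$, then read off its vertices from the standard basic-feasible-solution criterion, rather than by ad hoc perturbations. First I would write $z_k = x_k + i y_k$ and identify $\mathbb{C}^m$ with $\mathbb{R}^{2m}$. The single-coordinate constraint $|\arg z_k| \le \alpha_0$ (with the apex $z_k = 0$ included) is exactly the pair of linear inequalities $y_k \le x_k \tan\alpha_0$ and $-y_k \le x_k\tan\alpha_0$; together these force $x_k \ge 0$, and they pin $z_k = 0$ precisely when both are tight. The constraint $\sum_k z_k = 1$ becomes the two real equalities $\sum_k x_k = 1$ and $\sum_k y_k = 0$. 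Thus $S_\alpha^m$ is cut out by $2m$ linear inequalities and $2$ linear equalities; convexity is immediate (the cone $|\arg z|\le\alpha_0$ is convex, and we intersect it with an affine hyperplane), and boundedness follows since $x_k\in[0,1]$ and $|y_k|\le\tan\alpha_0$, so $S_\alpha^m$ is a genuine polytope in $\mathbb{R}^{2m}$.

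Next I would invoke the vertex criterion: a feasible point is extreme exactly when the gradients of its active inequality constraints, together with the two equality gradients, have rank $2m$ (equivalently, the active constraints determine the point uniquely). The key bookkeeping is per coordinate: if $z_k=0$ both cone inequalities are active (rank $2$ locally); if $z_k\neq0$ with $\arg z_k=\pm\alpha_0$ exactly one is active (rank $1$); and if $z_k\neq 0$ with $|\arg z_k|<\alpha_0$ neither is active (rank $0$). Writing $p,q,r$ for the numbers of coordinates of each type, feasibility gives $p+q+r=m$, and the rank bound $2p+q+2\ge 2m$ simplifies, using $2p+q=p+m-r$, to $r\le 1$.

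Splitting on $r$ then forces exactly two possibilities. When $r=1$ one gets $p=m-1$, $q=0$, i.e. a single nonzero coordinate equal to $1$, which lies interior to the cone since $\alpha_0>0$; these are the $e^k$. When $r=0$ one gets $p=m-2$, $q=2$, i.e. exactly two nonzero coordinates on boundary rays; here $\sum_k y_k=0$ rules out both rays being the same (two positive or two negative imaginary parts cannot cancel), so they sit on opposite rays, and then $x_i+x_j=1$ together with $y_i=x_i\tan\alpha_0$, $y_j=-x_j\tan\alpha_0$ and $y_i+y_j=0$ forces $x_i=x_j=\tfrac12$, hence the coordinates $\tfrac12\pm bi$ with $\arg(\tfrac12+bi)=\alpha_0$; these are the $\eta^k$. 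I would then close with the two routine verifications and the count: sufficiency (each listed point is genuinely a vertex) is precisely the uniqueness check just made, and counting gives $m$ vectors $e^k$ plus $m(m-1)$ vectors $\eta^k$ (an ordered choice of which index carries $+\alpha_0$ and which carries $-\alpha_0$), for a total of $m^2$, matching the indexing $d^1,\dots,d^{m^2}$.

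The main obstacle is genuinely the rank/independence step rather than the counting. One must confirm that the active-constraint gradients are linearly independent (not merely numerous): this works because the two per-coordinate gradients for distinct $k$ have disjoint supports, so independence reduces to the small linear system on the nonzero block, which I solve explicitly above. The two remaining delicate points are ruling out two coordinates on the same ray, which I discharge via $\sum_k y_k=0$, and handling the apex convention $z_k=0$, where $\arg$ is formally undefined but the closed cone remains polyhedral and is captured exactly by the two half-plane constraints.
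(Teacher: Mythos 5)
Your proof is correct and takes a genuinely different route from the paper. The paper argues entirely by hand: it first shows via explicit midpoint perturbations (Propositions 1 and 2 of its proof) that any extreme point other than an $e^k$ must have exactly two nonzero coordinates of the form $\tfrac{1}{2}\pm bi$ on the boundary rays, and then verifies by a direct convex-combination contradiction that these points are indeed extreme; the delicate part there is choosing the perturbation radii $x_0,y_0$ so that the perturbed vectors stay in $S^m_\alpha$. You instead realify, encode $|\arg z_k|\leq\alpha_0$ as the two half-plane inequalities $\pm y_k\leq x_k\tan\alpha_0$ (which correctly capture the apex $z_k=0$), and invoke the standard active-constraint rank characterization of vertices of an $H$-polyhedron in $\mathbb{R}^{2m}$. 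Your per-coordinate rank bookkeeping ($2$ for a zero coordinate, $1$ for a boundary-ray coordinate, $0$ for an interior one) and the resulting bound $r\leq 1$ are right, and your explicit uniqueness checks on the nonzero blocks settle both necessity and sufficiency at once, where the paper needs two separate arguments. One small patch: in the case $r=0$ the rank bound $p\geq m-2$ also admits $(p,q)=(m,0)$ and $(p,q)=(m-1,1)$, which you pass over silently; both are killed by feasibility rather than rank ($p=m$ gives $\sum_k x_k=0\neq 1$, and $q=1$ forces the unique nonzero coordinate to equal $1$, contradicting $\arg=\pm\alpha_0\neq 0$), so this is a one-line fix, not a structural gap. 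What your approach buys is significant: it replaces the paper's $\varepsilon$-perturbation bookkeeping with a mechanical criterion, and since your encoding exhibits $S^m_\alpha$ as a bounded $H$-polyhedron, the Minkowski--Weyl theorem immediately yields that $S^m_\alpha$ is the convex hull of its finitely many vertices --- i.e., you get the paper's Theorem 3.2 for free, whereas the paper proves it separately by a laborious constructive ``extraction'' algorithm. What the paper's approach buys in exchange is self-containedness: it uses no polyhedral machinery beyond the definition of an extreme point.
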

\[\]
\begin{theorem}
The set $S^m_\alpha$ for some $\alpha=\alpha_0e$, $\alpha_0\in(0,\frac{\pi}{2})$ is a convex polytope.
\begin{proof}
It is already known by Theorem \ref{th:2.28} that $S^m_\alpha$ is convex and $|\mathcal{A}(S^m_\alpha)|=m^2<\infty$. By Theorem \ref{th:2.6} it suffices to show that every $z\in S^m_\alpha$ can be written as a convex combination of the extreme points of $S^m_\alpha$.\\ \\Let $z\not=e^j\in S^m_\alpha$. Note that $\forall i\in\{1,...,m\}$ $z_i$ can be written as: $z_i=\lambda_i(\frac{1}{2}+bi)+\mu_i(\frac{1}{2}-bi)+\kappa_i\cdot1$ for some $\lambda_i,\mu_i,\kappa_i\in[0,1]$, $|arg(\frac{1}{2}\pm bi)|=\alpha_0$ (note that $\lambda_i,\kappa_i,\mu_i\not\in(1,\infty)$ since $|arg(z_i)|\leq\alpha_0$ and $\displaystyle\sum_{i=1}^{m}z_i=1)$ such that
\begin{equation*}
    \displaystyle\sum_{i=1}^{m}\lambda_i=\displaystyle\sum_{i=1}^{m}\mu_i,\;\; \displaystyle\sum_{i=1}^{m}(\frac{\lambda_i}{2}+\frac{\mu_i}{2}+\kappa_i)=\displaystyle\sum_{i=1}^{m}(\lambda_i+\kappa_i)=1
\end{equation*} 
Indeed:
\begin{enumerate}
    \item If $Im(z_i)>0$ then $\mu_i:=0$. Moreover, If $argz_i=\alpha_0$ then $\kappa_i:=0$ and $z_i=\lambda_i(\frac{1}{2}+bi)$ for some $\lambda_i\in(0,1]$, while if $argz_i<\alpha_0$ then $z_i$ can be written as $\lambda_i(\frac{1}{2}+bi)+\kappa_i$ for some $\lambda_i,\kappa_i>0$ such that $\lambda_i/2+\kappa_i<1$ (we subtract a real number $\kappa_i$ such that the complex number $z_i-\kappa_i$ has an argument of $\alpha_0$).
    \item If $Im(z_i)<0$ then $\lambda_i:=0$. Moreover, If $argz_i=-\alpha_0$ then $\kappa_i:=0$ and $z_i=\mu_i(\frac{1}{2}-bi)$ for some $\mu_i\in(0,1]$, while if $argz_i>-\alpha_0$ then $z_i$ can be written as $\mu_i(\frac{1}{2}-bi)+\kappa_i$ for some $\mu_i,\kappa_i>0$ such that $\mu_i/2+\kappa_i<1$.
    \item If $Im(z_i)=0$ then $z_i=\kappa_i\in[0,1)$ and $\lambda_i,\mu_i:=0$.
\end{enumerate}
Following the above, $z$ can be written as:
\begin{equation*}
    (\lambda_1(\frac{1}{2}+bi)+\mu_1(\frac{1}{2}-bi),\lambda_2(\frac{1}{2}+bi)+\mu_2(\frac{1}{2}-bi),...\;,\lambda_m(\frac{1}{2}+bi)+\mu_m(\frac{1}{2}-bi))^T+\kappa_1e^1+\kappa_2e^2+...+\kappa_me^m
\end{equation*}
It suffices to show that the vector $w:=(\lambda_1(\frac{1}{2}+bi)+\mu_1(\frac{1}{2}-bi),\lambda_2(\frac{1}{2}+bi)+\mu_2(\frac{1}{2}-bi),...\;,\lambda_m(\frac{1}{2}+bi)+\mu_m(\frac{1}{2}-bi))^T$ can be written as $\displaystyle\sum_{i=1}^{\nu}c_i\eta^{j_i}$ for some $j$ with index $1\leq\nu\leq m^2$ and $c_i\in(0,1)$ $\forall i$ such that $\displaystyle\sum_{i=1}^{\nu}c_i=\displaystyle\sum_{i=1}^{m}\lambda_i$. Note that in $w$ each coordinate has $\lambda_i=0$ or $\mu_i=0$ or both.\\ \\
Now assume that the number of coordinates of $w$ with positive imaginary part is $k$, the number of those with negative imaginary part is $l$ and the number of those with imaginary part equal to zero is $n$. Obviously $k+l+n=m$. Let us assume that $k,l>1$ (the trivial case represents a trivial extreme point $e^j$). Consider the sets
\begin{equation*}
    A_1:=\{p_1,p_2,...\;,p_k\},\;\;B_1:=\{q_1,q_2,...\;,q_l\}
\end{equation*}
where $p_1,p_2,...,p_k$ represent the coefficients $\lambda_i$ of the coordinates of $w$ with positive imaginary parts, so that (without loss of generality) $p_1$ is the coefficient $\lambda_{i_1}$ for some $i_1$ of the first coordinate of $w$ (counting from the left) that appears with positive imaginary part, $p_2$ is the coefficient of the second coordinate that appears with positive imaginary part,..., $p_k$ is the coefficient of the $k-$coordinate of $w$ that appears with positive imaginary part. Similarly, $q_1,q_2,...,q_l$ represent the counting of coefficients $\mu_i$ of the coordinates of $w$ (without loss of generality) that appear with negative imaginary part. The index of each set is equal to the index of the first point that belongs to it. Also note that
\begin{equation*}
\displaystyle\sum_{i=1}^{k}p_i=\displaystyle\sum_{i=1}^{m}\lambda_i=\displaystyle\sum_{i=1}^{m}\mu_i=\displaystyle\sum_{i=1}^{l}q_i.
\end{equation*}\\
We now take the difference between the first points of each set, and we deal with three different cases:
\begin{enumerate}
    \item If $p_1-q_1>0$ then let $r_1:=p_1-q_1$. By these coefficients $p_1$ and $q_1$ we can ``extract" from $w$ the vector $(0,...,0,q_1(\frac{1}{2}\pm bi),0,...,0,q_1(\frac{1}{2}\mp bi),0,...,0)^T$, where the nonzero coordinates correspond to the coordinates of $w$ that have $p_1$ and $q_1$ as coefficients respectively. Note that
    \begin{equation*}
        (0,...,0,q_1(\frac{1}{2}\pm bi),0,...,0,q_1(\frac{1}{2}\mp bi),0,...,0)^T=c_1\eta^{j_1}
    \end{equation*}
    for some $j_1$ where $c_1=q_1$. That is, the vector $w$ can be written as a sum of two vectors $h_1$ and $c_1\eta^{j_1}$, where $h_1:=w-c_1\eta^{j_1}$. We then consider the sets 
    \begin{equation*}
        A_1':=\{r_1,p_2,...,p_k\},\;\;B_2:=\{q_2,q_3,...,q_l\}
    \end{equation*}
    \item If $p_1-q_1<0$ then let $r_1:=q_1-p_1$. By these coefficients $p_1$ and $q_1$ we can ``extract" from $w$ the vector $(0,...,0,p_1(\frac{1}{2}\mp bi),0,...,0,p_1(\frac{1}{2}\pm bi),0,...,0)^T$, where the nonzero coordinates correspond to the coordinates of $w$ that have $q_1$ and $p_1$ as coefficients respectively. Note that
    \begin{equation*}
        (0,...,0,p_1(\frac{1}{2}\mp bi),0,...,0,p_1(\frac{1}{2}\pm bi),0,...,0)^T=c_1\eta^{j_1}
    \end{equation*}
    for some $j_1$ where $c_1=p_1$. That is, the vector $w$ can be written as a sum of two vectors $h_1$ and $c_1\eta^{j_1}$, where $h_1:=w-c_1\eta^{j_1}$. We then consider the sets 
    \begin{equation*}
        A_2:=\{p_2,p_3,...,p_k\},\;\;B_1':=\{r_1,q_2,...,q_l\}
    \end{equation*}
    \item If $p_1-q_1=0$, then by these coefficients $p_1$ and $q_1$ we can ``extract" from $w$ the vector $(0,...,0,p_1(\frac{1}{2}\mp bi),0,...,0,p_1(\frac{1}{2}\pm bi),0,...,0)^T$, where the nonzero coordinates correspond to the coordinates of $w$ that have $q_1$ and $p_1$ as coefficients respectively. Note that
    \begin{equation*}
        (0,...,0,p_1(\frac{1}{2}\mp bi),0,...,0,p_1(\frac{1}{2}\pm bi),0,...,0)^T=c_1\eta^{j_1}
    \end{equation*}
    for some $j_1$ where $c_1=p_1$. That is, the vector $w$ can be written as a sum of two vectors $h_1$ and $c_1\eta^{j_1}$, where $h_1:=w-c_1\eta^{j_1}$. We then consider the sets 
    \begin{equation*}
        A_2:=\{p_2,p_3,...,p_k\},\;\;B_2:=\{q_2,q_3,...,q_l\}
    \end{equation*}
\end{enumerate}
Note that in each case the sum of the points of the new set $A$ is equal to the sum of the points of the new set $B$. Regardless of which of the three cases holds, we repeat the same process: we consider the difference of the first elements of the two new sets ($A_2$, $B_1'$, or $A_1'$, $B_2$, or $A_2$, $B_2$) and we ``extract" from the vector $h_1$ an extreme point $\eta^{j_2}$ with coefficient $c_2$ the smaller of the two elements of the sets. Since the initial sets $A_1$ and $B_1$ are finite and all coefficients belong to $(0,1)$, this process will also be finite (one can easily prove this via proof by contradiction) and will end when at least one of the new defined sets is the empty set. Observe that during this process, the two sums of the elements of the two new sets defined each time are equal, and that the two sums of the elements that have been ``eliminated" from the initial sets $A_1$, $B_1$ are also equal, due to the relation $\displaystyle\sum_{i=1}^{k}p_i=\displaystyle\sum_{j=1}^{l}q_j$. To be more specific, after the final step of the process, we will end up with exactly one of the following pairs:
\begin{enumerate}
    \item $A_\xi'=\{r_\xi,p_{\xi+1},...,p_k\}$ and $B_{l+1}=$\O,\;\;for some $\xi\leq k$, or
    \item $A_{\xi+1}=\{p_{\xi+1},p_{\xi+2},...,p_k\}$ and $B_{l+1}=$\O,\;\;for some $\xi\leq k-1$, or
    \item $A_{k+1}=$\O\;\;and $B_\xi'=\{r_\xi,q_{\xi+1},...,q_l\}$, for some $\xi\leq l$, or
    \item $A_{k+1}=$\O\;\;and $B_{\xi+1}=\{q_{\xi+1},q_{\xi+2},...,q_l\}$, for some $\xi\leq l-1$, or
    \item $A_{k+1}=$\O\;\;and $B_{l+1}=$\O\\
\end{enumerate}
We can now see by the above observation that only the last case holds, and therefore the vector $w$ is written as a sum $\displaystyle\sum_{i=1}^{\nu}c_i\eta^{j_i}$ for some $j$ with index $1\leq\nu\leq m^2$ and $c_i\in(0,1)$ $\forall i$ such that $\displaystyle\sum_{i=1}^{\nu}c_i=\displaystyle\sum_{i=1}^{k}p_i=\displaystyle\sum_{i=1}^{m}\lambda_i$ (the final ``non-extracted" vector $h_\nu=0e$):
Suppose that e.g. the second case holds (the other three cases are similarly proved). Then by the above observation $\displaystyle\sum_{j=1}^{l}q_j=\displaystyle\sum_{i=1}^{\xi}p_i\Rightarrow\displaystyle\sum_{i=1}^{m}\mu_i=\displaystyle\sum_{j=1}^{l}q_j<\displaystyle\sum_{i=1}^{k}p_i=\displaystyle\sum_{i=1}^{m}\lambda_i$, which is a contradiction.
\end{proof}
\end{theorem}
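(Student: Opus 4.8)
The plan is to reduce the statement to the convex-hull characterization of Theorem \ref{th:2.6}. By Theorem \ref{th:2.28} we already know that $S^m_\alpha$ is convex and that $\abs{\mathcal{A}(S^m_\alpha)}=m^2<\infty$, so to conclude that $S^m_\alpha$ is a convex polytope it remains only to verify that $S^m_\alpha=co(\mathcal{A}(S^m_\alpha))$. The inclusion $co(\mathcal{A}(S^m_\alpha))\subseteq S^m_\alpha$ is immediate from convexity, so the entire burden of the proof is the reverse inclusion: every $z\in S^m_\alpha$ must be exhibited as a convex combination of the points $d^i$. I would dispose of the trivial points $z=e^j$ at once and then fix an arbitrary $z\neq e^j$.

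The first step is a coordinatewise decomposition. Since $\abs{arg(z_i)}\leq\alpha_0$, each coordinate lies in the planar cone spanned by the three distinguished directions $1$, $\frac{1}{2}+bi$ and $\frac{1}{2}-bi$ (whose arguments are $0$ and $\pm\alpha_0$), so I can write $z_i=\lambda_i(\frac{1}{2}+bi)+\mu_i(\frac{1}{2}-bi)+\kappa_i$ with $\lambda_i,\mu_i,\kappa_i\geq0$, distinguishing the cases $Im(z_i)>0$, $Im(z_i)<0$ and $Im(z_i)=0$ so that at most one of $\lambda_i,\mu_i$ is nonzero in each coordinate. Taking real and imaginary parts of $\sum_{i=1}^{m}z_i=1$ then yields the bookkeeping identities $\sum_{i=1}^{m}\lambda_i=\sum_{i=1}^{m}\mu_i$ and $\sum_{i=1}^{m}(\lambda_i+\kappa_i)=1$, the latter being exactly the total weight I will need at the end. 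The coefficients $\kappa_i$ contribute the terms $\kappa_ie^i$ directly, and it remains to realise the residual vector $w:=z-\sum_{i=1}^{m}\kappa_ie^i$, whose coordinates equal $\lambda_i(\frac{1}{2}+bi)+\mu_i(\frac{1}{2}-bi)$, as a nonnegative combination $\sum_\nu c_\nu\eta^{j_\nu}$ with $\sum_\nu c_\nu=\sum_{i=1}^{m}\lambda_i$.

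The heart of the argument is this last assembly, which I would carry out by a greedy matching between the coordinates of $w$ with positive imaginary part (with weights $p_1,\dots,p_k$ coming from the $\lambda_i$) and those with negative imaginary part (with weights $q_1,\dots,q_l$ coming from the $\mu_i$), noting that $\sum_{i=1}^{k}p_i=\sum_{j=1}^{l}q_j$. At each step I pair the two leading coordinates and extract a single generator $\eta^{j_\nu}$ whose coefficient is the smaller of the two current leading weights, replacing the larger weight by their difference and deleting the smaller. Since there are finitely many coordinates and every step removes at least one weight, the process terminates; the crucial invariant is that after each extraction the running sums of the two remaining lists remain equal. This invariant forces the process to end with both lists empty simultaneously, excluding the four ``unbalanced'' terminal configurations. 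Combining the two pieces and using $\sum_\nu c_\nu=\sum_{i=1}^{m}\lambda_i$ together with $\sum_{i=1}^{m}(\lambda_i+\kappa_i)=1$, I would check that the total coefficient $\sum_\nu c_\nu+\sum_{i=1}^{m}\kappa_i$ equals $1$, so that $z$ is genuinely a convex combination of extreme points.

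The main obstacle is the termination-and-balance analysis of the greedy extraction: one must verify both that it halts after finitely many steps and that the equal-sums invariant rules out every terminal state except the one in which the two lists are exhausted at the same moment. Everything else---the coordinatewise cone decomposition and the final weight count---is routine once this combinatorial core is in place.
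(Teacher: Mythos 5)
Your proposal is correct and follows essentially the same route as the paper's own proof: the same reduction via Theorems \ref{th:2.28} and \ref{th:2.6}, the same coordinatewise decomposition $z_i=\lambda_i(\frac{1}{2}+bi)+\mu_i(\frac{1}{2}-bi)+\kappa_i$ with the identities $\sum\lambda_i=\sum\mu_i$ and $\sum(\lambda_i+\kappa_i)=1$, and the same greedy pairing of positive- and negative-imaginary-part weights with the equal-sums invariant ruling out all unbalanced terminal states. The only difference is presentational: the paper spells out the three extraction cases and the five possible terminal configurations explicitly, while you compress them into the invariant argument, which is exactly the right summary.
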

\[\]

\section{Complex strategies, Security levels and Nash equilibria}
In this section primary definitions of pure and mixed complex strategies and security levels are given. Theorems regarding the relation between the security level of different players in the case of two-player zero-sum complex games follow.\\\par
In this particular analysis of our theory we assume that the players have the same ``game-role" and are of the same ``type" as the ones in the known (simple) real case of decision making. That is, we assume that the condition of common knowledge is met, the soon to be defined complex games are games with complete information, and that each player is an intelligent, rational, strategic decision--maker within the context of the game that acts accordingly and independently in order to maximize their payoff function. We also assume that every player's preference relation over the set of all different forms of outcomes satisfies the von Neumann-Morgenstern axioms (see \cite{27}).   \[\]

A complex game in normal form (or strategic form) is an ordered triple $G_{\mathcal{C}}=(N,(S_i)_{i\in N},(h_i)_{i\in N})$, in which:\\\begin{itemize}
    \item $N\subseteq\mathbb{N}$ is the set of players.
    \item $S_i\subseteq\mathbb{C}^{m_i}$ is the set of complex strategies-complex vectors of player $i$, $\forall i\in N$.
    \item $h_i:\displaystyle\prod_{i\in N}S_i\rightarrow\mathbb{R}$ is the payoff (utility) function of player $i$, $\forall i\in N$.
\end{itemize}

Similarly to \cite{19,20} we deal with complex games where $N=2$ and $h_{i}=Re(\cdot,\cdot)$. Before we define pure and mixed strategies and seperate each definition from the other, we restate Theorem 3.1:\\ \\
(Theorem 3.1) \textit{
Let $S_\alpha^m:=\{z\in\mathbb{C}^m:$ $|argz|\leqq\alpha$ and $\sum_{i=1}^{m}z_i=1\}$, where $\alpha\in\mathbb{R}^m$ is of the form $\alpha=\alpha_0e$ for some $\alpha_0\in(0,\frac{\pi}{2})$ and $e=(1,1,...,1)^T$. Then $S_\alpha^m$ is convex and $\mathcal{A}(S^m_\alpha)=\{d^i:i=1,2,...,m^2\}$ where:
\begin{equation*}
    d^i=\left\{
\begin{array}{ll}
      e^i & i=1,2,...,m \\
      \eta^i & i=m+1,m+2,...,m^2\\
\end{array} 
\right. 
\end{equation*}
\[\]
where $e^i=(0,...,0,1,0,...,0)^T$ $(e^i_i=1, e^i_j=0$ $\forall j\neq i)$ and $\eta^i$ are the $m^2-m$ complex vectors defined by all possible placements of elements of the sets $\{\frac{1}{2}+bi\}$ , $\{\frac{1}{2}-bi\}$ and $\{0,0,...,0\}$ (m-2 times) over an m-vector, where $\frac{1}{2}\pm bi$ are such that $|arg(\frac{1}{2}\pm bi)|=\alpha_0$.
    }
\\

 We now give the definitions for pure and mixed complex strategies.
\begin{definition}(Pure complex strategies)
The set $T^{m_i}_{\alpha_i}:=\mathcal{A}(S^{m_i}_{\alpha_i})=\{d^k:k=1,2,...,m_i^2\}$ is the set of pure complex strategies of player $i$. The real vector $\alpha_i:=\alpha_i^0e$ for some real $0<\alpha_i^0<\frac{\pi}{2}$ is called the strategy argument of the set of pure complex strategies of player $i$. 
\end{definition}
\begin{definition}(Mixed complex strategies)
The set $S^{m_i}_{\alpha_i}=\{s_i\in\mathbb{C}^{m_i}:$ $|args_i|\leqq\alpha_i$, $\sum_{k=1}^{m_i}s_{i_k}=1\}$ is the set of mixed complex strategies of player $i$. The real vector $\alpha_i:=\alpha_i^0e$ for some real $0<\alpha_i^0<\frac{\pi}{2}$ is called the strategy argument of the set of mixed complex strategies of player $i$.\\
\end{definition}
From now on we write ``(pure/mixed) strategy" instead of ``(pure/mixed) complex strategy", for simplicity. We also write (without loss of generality) $\alpha_i$ for both the real vector and the real number in $(0,\frac{\pi}{2})$ of the strategy argument; the differentiation between the two cases will be pointed out if needed. Lastly, since we only deal with matrix games, (without loss of generality) we write $m_1:=m$, $m_2:=n$ for some $m,n\in\mathbb{N}$ for the dimensions of the strategy sets of players $I$ and $II$ respectively, and we write $\alpha_1:=\alpha$, $\alpha_2:=\beta$ for the respective strategy arguments.
\[\]

\begin{definition}\label{def:4.13}
Let $G_{\mathcal{C}}(A)$ be a two-player zero-sum complex game as defined above. Then we call maxmin value or security level of player $I$ in pure strategies the value: 
\begin{equation*}
    \underline{h}:=\max\limits_{i\in\mathcal{I}}\min\limits_{j\in\mathcal{J}}Re((d^i)^*Ad^j)
\end{equation*}
Correspondingly, we call maxmin value or security level of player $II$ in pure strategies the value:
\begin{equation*}
    \overline{h}:=\max\limits_{i\in\mathcal{I}}\min\limits_{j\in\mathcal{J}}(-Re((d^i)^*Ad^j))=\min\limits_{j\in\mathcal{J}}\max\limits_{i\in\mathcal{I}}Re((d^i)^*Ad^j)
\end{equation*}
where $\mathcal{I}:=\{1,2,...,m^2\}$ and $\mathcal{J}:=\{1,2,...,n^2\}$\\
\end{definition}

We readily obtain the following results, which are embeded in the already known real--space theory:
\begin{proposition}\label{pr:4.14}
For every two-player zero-sum complex game $G_{\mathcal{C}}(A)$ the following holds: $\underline{h}\leq\overline{h}$
\end{proposition}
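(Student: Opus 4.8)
The plan is to recognize that the asserted inequality is an instance of the general ``maxmin does not exceed minmax'' fact, which holds for \emph{any} real-valued function on a product of two finite index sets and uses no special structure of the payoff. Accordingly, I would set $f(i,j) := Re((d^i)^*Ad^j)$, so that by Definition \ref{def:4.13} we have $\underline{h} = \max_{i\in\mathcal{I}}\min_{j\in\mathcal{J}} f(i,j)$ and, by its right-hand expression, $\overline{h} = \min_{j\in\mathcal{J}}\max_{i\in\mathcal{I}} f(i,j)$. Since $\mathcal{I}$ and $\mathcal{J}$ are finite, every maximum and minimum occurring below is attained and all expressions are well defined.

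The key step is a pointwise sandwich. Fixing arbitrary $i_0\in\mathcal{I}$ and $j_0\in\mathcal{J}$, one has
\begin{equation*}
\min_{j\in\mathcal{J}} f(i_0,j) \;\leq\; f(i_0,j_0) \;\leq\; \max_{i\in\mathcal{I}} f(i,j_0),
\end{equation*}
the left inequality because $\min_j f(i_0,j)$ is a lower bound for the values $f(i_0,\cdot)$, and the right inequality because $\max_i f(i,j_0)$ is an upper bound for the values $f(\cdot,j_0)$. Dropping the middle term gives $\min_{j} f(i_0,j) \leq \max_{i} f(i,j_0)$ for every choice of $i_0$ and $j_0$.

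It then remains to eliminate the two free indices in the correct order. Fixing $j_0$ and noting that the right-hand side $\max_i f(i,j_0)$ does not depend on $i_0$, I would take the maximum over $i_0\in\mathcal{I}$ on the left to obtain $\underline{h} = \max_{i_0}\min_{j} f(i_0,j) \leq \max_{i} f(i,j_0)$. This inequality now holds for every $j_0\in\mathcal{J}$ and its left-hand side no longer depends on $j_0$, so taking the minimum over $j_0$ yields $\underline{h} \leq \min_{j_0}\max_{i} f(i,j_0) = \overline{h}$, which is the claim.

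There is no genuine obstacle here: the statement is a purely order-theoretic fact about a function of two variables and is entirely insensitive to our working over $\mathbb{C}^m$ with the payoff $Re(\cdot,\cdot)$ or to the concrete form of the pure strategies $d^i, d^j$. The only point requiring care is the quantifier bookkeeping in the last step, namely that one must maximize over $i_0$ while $j_0$ is still held fixed (so the right-hand side is left untouched) and only afterwards minimize over $j_0$; performing these two operations in the opposite order would not deliver the desired bound.
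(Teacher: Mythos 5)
Your proof is correct and is exactly the standard ``maxmin $\leq$ minmax'' argument; the paper itself states Proposition \ref{pr:4.14} without proof, listing it among results ``embeded in the already known real--space theory'', and your pointwise sandwich $\min_{j\in\mathcal{J}} f(i_0,j)\leq f(i_0,j_0)\leq\max_{i\in\mathcal{I}} f(i,j_0)$ followed by maximizing over $i_0$ first (with $j_0$ held fixed) and minimizing over $j_0$ afterwards is precisely the argument being invoked. You also made the right choice in working with the right-hand expression $\overline{h}=\min_{j\in\mathcal{J}}\max_{i\in\mathcal{I}}Re((d^i)^*Ad^j)$ from Definition \ref{def:4.13}, which is the form the paper actually uses in all subsequent arguments (note that the middle expression there, $\max_{i}\min_{j}(-Re((d^i)^*Ad^j))$, appears to carry a typo in its indices, as player $II$ chooses the column: it should read $\max_{j}\min_{i}$ for the stated chain of equalities to hold).
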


\begin{proposition}\label{pr:4.15}
Let $G_\mathcal{C}(A)$ be a two-player zero-sum complex game. Then the strategy $(d^{i_0},d^{j_0})$ is a complex Nash equilibrium in pure strategies if and only if
\begin{equation*}
    Re((d^i)^*Ad^{j_0})\leq Re((d^{i_0})^*Ad^{j_0})\leq Re((d^{i_0})^*Ad^{j})\;\;\forall i\in\mathcal{I}\;\;\forall j\in\mathcal{J}
\end{equation*}
\end{proposition}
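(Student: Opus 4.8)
The plan is to unpack the definition of a complex Nash equilibrium in pure strategies and then exploit the zero-sum structure of the payoffs. Recall that in the game $G_{\mathcal{C}}(A)$ player $I$ receives $Re((d^i)^*Ad^j)$ when the pure-strategy profile $(d^i,d^j)$ is played, while by the zero-sum assumption player $II$ receives $-Re((d^i)^*Ad^j)$. By definition, $(d^{i_0},d^{j_0})$ is a complex Nash equilibrium in pure strategies precisely when neither player can strictly increase their own payoff by a unilateral deviation, the opponent's pure strategy being held fixed. The proof will therefore amount to translating these two no-deviation conditions into inequalities and combining them.

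First I would translate the no-deviation condition for player $I$. Since player $I$ maximizes $Re((d^i)^*Ad^{j_0})$ over $i\in\mathcal{I}$ while player $II$'s strategy $d^{j_0}$ is held fixed, optimality of $d^{i_0}$ reads $Re((d^{i_0})^*Ad^{j_0})\geq Re((d^i)^*Ad^{j_0})$ for every $i\in\mathcal{I}$, which is exactly the left-hand inequality in the statement. I would then treat player $II$, where the zero-sum sign flip is the one point to handle carefully: player $II$ maximizes $-Re((d^{i_0})^*Ad^{j})$, equivalently minimizes $Re((d^{i_0})^*Ad^{j})$, over $j\in\mathcal{J}$ with $d^{i_0}$ fixed. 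Hence optimality of $d^{j_0}$ for player $II$ reads $-Re((d^{i_0})^*Ad^{j_0})\geq -Re((d^{i_0})^*Ad^{j})$ for every $j\in\mathcal{J}$, which upon multiplying by $-1$ becomes $Re((d^{i_0})^*Ad^{j_0})\leq Re((d^{i_0})^*Ad^{j})$, the right-hand inequality.

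Conjoining the two derived chains yields the displayed double inequality, establishing the forward direction. Since each translation step is in fact an equivalence (the deviation-optimality of a coordinate is logically identical to the corresponding inequality over all alternatives), the converse is obtained simply by reading the argument backwards: the two inequalities reassemble into the statements that $d^{i_0}$ is a best response to $d^{j_0}$ and that $d^{j_0}$ is a best response to $d^{i_0}$. The whole argument is a direct specialization of the classical zero-sum characterization to the real-part payoff $Re(\cdot,\cdot)$, so the only substantive point is the correct bookkeeping of the sign convention for player $II$; in particular, no geometric input from the polytope structure of $S^m_\alpha$ or from Theorem \ref{th:2.28} is required, which is why I expect the verification to be short once the zero-sum minimization for player $II$ is set up properly.
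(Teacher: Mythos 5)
Your proof is correct: the paper itself states Proposition \ref{pr:4.15} without proof, describing it as ``readily obtained'' from the known real-space theory, and your direct unpacking of the two unilateral no-deviation conditions, with the sign flip for player $II$ handled via $h_{II}=-Re(\cdot^*A\cdot)$, is exactly the standard argument being invoked. Your observation that each translation is an equivalence (giving both directions at once) and that no polytope geometry or input from Theorem \ref{th:2.28} is needed is likewise accurate.
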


\begin{corollary}\label{cor:4.16}
Let $G_\mathcal{C}(A)$ be a two-player zero-sum complex game. If $(d^{i_0},d^{j_0})$ and $(d^{i_1},d^{j_1})$ are complex Nash equilibria then $Re((d^{i_0})^*Ad^{j_0})=Re((d^{i_1})^*Ad^{j_1})$.
\end{corollary}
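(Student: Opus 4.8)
The plan is to leverage the saddle-point characterization of pure-strategy equilibria supplied by Proposition~\ref{pr:4.15}, which is the complex analogue of the classical argument that all saddle points of a zero-sum game share a common value. Abbreviate $f(i,j):=Re((d^i)^*Ad^j)$. The hypothesis that $(d^{i_0},d^{j_0})$ is a complex Nash equilibrium means, by Proposition~\ref{pr:4.15}, that
$$f(i,j_0)\leq f(i_0,j_0)\leq f(i_0,j)\qquad\forall i\in\mathcal{I},\ \forall j\in\mathcal{J},$$
and likewise the hypothesis on $(d^{i_1},d^{j_1})$ gives
$$f(i,j_1)\leq f(i_1,j_1)\leq f(i_1,j)\qquad\forall i\in\mathcal{I},\ \forall j\in\mathcal{J}.$$

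First I would specialize these universally quantified inequalities at the indices of the other equilibrium. From the first pair, taking $i=i_1$ on the left and $j=j_1$ on the right yields $f(i_1,j_0)\leq f(i_0,j_0)$ and $f(i_0,j_0)\leq f(i_0,j_1)$. From the second pair, taking $i=i_0$ on the left and $j=j_0$ on the right yields $f(i_0,j_1)\leq f(i_1,j_1)$ and $f(i_1,j_1)\leq f(i_1,j_0)$.

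Then I would assemble these four inequalities into a single cyclic chain,
$$f(i_0,j_0)\leq f(i_0,j_1)\leq f(i_1,j_1)\leq f(i_1,j_0)\leq f(i_0,j_0),$$
whose two endpoints coincide. Hence every inequality in the chain is forced to be an equality, and in particular $f(i_0,j_0)=f(i_1,j_1)$, which is exactly the claimed identity $Re((d^{i_0})^*Ad^{j_0})=Re((d^{i_1})^*Ad^{j_1})$.

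There is essentially no obstacle here: since the payoff $Re((d^i)^*Ad^j)$ is a real number for each fixed pair of pure strategies, the usual total order on $\mathbb{R}$ applies and the argument reduces verbatim to the real-valued saddle-point argument. The only thing to keep straight is the bookkeeping of which index gets substituted into which inequality so that the chain closes up; the complex nature of the strategies $d^i$ plays no role beyond the already-fixed fact that the payoff takes real values.
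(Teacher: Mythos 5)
Your proof is correct: the four specializations of Proposition~\ref{pr:4.15} close up into the cyclic chain $f(i_0,j_0)\leq f(i_0,j_1)\leq f(i_1,j_1)\leq f(i_1,j_0)\leq f(i_0,j_0)$, forcing all equalities. This is exactly the standard interchange argument the paper intends --- it states the result as an immediate corollary of Proposition~\ref{pr:4.15} and omits the proof --- so your write-up supplies precisely the missing details along the same route.
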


Now let $G_\mathcal{C}(A)$ be a two-player zero-sum complex game. If $\underline{h}=\overline{h}$, then this common value is the value of the game in pure strategies and is denoted by $v$ or $v_A$. By Corollary \ref{cor:4.16} we obtain that the value of a complex game in pure strategies (if that exists) is fixed for all complex Nash equilibria.\\

\begin{proposition}
In a two-player zero-sum complex game the security levels of players $I$ and $II$ in pure strategies are equal if and only if there exists a complex Nash equilibrium $(d^{i_0},d^{j_0})$ in pure strategies with $v=Re((d^{i_0})^*Ad^{j_0})$.
\end{proposition}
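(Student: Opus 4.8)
The plan is to prove both implications directly from the definitions of $\underline{h}$ and $\overline{h}$ in Definition \ref{def:4.13} and the characterisation of pure Nash equilibria in Proposition \ref{pr:4.15}, using Proposition \ref{pr:4.14} only at the very end to pin down the required equality. Throughout I would write $f(i,j):=Re((d^i)^*Ad^j)$ for brevity, so that $\underline{h}=\max_{i}\min_{j}f(i,j)$ and $\overline{h}=\min_{j}\max_{i}f(i,j)$, both being well-defined since $\mathcal{I},\mathcal{J}$ are finite.

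For the forward direction, I would assume $\underline{h}=\overline{h}=:v$ and choose indices $i_0,j_0$ attaining the outer optimisations, i.e.\ $\min_{j}f(i_0,j)=\underline{h}=v$ and $\max_{i}f(i,j_0)=\overline{h}=v$. The first relation gives $f(i_0,j)\ge v$ for every $j$, the second gives $f(i,j_0)\le v$ for every $i$, and evaluating both at $(i_0,j_0)$ forces $f(i_0,j_0)=v$. Substituting this value back yields $f(i,j_0)\le f(i_0,j_0)\le f(i_0,j)$ for all $i,j$, which is exactly the inequality of Proposition \ref{pr:4.15}; hence $(d^{i_0},d^{j_0})$ is a complex Nash equilibrium in pure strategies with payoff $v=Re((d^{i_0})^*Ad^{j_0})$.

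For the converse, I would start from a pure Nash equilibrium $(d^{i_0},d^{j_0})$ and feed the two halves of the inequality in Proposition \ref{pr:4.15} into the two security levels separately. The right half $f(i_0,j_0)\le f(i_0,j)$, together with the trivial bound obtained by taking $j=j_0$, gives $\min_{j}f(i_0,j)=f(i_0,j_0)$, whence $\underline{h}\ge f(i_0,j_0)$; symmetrically the left half $f(i,j_0)\le f(i_0,j_0)$ gives $\max_{i}f(i,j_0)=f(i_0,j_0)$, whence $\overline{h}\le f(i_0,j_0)$. Combining these yields $\overline{h}\le f(i_0,j_0)\le\underline{h}$.

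The only step beyond routine bookkeeping is the last one: this chain by itself delivers merely $\overline{h}\le\underline{h}$, which is the reverse of the general inequality. Invoking Proposition \ref{pr:4.14}, which guarantees $\underline{h}\le\overline{h}$ for every such game, squeezes the two security levels together, so that $\underline{h}=\overline{h}=f(i_0,j_0)$ and this common value is precisely $v=Re((d^{i_0})^*Ad^{j_0})$. I expect this squeezing argument — recognising that Proposition \ref{pr:4.14} is exactly what converts the one-sided bound into the desired equality — to be the only genuinely delicate point; everything else is the standard saddle-point manipulation of $\max\min$ and $\min\max$.
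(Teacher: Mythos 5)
Your proposal is correct and follows essentially the same route as the paper: in the forward direction you pick indices attaining the outer optimisations and sandwich $f(i_0,j_0)$ between $\underline{h}$ and $\overline{h}$ (the paper does the same via the chain $\underline{h}=f(i_0,j_1)\leq f(i_0,j_0)\leq f(i_1,j_0)=\overline{h}$, which collapses to equalities), and in the converse you derive $\overline{h}\leq f(i_0,j_0)\leq\underline{h}$ from Proposition \ref{pr:4.15} and squeeze with Proposition \ref{pr:4.14}, exactly as the paper does. No gaps; the argument is complete.
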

\begin{proof}
$``\Rightarrow"$: Let  $\underline{h}=Re((d^{i_0})^*Ad^{j_1})$ for some $i_0\in\mathcal{I}$, $j_1\in\mathcal{J}$ and $\overline{h}=Re((d^{i_1})^*Ad^{j_0})$ for some $i_1\in\mathcal{I}$, $j_0\in\mathcal{J}$. Then, by Definition \ref{def:4.13}
\begin{equation*}
    \underline{h}=Re((d^{i_0})^*Ad^{j_1})\leq Re((d^{i_0})^*Ad^{j_0})\leq Re((d^{i_1})^*Ad^{j_0})=\overline{h}
\end{equation*}
Since $\underline{h}=\overline{h}=v$, only the equalities hold and therefore by Proposition \ref{pr:4.15} $v=Re((d^{i_0})^*Ad^{j_0})$ and $(d^{i_0},d^{j_0})$ is a complex Nash equilibrium.\\
$``\Leftarrow"$: Let $(d^{i_0},d^{j_0})$ be a complex Nash equilibrium. Then by Proposition \ref{pr:4.15}
\begin{equation*}
    \min\limits_{j\in\mathcal{J}}Re((d^{i_0})^*Ad^j)=Re((d^{i_0})^*Ad^{j_0}),\;\;\max\limits_{i\in\mathcal{I}}Re((d^i)^*Ad^{j_0})=Re((d^{i_0})^*Ad^{j_0})\Rightarrow\underline{h}\geq\overline{h}
\end{equation*}
By Proposition \ref{pr:4.14}\; $\underline{h}\leq\overline{h}$ holds, hence $\underline{h}=\overline{h}=v$.\\
\end{proof}
\par One may notice that a complex Nash equilibrium in pure strategies of a two-player zero-sum complex game does not always exist. Indeed, one can simply take a real payoff matrix with no saddle points and no equilibria that consist of vectors of the form $(0,...,0,\frac{1}{2},0,...,0,\frac{1}{2},0,...,0)^T$, e.g. $A:=$ $
\begin{pmatrix}
0 & 2\\
1 & 0
\end{pmatrix}$.
Similarly to the real case, the ``gap" between the security levels $\underline{h}$ and $\overline{h}$ becomes zero, and a complex Nash equilibrium eventually exists, when we choose to work with the mixed extension of the complex game: The two-player zero-sum complex game $\widetilde{G}_{\mathcal{C}}(A)=(\{I,II\},(S^{m}_\alpha,S^{n}_\beta),(\widetilde{h}_I,\widetilde{h}_{II}))$ is the mixed extension of the two-player zero-sum complex game $G_{\mathcal{C}}(A)=(\{I,II\},(T^{m}_\alpha,T^{n}_\beta),(h_I,h_{II}))$, where the payoff functions $\widetilde{h}_I$ and $\widetilde{h}_{II}$ are defined by
\begin{equation*}
    \widetilde{h}_I(z,w):=Re(z^*Aw),\;\;\widetilde{h}_{II}(z,w):=-Re(z^*Aw)\;\;\forall z\in S^{m}_\alpha\;\;\forall w\in S^{n}_\beta
\end{equation*}
\begin{definition}
Let $\widetilde{G}_{\mathcal{C}}(A)$ be a two-player zero-sum complex game as defined above. Then we call maxmin value or security level of player $I$ in mixed strategies the value: 
\begin{equation*}
    \underline{v}:=\max\limits_{z\in S^{m}_\alpha}\min\limits_{w\in S^{n}_\beta}Re(z^*Aw)
\end{equation*}
Correspondingly, we call maxmin value or security level of player $II$ in mixed strategies the value:
\begin{equation*}
    \overline{v}:=\max\limits_{z\in S^{m}_\alpha}\min\limits_{w\in S^{n}_\beta}(-Re(z^*Aw))=\min\limits_{w\in S^{n}_\beta}\max\limits_{z\in S^{m}_\alpha}Re(z^*Aw)
\end{equation*}
The security level of each player is well defined since the function $Re(z^*Aw)$ is continuous $\forall z\in S^{m}_\alpha$ and $\forall w\in S^{n}_\beta$ (and therefore its max/min is also continuous in the same sets) and the sets $S^{m}_\alpha$, $S^{n}_\beta$ are compact (see Proposition \ref{pr:2.11}).
\end{definition}

From now on we only deal with the mixed extension of a complex game, and therefore (without loss of generality) we write $G_\mathcal{C}(A)$ instead of $\widetilde{G}_\mathcal{C}(A)$ when referring to a two-player zero-sum complex game. Correspondingly, we write $h$ instead of $\widetilde{h}$ for the payoff function. The differentiation between the pure form and the mixed extension of a two-player zero-sum complex game will be pointed out, if necessary.

\begin{proposition}\label{pr:4.25}
For every two-player zero-sum complex game $G_{\mathcal{C}}(A)$ the following holds: $\underline{v}\leq\overline{v}$
\end{proposition}

\begin{proposition}\label{pr:4.28}
Let $G_\mathcal{C}(A)$ be a two-player zero-sum complex game. Then the strategy $(z^0,w^0)$ is a complex Nash equilibrium if and only if
\begin{equation*}
    Re(z^*Aw^0)\leq Re((z^0)^*Aw^0)\leq Re((z^0)^*Aw)\;\;\forall z\in S^{m}_\alpha\;\;\forall w\in S^{n}_\beta
\end{equation*}
\end{proposition}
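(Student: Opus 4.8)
The plan is to unwind the definition of a complex Nash equilibrium, namely that neither player can strictly increase their own payoff through a unilateral deviation. Concretely, $(z^0,w^0)$ is an equilibrium exactly when $z^0$ is a best response of player $I$ against $w^0$ and $w^0$ is a best response of player $II$ against $z^0$. Since these two conditions involve disjoint deviations, they decouple and can be verified independently, and the equivalence will follow by reassembling them into a single chain of inequalities.

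First I would write out player $I$'s best-response condition. Player $I$ seeks to maximize $\widetilde{h}_I(z,w^0)=Re(z^*Aw^0)$ over $z\in S^{m}_\alpha$, so requiring that $z^0$ be optimal against $w^0$ reads $Re(z^*Aw^0)\leq Re((z^0)^*Aw^0)$ for all $z\in S^{m}_\alpha$. This is precisely the left-hand inequality of the claimed chain.

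Next I would handle player $II$, where the zero-sum sign convention requires care. Player $II$ maximizes $\widetilde{h}_{II}(z^0,w)=-Re((z^0)^*Aw)$ over $w\in S^{n}_\beta$, which is equivalent to minimizing $Re((z^0)^*Aw)$. Optimality of $w^0$ therefore becomes $-Re((z^0)^*Aw)\leq -Re((z^0)^*Aw^0)$ for all $w\in S^{n}_\beta$, that is $Re((z^0)^*Aw^0)\leq Re((z^0)^*Aw)$, which is the right-hand inequality.

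Finally, combining the two inequalities gives the full chain, establishing the forward direction; conversely, the chain immediately splits back into the two best-response conditions, giving the reverse direction. I expect no substantive obstacle here, as the statement is a direct translation of the equilibrium definition into the payoff $Re(z^*Aw)$; the only point demanding genuine attention is keeping the sign conventions straight for player $II$, since maximizing $-Re((z^0)^*Aw)$ flips the optimality condition into the minimization form that produces the right-hand inequality.
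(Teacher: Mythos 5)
Your proof is correct and is exactly the argument the paper intends: the paper states Proposition~\ref{pr:4.28} without proof, treating it as an immediate unwinding of the Nash equilibrium definition (each strategy a best response to the other), which is precisely your decomposition. Your care with the sign convention for player $II$ — maximizing $-Re((z^0)^*Aw)$ is minimizing $Re((z^0)^*Aw)$ — is the only nontrivial point, and you handle it correctly.
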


\begin{corollary}\label{cor:4.29}
Let $G_\mathcal{C}(A)$ be a two-player zero-sum complex game. If $(z^0,w^0)$ and $(z^1,w^1)$ are complex Nash equilibria then $Re((z^0)^*Aw^0)=Re((z^1)^*Aw^1)$.\\
\end{corollary}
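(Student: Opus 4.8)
The plan is to exploit the saddle--point characterisation of complex Nash equilibria furnished by Proposition \ref{pr:4.28} and simply chain the resulting inequalities. Writing out that characterisation for the equilibrium $(z^0,w^0)$ gives $Re(z^*Aw^0)\le Re((z^0)^*Aw^0)\le Re((z^0)^*Aw)$ for all $z\in S^m_\alpha$ and $w\in S^n_\beta$, and analogously for $(z^1,w^1)$. The governing idea is that each equilibrium controls one of the two ``cross'' payoffs $Re((z^0)^*Aw^1)$ and $Re((z^1)^*Aw^0)$ from one side, so by feeding the strategies of one equilibrium into the inequalities of the other I can sandwich the two equilibrium values against one another.

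Concretely, I would first substitute $w=w^1$ into the right inequality for $(z^0,w^0)$ to get $Re((z^0)^*Aw^0)\le Re((z^0)^*Aw^1)$, then put $z=z^0$ into the left inequality for $(z^1,w^1)$ to get $Re((z^0)^*Aw^1)\le Re((z^1)^*Aw^1)$, yielding $Re((z^0)^*Aw^0)\le Re((z^1)^*Aw^1)$. Running the symmetric substitutions --- $w=w^0$ in the right inequality for $(z^1,w^1)$ and $z=z^1$ in the left inequality for $(z^0,w^0)$ --- produces the reverse bound. Combining the two chains closes the loop:
\[
Re((z^0)^*Aw^0)\le Re((z^0)^*Aw^1)\le Re((z^1)^*Aw^1)\le Re((z^1)^*Aw^0)\le Re((z^0)^*Aw^0),
\]
which forces every term, and in particular the two equilibrium payoffs, to coincide.

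There is no genuine analytic obstacle here: the argument is purely order--theoretic and mirrors the classical real zero--sum proof verbatim, with $Re(z^*Aw)$ playing the role of the bilinear payoff. The only point requiring a word of care is that the four substitutions are admissible, i.e. that $z^0,z^1\in S^m_\alpha$ and $w^0,w^1\in S^n_\beta$, so that they are legitimate choices for the universally quantified $z$ and $w$ in Proposition \ref{pr:4.28}; this is immediate, since equilibrium strategies are by definition mixed strategies lying in the respective strategy sets. Thus the whole statement reduces to writing down four inequalities and observing that they form a cycle.
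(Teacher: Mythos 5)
Your proposal is correct and is exactly the argument the paper intends: the corollary is stated without proof as an immediate consequence of Proposition~\ref{pr:4.28}, and your four substitutions chained into the cyclic inequality
$Re((z^0)^*Aw^0)\le Re((z^0)^*Aw^1)\le Re((z^1)^*Aw^1)\le Re((z^1)^*Aw^0)\le Re((z^0)^*Aw^0)$
are the standard interchangeability argument that fills in that omitted proof. Nothing is missing; the admissibility of the substitutions is, as you note, immediate since $z^0,z^1\in S^m_\alpha$ and $w^0,w^1\in S^n_\beta$.
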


Let $G_\mathcal{C}(A)$ be a two-player zero-sum complex game. If $\underline{v}=\overline{v}$ then this common value is the value of the complex game in mixed strategies and is denoted by $v$ or $v_A$. By Corollary \ref{cor:4.29} we obtain that the value of a complex game in mixed strategies (if that exists) is fixed for all complex Nash equilibria.\\ \par
The following theorem is the main result of this section, as it shows that with the use of mixed strategies the players can improve the value of their security level in pure strategies. We will first need a lemma:
\begin{lemma}\label{lem:4.26}
Let $G_{\mathcal{C}}(A)$ be a two-player zero-sum complex game. Then
\begin{equation*}
    \underline{v}=\max\limits_{z\in S^{m}_\alpha}\min\limits_{j\in\mathcal{J}}Re(z^*Ad^j),\;\;\overline{v}=\min\limits_{w\in S^{n}_\beta}\max\limits_{i\in\mathcal{I}}Re((d^i)^*Aw)
\end{equation*}
\end{lemma}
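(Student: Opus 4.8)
The plan is to reduce each inner optimization over the full mixed-strategy polytope to an optimization over its finitely many extreme points, which by Theorem~\ref{th:2.28} are precisely the pure strategies $d^j$. The crucial observation is that when one player's strategy is held fixed, the payoff $Re(z^*Aw)$ becomes a real-linear functional in the remaining variable, so that the linear-programming machinery of Corollary~\ref{cor:pi} applies directly.

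First I would fix $z\in S^m_\alpha$ and set $c:=A^*z\in\mathbb{C}^n$, so that $Re(z^*Aw)=Re(c^*w)$ for every $w$, using $(A^*z)^*=z^*A$. The inner minimization $\min_{w\in S^n_\beta}Re(z^*Aw)$ is then exactly problem $(I)$ of Theorem~\ref{th:3.12} posed over $S^n_\beta$, which is a convex polytope whose extreme points, by Theorem~\ref{th:2.28}, are $\mathcal{A}(S^n_\beta)=\{d^j:j\in\mathcal{J}\}$. By Corollary~\ref{cor:pi} the minimum is attained at some extreme point, and since the extreme points lie in $S^n_\beta$ the two sides sandwich each other, giving
\[
\min_{w\in S^n_\beta}Re(z^*Aw)=\min_{j\in\mathcal{J}}Re(z^*Ad^j).
\]
This identity holds pointwise in $z$, so the two functions of $z$ being maximized coincide; applying $\max_{z\in S^m_\alpha}$ to both sides yields the first formula for $\underline{v}$.

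For $\overline{v}$ I would use the second expression $\overline{v}=\min_{w\in S^n_\beta}\max_{z\in S^m_\alpha}Re(z^*Aw)$ and argue symmetrically. Fixing $w\in S^n_\beta$ and setting $c:=Aw\in\mathbb{C}^m$, the invariance of the real part under conjugation gives $Re(z^*Aw)=Re(z^*c)=Re(c^*z)$, so the inner maximization is problem $(II)$ over the convex polytope $S^m_\alpha$ with extreme points $\{d^i:i\in\mathcal{I}\}$. The Remark following Corollary~\ref{cor:pi} then guarantees that this maximum is attained at an extreme point, so $\max_{z\in S^m_\alpha}Re(z^*Aw)=\max_{i\in\mathcal{I}}Re((d^i)^*Aw)$ pointwise in $w$; minimizing over $w$ delivers the formula for $\overline{v}$.

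The argument is essentially routine once the reduction is set up, and I do not expect a genuine obstacle: compactness of the polytopes (Proposition~\ref{pr:2.11}) already ensures all the extrema exist. The only point requiring a little care is the algebraic bookkeeping with the conjugate transpose that recasts $Re(z^*Aw)$ as a functional of the form $Re(c^*\cdot)$ in each variable separately, so that Corollary~\ref{cor:pi} and its Remark can be invoked in the correct (minimization, respectively maximization) form.
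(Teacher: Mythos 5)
Your proof is correct and takes essentially the same route as the paper's: fix one player's strategy, observe that the payoff is a (real-)linear continuous functional on the opposing convex polytope, and invoke Corollary~\ref{cor:pi} (and its Remark for the maximization case) to replace the inner optimum over $S^{n}_\beta$ (resp.\ $S^{m}_\alpha$) by the optimum over its extreme points $\{d^j\}$ (resp.\ $\{d^i\}$) before applying the outer operator. The only differences are cosmetic: you spell out the conjugate-transpose bookkeeping $Re(z^*Aw)=Re\left((A^*z)^*w\right)$ that makes Theorem~\ref{th:3.12} apply verbatim, and you write out the second equality, which the paper dismisses as ``similarly proved.''
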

\begin{proof}
We only prove the first (the second equation is similarly proved). The function $g(w):=Re(z^*Aw)$ is linear and continuous in the convex polytope $S^{n}_\beta$ $\;\forall z\in S^{m}_\alpha$, therefore by Corollary \ref{cor:pi} its minimum value (which exists) is taken on an extreme point. That is:
\begin{equation*}
    \min\limits_{w\in S^{n}_\beta}Re(z^*Aw)=\min\limits_{w\in\mathcal{A}(S^{n}_\beta)}Re(z^*Aw)=\min\limits_{j\in\mathcal{J}}Re(z^*Ad^j)\;\;\forall z\in S^{m}_\alpha
\end{equation*}
\begin{equation*}
    \Rightarrow\max\limits_{z\in S^{m}_\alpha}\min\limits_{w\in S^{n}_\beta}Re(z^*Aw)=\max\limits_{z\in S^{m}_\alpha}\min\limits_{j\in\mathcal{J}}Re(z^*Ad^j)
\end{equation*}
\end{proof}

\begin{theorem}
Let $G_{\mathcal{C}}(A)$ be a two-player zero-sum complex game. Then for the security levels of players $I$, $II$ in pure and mixed strategies the following holds:
\begin{equation*}
    \underline{h}\leq\underline{v}\leq\overline{v}\leq\overline{h}
\end{equation*}
\end{theorem}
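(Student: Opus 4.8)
The plan is to split the desired chain $\underline{h}\leq\underline{v}\leq\overline{v}\leq\overline{h}$ into its three constituent inequalities. The middle inequality $\underline{v}\leq\overline{v}$ is exactly Proposition \ref{pr:4.25}, so I need only establish the two outer bounds. For both, the decisive tool is Lemma \ref{lem:4.26}, which re-expresses each mixed security level with its inner optimization already reduced to the finite set of the opponent's pure strategies; this puts the mixed and pure quantities on a common footing and makes the comparison immediate. The one conceptual point underlying everything is that the pure strategies $d^i$ are precisely the extreme points of $S^m_\alpha$, hence $d^i\in S^m_\alpha$ (and likewise $d^j\in S^n_\beta$), so a pure strategy is a special case of a mixed one.

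First I would prove $\underline{h}\leq\underline{v}$. By Lemma \ref{lem:4.26} we have $\underline{v}=\max_{z\in S^m_\alpha}\min_{j\in\mathcal{J}}Re(z^*Ad^j)$. Fix any $i\in\mathcal{I}$; since $d^i\in S^m_\alpha$, substituting the feasible point $z=d^i$ into this maximization gives
\begin{equation*}
    \min_{j\in\mathcal{J}}Re((d^i)^*Ad^j)\leq\max_{z\in S^m_\alpha}\min_{j\in\mathcal{J}}Re(z^*Ad^j)=\underline{v}.
\end{equation*}
As the right-hand side is independent of $i$, taking the maximum over $i\in\mathcal{I}$ on the left yields $\underline{h}=\max_{i\in\mathcal{I}}\min_{j\in\mathcal{J}}Re((d^i)^*Ad^j)\leq\underline{v}$.

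Symmetrically, I would prove $\overline{v}\leq\overline{h}$. By Lemma \ref{lem:4.26} we have $\overline{v}=\min_{w\in S^n_\beta}\max_{i\in\mathcal{I}}Re((d^i)^*Aw)$. Fix any $j\in\mathcal{J}$; since $d^j\in S^n_\beta$ is feasible for this minimization, the minimum over all $w$ is at most the value at $w=d^j$, giving
\begin{equation*}
    \overline{v}=\min_{w\in S^n_\beta}\max_{i\in\mathcal{I}}Re((d^i)^*Aw)\leq\max_{i\in\mathcal{I}}Re((d^i)^*Ad^j).
\end{equation*}
Taking the minimum over $j\in\mathcal{J}$ on the right gives $\overline{v}\leq\min_{j\in\mathcal{J}}\max_{i\in\mathcal{I}}Re((d^i)^*Ad^j)=\overline{h}$. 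Chaining the three bounds $\underline{h}\leq\underline{v}$, $\underline{v}\leq\overline{v}$ (Proposition \ref{pr:4.25}) and $\overline{v}\leq\overline{h}$ completes the argument.

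I do not expect a genuine obstacle here: once Lemma \ref{lem:4.26} and Proposition \ref{pr:4.25} are available, the result is a clean exercise in restricting a feasible set. The only point requiring care is the direction of each inequality under such a restriction—passing from the full polytope to its finite vertex set can only lower a maximum and can only raise a minimum—so I must make sure I substitute a pure strategy into the outer maximum for the $\underline{h}$ bound and into the outer minimum for the $\overline{v}$ bound, and not the reverse.
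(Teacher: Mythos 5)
Your proposal is correct and follows essentially the same route as the paper: both split the chain into three inequalities, take the middle one from Proposition \ref{pr:4.25}, and obtain the outer bounds from Lemma \ref{lem:4.26} together with the observation that the pure strategies $\mathcal{A}(S^{m}_\alpha)\subset S^{m}_\alpha$ (resp.\ $\mathcal{A}(S^{n}_\beta)\subset S^{n}_\beta$) are feasible points of the mixed strategy sets, so restricting the outer optimization to them can only lower the maximum (resp.\ raise the minimum). The only cosmetic difference is that you write out the symmetric argument for $\overline{v}\leq\overline{h}$, which the paper leaves as ``proved similarly.''
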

\begin{proof}
We only prove the first inequality (the second is already known by Proposition \ref{pr:4.25} and the third is proved similarly to the first one). We have $\underline{h}=\max\limits_{i\in\mathcal{I}}\min\limits_{j\in\mathcal{J}}Re((d^i)^*Ad^j)=\max\limits_{z\in\mathcal{A}(S^{m}_\alpha)}\min\limits_{j\in\mathcal{J}}Re(z^*Ad^j)$. Now $\mathcal{A}(S^{m}_\alpha)\subset S^{m}_\alpha$ $\Rightarrow$ $\underline{h}\leq\max\limits_{z\in S^{m}_\alpha}\min\limits_{j\in\mathcal{J}}Re(z^*Ad^j)$. By the previous Lemma we obtain $\underline{h}\leq\underline{v}$
\end{proof}

\[\]
\begin{proposition}\label{th:4.33}
In a two-player zero-sum complex game the security levels of players $I$ and $II$ in mixed strategies are equal if and only if there exists a complex Nash equilibrium $(z^0,w^0)$ in mixed strategies with $v=Re((z^0)^*Aw^0)$.
\end{proposition}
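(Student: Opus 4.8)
The plan is to mirror, almost verbatim, the structure of the pure-strategy version of this proposition proved earlier, replacing every object by its mixed-strategy analogue: the pure security levels $\underline{h},\overline{h}$ become the mixed security levels $\underline{v},\overline{v}$; the extreme-point strategies $d^{i},d^{j}$ become arbitrary strategies $z\in S^{m}_\alpha$, $w\in S^{n}_\beta$; the Nash characterization of Proposition \ref{pr:4.15} is replaced by its mixed-strategy counterpart, Proposition \ref{pr:4.28}; and the inequality $\underline{h}\le\overline{h}$ of Proposition \ref{pr:4.14} is replaced by $\underline{v}\le\overline{v}$ of Proposition \ref{pr:4.25}. The one genuinely new ingredient I would invoke is that all the outer optima defining $\underline{v}$ and $\overline{v}$ are actually attained; this is already guaranteed by the continuity of $Re(z^*Aw)$ together with the compactness of $S^{m}_\alpha$ and $S^{n}_\beta$ (Proposition \ref{pr:2.11}), as noted in the definition of the mixed security levels.

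For the forward direction I would assume $\underline{v}=\overline{v}=v$ and choose a maximizer $z^0$ for $\underline{v}$, so that $\underline{v}=\min_{w\in S^{n}_\beta}Re((z^0)^*Aw)$, and a minimizer $w^0$ for $\overline{v}$, so that $\overline{v}=\max_{z\in S^{m}_\alpha}Re(z^*Aw^0)$. Sandwiching the single number $Re((z^0)^*Aw^0)$ between these gives
\begin{equation*}
\underline{v}=\min_{w\in S^{n}_\beta}Re((z^0)^*Aw)\le Re((z^0)^*Aw^0)\le\max_{z\in S^{m}_\alpha}Re(z^*Aw^0)=\overline{v}.
\end{equation*}
Since the two ends coincide, both inequalities are equalities, which says precisely that $Re(z^*Aw^0)\le Re((z^0)^*Aw^0)\le Re((z^0)^*Aw)$ for all $z\in S^{m}_\alpha$ and $w\in S^{n}_\beta$; by Proposition \ref{pr:4.28} this is exactly the condition for $(z^0,w^0)$ to be a complex Nash equilibrium, with $v=Re((z^0)^*Aw^0)$.

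For the converse I would start from a complex Nash equilibrium $(z^0,w^0)$ and read Proposition \ref{pr:4.28} in its two directions: the left inequality makes $Re((z^0)^*Aw^0)$ the maximum of $Re(z^*Aw^0)$ over $z$, forcing $\overline{v}\le Re((z^0)^*Aw^0)$, while the right inequality makes it the minimum of $Re((z^0)^*Aw)$ over $w$, forcing $\underline{v}\ge Re((z^0)^*Aw^0)$. Combining these yields $\overline{v}\le\underline{v}$, and since $\underline{v}\le\overline{v}$ always holds by Proposition \ref{pr:4.25}, equality follows with the common value equal to $Re((z^0)^*Aw^0)=v$.

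Since every step reduces to the two cited propositions together with the compactness/continuity remark, I do not expect a substantial obstacle; the only point requiring care, and the one I would flag, is the appeal to attainment of the outer optima in the forward direction, as the whole argument collapses if $z^0$ or $w^0$ were not guaranteed to exist. Once attainment is granted, the proof is a direct transcription of the pure-strategy argument.
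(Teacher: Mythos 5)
Your proposal is correct and follows essentially the same route as the paper's proof: both directions hinge on choosing attained optimizers $z^0,w^0$ (justified by Weierstrass via the compactness of $S^{m}_\alpha,S^{n}_\beta$ from Proposition \ref{pr:2.11}), sandwiching $Re((z^0)^*Aw^0)$ between $\underline{v}$ and $\overline{v}$, and invoking Propositions \ref{pr:4.28} and \ref{pr:4.25} exactly as the paper does. Your explicit flagging of the attainment issue is the same point the paper handles with its Weierstrass remark, so there is nothing to add.
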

\begin{proof}
$``\Rightarrow"$: Let $\underline{v}=\overline{v}=v$. The functions $\min\limits_{w\in S^{n}_\beta}Re(z^*Aw)$, $\max\limits_{z\in S^{m}_\alpha}Re(z^*Aw)$ are continuous in the compact sets $S^{m}_\alpha$ and $S^{n}_\beta$ respectively, therefore (Weierstrass) they have a maximum and minimum value respectively. Now let $z^0\in S^{m}_\alpha$ and $w^0\in S^{n}_\beta:$
\begin{equation*}
    \underline{v}=\max\limits_{z\in S^{m}_\alpha}\min\limits_{w\in S^{n}_\beta}Re(z^*Aw)=\min\limits_{w\in S^{n}_\beta}Re((z^0)^*Aw)
\end{equation*}
\begin{equation*}
    \overline{v}=\min\limits_{w\in S^{n}_\beta}\max\limits_{z\in S^{m}_\alpha}Re(z^*Aw)=\max\limits_{z\in S^{m}_\alpha}Re(z^*Aw^0)
\end{equation*}
Then $Re((z^0)^*Aw^0)\geq\min\limits_{w\in S^{n}_\beta}Re((z^0)^*Aw)=\underline{v}$ and $Re((z^0)^*Aw^0)\leq\max\limits_{z\in S^{m}_\alpha}Re(z^*Aw^0)=\overline{v}$ $\Rightarrow$ $Re((z^0)^*Aw^0)=\underline{v}=\overline{v}=v$ and $Re(z^*Aw^0)\leq Re((z^0)^*Aw^0)\leq Re((z^0)^*Aw)$ $\;\forall z\in S^{m}_\alpha\;\;\forall w\in S^{n}_\beta$. By Proposition \ref{pr:4.28} $(z^0,w^0)$ is a complex Nash equilibrium.\\
$``\Leftarrow"$: Let $(z^0,w^0)$ be a solution of the complex game. Then
\begin{equation*}
    Re((z^0)^*Aw^0)=\min\limits_{w\in S^{n}_\beta}Re((z^0)^*Aw)=\max\limits_{z\in S^{m}_\alpha}Re(z^*Aw^0)\Rightarrow\underline{v}\geq Re((z^0)^*Aw^0)\geq\overline{v}
\end{equation*}
Now by Proposition \ref{pr:4.25}\; $\underline{v}\leq\overline{v}$ holds, hence $\underline{v}=\overline{v}=v=Re((z^0)^*Aw^0)$.
\end{proof}

\[\]\[\]
\section{The Minimax Theorem}
\;\;\;A proof of the Minimax Theorem in complex space is originally given by Levinson N. \cite{11}, in the more general case where the strategy arguments are not of the form $\alpha e$ for some real $\alpha$. Two different and much simpler proofs are later given by Mond B. and Murray G.J. in their initial work regarding the extension of Game Theory, as seen in \cite{19,20}. In this section we give a completely different proof of the Minimax Theorem, which is constructive and indicates a way of calculating the solution of a two-player zero-sum complex game through the solution of specific complex linear programming problems, similar to the real case. The idea behind the proof follows the one of Dantzig \cite{6}. \[\]

\begin{lemma}\label{lem:5.2}
Let $G_\mathcal{C}(A)$ be a two-player zero-sum complex game. The strategy $(z^0,w^0)$ is a complex Nash equilibrium of $G_\mathcal{C}(A)$ if and only if it is a complex Nash equilibrium of $G_\mathcal{C}(B)$, for every complex payoff matrix $B:=kA+cE$, where $k>0$, $c\in\mathbb{R}$ and $E=(e_{ij})_{i=1,...,m,\;j=1,...,n}$ with $e_{ij}=1$ $\forall i,j$. Moreover, the following holds: $v_B=kv_A+c$ where $v_A$ is the complex game value of $G_\mathcal{C}(A)$ and $v_B$ is the complex game value of $G_\mathcal{C}(B)$. 
\end{lemma}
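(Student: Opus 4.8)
The plan is to reduce everything to a single elementary observation about how the all-ones matrix $E$ acts on the strategy sets, after which both the equilibrium equivalence and the value formula follow by an affine rescaling that preserves inequalities. The crucial structural fact I would exploit is the simplex-type constraint $\sum_{i}z_i=1$ built into $S^m_\alpha$ (and likewise $\sum_j w_j = 1$ for $S^n_\beta$), which is exactly what makes the perturbation by $E$ a constant shift rather than a genuine change of game.

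First I would compute $z^*Ew$ for arbitrary strategies $z\in S^m_\alpha$ and $w\in S^n_\beta$. Since $E$ has all entries equal to $1$, the double sum factors as
\begin{equation*}
    z^*Ew=\sum_{i=1}^{m}\sum_{j=1}^{n}\overline{z_i}\,w_j=\Big(\sum_{i=1}^{m}\overline{z_i}\Big)\Big(\sum_{j=1}^{n}w_j\Big).
\end{equation*}
Because $\sum_{i=1}^m z_i=1$ gives $\sum_{i=1}^m\overline{z_i}=\overline{1}=1$ and $\sum_{j=1}^n w_j=1$, I obtain $z^*Ew=1$, hence $Re(z^*Ew)=1$ for every pair of strategies. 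Substituting $B=kA+cE$ into the payoff function then yields, by linearity of $Re(\cdot)$ over real scalars,
\begin{equation*}
    Re(z^*Bw)=k\,Re(z^*Aw)+c\,Re(z^*Ew)=k\,Re(z^*Aw)+c,
\end{equation*}
so the payoff of $G_{\mathcal{C}}(B)$ is a positive affine image of the payoff of $G_{\mathcal{C}}(A)$ on the whole strategy domain.

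With this identity in hand, the equivalence is immediate from the equilibrium characterization in Proposition \ref{pr:4.28}. Applying the displayed transformation coordinatewise to the triple inequality $Re(z^*Bw^0)\le Re((z^0)^*Bw^0)\le Re((z^0)^*Bw)$ and subtracting the constant $c$ throughout, I am left with an inequality chain whose three terms are $k$ times the corresponding terms of the $A$-inequality. Since $k>0$, dividing by $k$ preserves the directions, so $(z^0,w^0)$ satisfies the $B$-equilibrium conditions if and only if it satisfies the $A$-equilibrium conditions; both implications run simultaneously, giving the ``if and only if''. Finally, evaluating the payoff identity at an equilibrium $(z^0,w^0)$ gives $v_B=Re((z^0)^*Bw^0)=k\,Re((z^0)^*Aw^0)+c=kv_A+c$, which settles the value formula.

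I do not anticipate a serious obstacle here; the only step requiring genuine insight is recognizing that $Re(z^*Ew)\equiv1$ on $S^m_\alpha\times S^n_\beta$, which is precisely where the sum-to-one constraint defining the strategy sets is used. Everything downstream is the routine fact that a positive affine map $t\mapsto kt+c$ is monotone and therefore respects the minimax and equilibrium inequalities. The one point I would state carefully is that the identity $Re(z^*Ew)=1$ holds for \emph{all} feasible strategies, not merely at equilibrium, since the equilibrium argument needs it uniformly in $z$ and $w$.
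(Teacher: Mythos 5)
Your proof is correct and follows essentially the same route as the paper's: both rest on the observation that $Re(z^*Ew)=1$ uniformly on $S^m_\alpha\times S^n_\beta$ (via the sum-to-one constraints), then transport the equilibrium inequalities of Proposition \ref{pr:4.28} through the order-preserving map $t\mapsto kt+c$ with $k>0$, and finally evaluate the payoff identity at the equilibrium to get $v_B=kv_A+c$. Your explicit remark that the identity must hold for \emph{all} feasible $z,w$, not just at equilibrium, is exactly the point the paper uses implicitly in its chain of equivalences.
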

\begin{proof}
We have $z^*Ew=1$ $\forall z\in S^{m}_\alpha$ $\forall w\in S^{n}_\beta$, since $\displaystyle\sum_{i=1}^{m}z_i=\displaystyle\sum_{j=1}^{n}w_j=1$. Now let $(z^0,w^0)$ be a solution of $G_\mathcal{C}(A)$. Then:
\begin{equation*}
    Re(z^*Aw^0)\leq Re((z^0)^*Aw^0)\leq Re((z^0)^*Aw)\;\;\forall z\in S^{m}_\alpha\;\;\forall w\in S^{n}_\beta
\end{equation*}
\begin{equation*}
    \Leftrightarrow kRe(z^*Aw^0)+c\leq kRe((z^0)^*Aw^0)+c\leq kRe((z^0)^*Aw)+c\;\;\forall z\in S^{m}_\alpha\;\;\forall w\in S^{n}_\beta
\end{equation*}
\begin{equation*}
    \Leftrightarrow kRe(z^*Aw^0)+cRe(z^*Ew^0)\leq kRe((z^0)^*Aw^0)+cRe((z^0)^*Ew^0)\leq kRe((z^0)^*Aw)+cRe((z^0)*Ew)
\end{equation*}
\begin{equation*}
    \Leftrightarrow Re(z^*(kA+cE)w^0)\leq Re((z^0)^*(kA+cE)w^0)\leq Re((z^0)^*(kA+cE)w)\;\;\forall z\in S^{m}_\alpha\;\;\forall w\in S^{n}_\beta
\end{equation*}
\begin{equation*}
    \Leftrightarrow Re(z^*Bw^0)\leq Re((z^0)^*Bw^0)\leq Re((z^0)^*Bw)\;\;\forall z\in S^{m}_\alpha\;\;\forall w\in S^{n}_\beta
\end{equation*}
if and only if $(z^0,w^0)$ is a solution of $G_\mathcal{C}(B)$ by Proposition \ref{pr:4.28}. We then have:
\begin{equation*}
    v_B=Re((z^0)^*Bw^0)=Re((z^0)^*(kA+cE)w^0)=kRe((z^0)^*Aw^0)+c=kv_A+c.
\end{equation*}
\end{proof}
The Minimax Theorem follows:
\[\]
\begin{theorem}(Minimax Theorem)
For every two-player zero-sum complex game the following holds: $\underline{v}=\overline{v}$. That is, every two-player zero-sum complex game has a Nash equilibrium in mixed complex strategies.\\
\end{theorem}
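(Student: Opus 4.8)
The plan is to realise each player's security level as the optimal value of a conic linear program over the sector cone underlying $S^m_\alpha$ (resp. $S^n_\beta$), to exhibit the two programs as a primal--dual pair, and then to invoke strong duality for conic programs; this is the complex counterpart of Dantzig's linear-programming proof. Write $K_\alpha:=\{z\in\mathbb{C}^m:|argz|\leqq\alpha\}$ for the pointed closed convex cone underlying $S^m_\alpha$, and similarly $K_\beta$. First I would dispose of the easy inequality: $\underline{v}\leq\overline{v}$ is Proposition \ref{pr:4.25}, so it suffices to prove $\overline{v}\leq\underline{v}$. Using the identity $Re(z^*Ew)=1$ from the proof of Lemma \ref{lem:5.2}, replacing $A$ by $A+cE$ shifts both $\underline{v}$ and $\overline{v}$ by the same real constant $c$ and preserves equilibria; since both levels are finite I may choose $c$ so that $\underline{v}>0$ (hence $\overline v>0$). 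Finally, Lemma \ref{lem:4.26} lets me replace the inner optimisations by optimisations over the finite pure-strategy sets, so that
\[
\underline{v}=\max_{z\in S^m_\alpha}\min_{j\in\mathcal{J}}Re(z^*Ad^j),\qquad \overline{v}=\min_{w\in S^n_\beta}\max_{i\in\mathcal{I}}Re((d^i)^*Aw).
\]

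Next I would set up player $I$'s program. Since $\underline v>0$, the Dantzig substitution $x:=z/\underline v$ is legitimate and maps $S^m_\alpha$ into $K_\alpha$. Rewriting the guarantee condition $Re(z^*Ad^j)\geq t$ in the scaled variables yields the conic linear program
\[
(P)\colon\quad \min_{x\in K_\alpha}\ \sum_{i=1}^{m}Re(x_i)\quad\text{s.t.}\quad Re(x^*Ad^j)\geq1\ \ (j\in\mathcal{J}),\ \ \sum_{i=1}^{m}x_i\in\mathbb{R},
\]
whose optimal value I claim equals $1/\underline v$: the upper bound follows by scaling an optimal strategy of player $I$, and for the reverse bound any feasible $x$ has $\sum_i x_i=t>0$ real, so $z:=x/t\in S^m_\alpha$ guarantees at least $1/t$, giving $\underline v\geq1/t$. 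I single out here a feature with no analogue in $\mathbb{R}^m$: one cannot renormalise by the (generally complex) number $\sum_i x_i$ without rotating $x$ out of $K_\alpha$, which is exactly why the real-sum constraint is carried along in $(P)$.

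I would then form the conic dual $(D)$ of $(P)$, attaching nonnegative multipliers $y_j$ to the constraints $Re(x^*Ad^j)\ge 1$ and the dual cone $K_\alpha^*$ to the membership $x\in K_\alpha$. Up to the real-sum bookkeeping the dual reads $\max\sum_j y_j$ subject to $y\geq0$ and $e-\sum_j y_j Ad^j\in K_\alpha^*$, where $e=(1,\dots,1)^T$. Renormalising $w:=\overline v\sum_j y_j d^j$ recovers a strategy $w\in S^n_\beta$ (using $\sum_k d^j_k=1$), and testing the dual membership against the extreme points $d^i\in K_\alpha$ returns precisely $Re((d^i)^*Aw)\leq\overline v$ for every $i$; thus $(D)$ is player $II$'s normalised program and its optimal value is $1/\overline v$.

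The final and decisive step is strong duality: provided $(P)$ and $(D)$ are both strictly feasible, conic-programming strong duality gives $\mathrm{val}(P)=\mathrm{val}(D)$, i.e. $1/\underline v=1/\overline v$ and hence $\underline v=\overline v$. I expect this to be the main obstacle, and I would discharge it by verifying a Slater-type condition. The sector cone $K_\alpha$ has nonempty interior because $\alpha_0<\tfrac{\pi}{2}$ (its dual $K_\alpha^*$ being the sector cone of half-angle $\tfrac{\pi}{2}-\alpha_0$, exactly the pairing that appears in the complex LCP), so the all-ones vector scaled by a large factor lies in $\mathrm{int}\,K_\alpha$, has real coordinate sum, and --- after the shift of the first step makes $Re(e^*Ad^j)>0$ --- satisfies every inequality of $(P)$ strictly; a symmetric argument gives strict feasibility of $(D)$. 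With both programs strictly feasible and bounded, no duality gap can occur and the theorem follows; by Proposition \ref{pr:4.28} the common value is attained at a complex Nash equilibrium.
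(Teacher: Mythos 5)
Your proposal is correct and follows essentially the same route as the paper's proof: the shift $A\mapsto A+cE$ (Lemma \ref{lem:5.2}) to reduce to a game of positive value, the Dantzig normalisation $x=z/\underline{v}$ carrying along the real-coordinate-sum constraint (exactly the paper's cones $\hat{\mathcal{S}},\hat{\mathcal{T}}$ in the chain $(P1)$--$(P6)$, $(D1)$--$(D6)$ culminating in $(P^*)$, $(D^*)$), and strong duality for the resulting conic primal--dual pair, concluding via Propositions \ref{pr:4.25} and \ref{th:4.33}. The only differences are presentational and, if anything, in your favour: you verify a Slater-type strict feasibility condition explicitly where the paper merely asserts feasibility of $(P^*)$ and $(D^*)$, and you attach finitely many multipliers $y_j$ to the extreme-point constraints rather than using a cone variable $w'\in\hat{\mathcal{T}}$.
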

\begin{proof}
Because of Lemma \ref{lem:5.2}, it suffices to show that $\underline{v}=\overline{v}$ for every two-player zero-sum complex game with $\underline{h}>0$. Indeed, let $G_{\mathcal{C}}(A)$ be a two-player zero-sum complex game with $Re((d^{i_0})^*Ad^{j_0})=\underline{h}<0$ for some $i_0\in\mathcal{I}$, $j_0\in\mathcal{J}$. Then there exists $c>0$ in $\mathbb{R}$ such that 
\begin{equation*}
    \max\limits_{i\in\mathcal{I}}\min\limits_{j\in\mathcal{J}}Re((d^{i})^*(A+cE)d^{j})=Re((d^{i_0})^*Ad^{j_0})+c=\underline{h}+c>0
\end{equation*}
The two-player zero-sum complex game $G_\mathcal{C}(B)$ with complex payoff matrix $B:=A+cE$ has $\underline{h}_B>0$. If $(z^0,w^0)$ is a solution of $G_\mathcal{C}(B)$ (we shall show that such solution always exists), then by Lemma \ref{lem:5.2} $(z^0,w^0)$ is also a solution of $G_\mathcal{C}(A)$.\\ \\
Therefore, let $G_\mathcal{C}(A)$ be a two-player zero-sum complex game with $\underline{h}>0$ and strategy arguments $0<\alpha:=\alpha_0e,\;\beta:=\beta_0e<\frac{\pi}{2}e$. Consider the following linear programming problem:
\begin{center}
\begin{equation*}{(P1)\;\;\;\;\;\;\;\;}
\begin{array}{ll}
    max\;\; \xi\\
    s.t.\;\;A^*z-\xi e\in (S^{n}_{\beta})^*\\
    z\in S^{m}_{\alpha} \\
    \xi\geq0
    \end{array}
    \end{equation*}
\end{center}
Then $(P1)$ is equivalent to the problem:
\begin{center}
\begin{equation*}{(P2)\;\;\;\;\;\;\;\;}
\begin{array}{ll}
    max\;\; \xi\\
    s.t.\;\;Re(A^*z,w)\geq \xi Re(e,w),\;\;\forall w\in S^{n}_\beta\\
    \lvert argz\rvert\leqq\alpha\\ 
    \displaystyle\sum_{i=1}^{m}z_i=1\\
    \xi\geq0
    \end{array}
    \end{equation*}
\end{center}

Note that the set $S^{n}_\beta$ is equivalent to the set $\mathcal{T}:=\{w\in\mathbb{C}^{n}:Re(e,w)=1,Im(e,w)=0,\lvert argw\rvert\leqq\beta\}$, therefore $(P2)$ is equivalent to the problem:
\begin{center}
\begin{equation*}{(P3)\;\;\;\;\;\;\;\;}
\begin{array}{ll}
    max\;\; \xi\\
    s.t.\;\;Re(A^*z,w)\geq \xi,\;\;\forall w\in S^{n}_\beta\\
    \lvert argz\rvert\leqq\alpha\\ 
    \displaystyle\sum_{i=1}^{m}z_i=1\\
    \xi\geq0
    \end{array}
    \end{equation*}
\end{center}

We now see that the maximum value of $\xi$ is the maximum value of the function $f(z):=\min\limits_{w\in S^{n}_{\beta}}Re(z^*Aw)$, that is $\xi=\underline{v}$ by definition.\par
We can assume that $\xi>0$: It suffices that $\underline{v}>0$
which is true by our initial hypothesis. Therefore, the problem $(P3)$ is equivalent to the problem:
\begin{center}
\begin{equation*}{(P4)\;\;\;\;\;\;\;\;}
\begin{array}{ll}
    max\;\; \xi\\
    s.t.\;Re(\displaystyle\frac{z}{\xi}^*Aw)\geq 1,\;\;\forall w\in S^{n}_\beta\\
 \lvert arg\displaystyle\frac{z}{\xi}\rvert\leqq\alpha\\ 
    \displaystyle\sum_{i=1}^{m}Re(\frac{z_i}{\xi})=\frac{1}{\xi}\\
    \displaystyle\sum_{i=1}^{m}Im(\frac{z_i}{\xi})=0\\
    \xi>0
    \end{array}
    \end{equation*}
\end{center}
which in turn is equivalent to:
\begin{center}
\begin{equation*}{(P5)\;\;\;\;\;\;\;\;}
\begin{array}{ll}
    min\;\;Re(e^*z') \\
    s.t.\;\;Re(z'^*Aw)\geq 1,\;\;\forall w\in S^{n}_{\beta}\\
     \lvert argz'\rvert\leqq\alpha\\
    \displaystyle\sum_{i=1}^{m}Im(z')=0
    \end{array}
    \end{equation*}
\end{center}
with optimal feasible solution (we show that such solution exists) $\displaystyle\frac{1}{\xi}=\displaystyle\frac{1}{\underline{v}}$, where $z':=\displaystyle\frac{z}{\xi}$.\\ 
Now consider the following problem:

\begin{center}
\begin{equation*}{(P6)\;\;\;\;\;\;\;\;}
\begin{array}{ll}
    min\;\;Re(e^*z') \\
    s.t.\;\;Re(z'^*Aw)\geq 1,\;\;\forall w\in \hat{\mathcal{T}}\\
     \lvert argz'\rvert\leqq\alpha\\
    \displaystyle\sum_{i=1}^{m}Im(z')=0
    \end{array}
    \end{equation*}
\end{center}
where $\hat{\mathcal{T}}:=\{w\in\mathbb{C}^{n}:\lvert argw\rvert\leqq\beta,\;\sum_{j=1}^{n}Im(w)=0\}\supset S^{n}_{\beta}$. If the problem $(P6)$ has optimal feasible solution, say $\theta$, then $\theta\geq\frac{1}{\underline{v}}$ (in fact they are equal). \\

Consider the (dual of $(P1)$) linear programming problem:
\begin{center}
\begin{equation*}{(D1)\;\;\;\;\;\;\;\;}
\begin{array}{ll}
    min\;\; \zeta\\
    s.t.\;\;-Aw+\zeta e \in (S^{m}_\alpha)^*\\
    w\in S^{n}_\beta\\
    \zeta\geq0
    \end{array}
    \end{equation*}
\end{center}
Then $(D1)$ is equivalent to:
\begin{center}
\begin{equation*}{(D2)\;\;\;\;\;\;\;\;}
\begin{array}{ll}
    min\;\; \zeta\\
    s.t.\;\;Re(z^*Aw)\leq \zeta Re(z,e),\;\;\forall z\in S^{m}_\alpha\\
     \lvert argw\rvert\leqq\beta\\ 
    \displaystyle\sum_{j=1}^{n}w_j=1\\
    \zeta\geq0
    \end{array}
    \end{equation*}
\end{center}
Note that the set $S^{m}_\alpha$ is equivalent to the set $\mathcal{S}:=\{z\in\mathbb{C}^{m}:Re(e,z)=1,Im(e,z)=0,\lvert argz\rvert\leqq\alpha\}$, therefore $(D2)$ is equivalent to the problem:
\begin{center}
\begin{equation*}{(D3)\;\;\;\;\;\;\;\;}
\begin{array}{ll}
    min\;\; \zeta\\
    s.t.\;\;Re(z^*Aw)\leq \zeta,\;\;\forall z\in S^{m}_\alpha\\
     \lvert argw\rvert\leqq\beta\\ 
    \displaystyle\sum_{j=1}^{n}w_j=1\\
    \zeta\geq0
    \end{array}
    \end{equation*}
\end{center}

The minimum value of $\zeta$ is the minimum of the function $g(w):=\max\limits_{z\in S^{m}_{\alpha}}Re(z^*Aw)$, hence $\zeta=\overline{v}$ by definition. Now $\underline{h}>0\Rightarrow\zeta\geq\underline{h}>0$. Therefore the problem $(D2)$ is equivalent to the problem:
\begin{center}
\begin{equation*}{(D4)\;\;\;\;\;\;\;\;}
\begin{array}{ll}
    min\;\; \zeta\\
    s.t.\;\;Re(z^*A\displaystyle\frac{w}{\zeta})\leq 1,\;\;\forall z\in S^{m}_\alpha\\
    \lvert arg\displaystyle\frac{w}{\zeta}\rvert\leqq\beta\\ 
    \displaystyle\sum_{j=1}^{n}Re(\frac{w_j}{\zeta})=\frac{1}{\zeta}\\
    \displaystyle\sum_{j=1}^{n}Im(\frac{w_j}{\zeta})=0\\
    \zeta>0
    \end{array}
    \end{equation*}
\end{center}
which in turn is equivalent to:
\begin{center}
\begin{equation*}{(D5)\;\;\;\;\;\;\;\;}
\begin{array}{ll}
    max\;\;Re(e^*w') \\
    s.t.\;\;Re(z^*Aw')\leq 1,\;\;\forall z\in S^{m}_\alpha\\
     \lvert argw'\rvert\leqq\beta\\
    \displaystyle\sum_{j=1}^{n}Im(w')=0
    \end{array}
    \end{equation*}
\end{center}
with optimal feasible solution (we show that such solution exists) $\displaystyle\frac{1}{\zeta}=\displaystyle\frac{1}{\overline{v}}$, where $w':=\displaystyle\frac{w}{\zeta}$. Now consider the following problem:
\begin{center}
\begin{equation*}{(D6)\;\;\;\;\;\;\;\;}
\begin{array}{ll}
    max\;\;Re(e^*w') \\
    s.t.\;\;Re(z^*Aw')\leq 1,\;\;\forall z\in \hat{\mathcal{S}}\\
     \lvert argw'\rvert\leqq\beta\\
    \displaystyle\sum_{j=1}^{n}Im(w')=0
    \end{array}
    \end{equation*}
\end{center}
where $\hat{\mathcal{S}}:=\{z\in\mathcal{C}^{m}:\lvert argz \rvert\leqq\alpha,\sum_{i=1}^{m}Im(z)=0\}\supset S^{m}_\alpha$. If the problem $(D6)$ has optimal feasible solution, say $\eta$, then $\eta\leq\frac{1}{\overline{v}}$ (in fact they are equal).\\ \par
Now $(P6)$ and $(D6)$ are equivalent to the problems $(P^*)$ and $(D^*)$ respectively, as these are defined by:
\begin{center}
\begin{equation*}{(P^*)\;\;\;\;\;\;\;\;}
\begin{array}{ll}
    min\;\;Re(e^*z') \\
    s.t.\;\;Az'-e \in(\hat{\mathcal{T}})^*\\
     z'\in\hat{\mathcal{S}}
    \end{array}
    \end{equation*}
\end{center}

\begin{center}
\begin{equation*}{(D^*)\;\;\;\;\;\;\;\;}
\begin{array}{ll}
    max\;\;Re(e^*w') \\
    s.t.\;\;-A^*w'+e\in(\hat{\mathcal{S}})^*\\
     w'\in\hat{\mathcal{T}}
    \end{array}
    \end{equation*}
\end{center}

\[\]
The sets $\hat{\mathcal{S}},\hat{\mathcal{T}}$ are closed convex cones and the problems $(P^*),\;(D^*)$ are dual and feasible (if such solutions are “difficult” for the reader to find, one can consider the matrix $A+cE$ with $c>0$ big enough such that all elements of $A+cE$ have a really small argument, and then find suitable $z',\;w'$ that have all imaginary parts equal to zero). By using either the known strong duality theory in conic finite--linear programming (e.g. see \cite{8, 25}) or previous related results in complex linear programming (see \cite{18}) we obtain $\eta=\theta$, and therefore $\underline{v}\geq\overline{v}$. By Proposition \ref{pr:4.25} we finally obtain $\underline{v}=\overline{v}=v_A$. Then by Proposition \ref{th:4.33} there exists a complex Nash equilibrium in mixed strategies.
\end{proof}
\[\]\[\]
\section{Domination and Elimination}
\;\;\;\;\;\;\;\;\;As in the real case, in a (two-player zero-sum) complex game there may be strategies that are weakly or strictly dominated by other strategies. Due to the more mathematically complicated nature of complex matrices however, the generic process of iterated elimination, as defined and explained in the real case, cannot be applied here. In this section we show that under a specific condition, called the ``condition of elimination", the process can be applied so that the solution of the game remains unchanged. We give two type-different definitions of elimination, which are included in a third, more generic one. This seperation is made for practical reasons and shall be explained and clarified later on through an example.
\[\]
We give the definitions of dominated strategies in the case of two-player zero-sum complex games. We begin with the first type of domination:
\begin{definition}\label{def:6.3}
Let $G_\mathcal{C}(A)$ be a two-player zero-sum complex game. A row $i_0$ of $A$ is weakly dominated if there exists a row $i_1$ of $A$ such that:
\begin{enumerate}
    \item \begin{equation*}
        Re((e^{i_0})^*Ad^j)\leq Re((e^{i_1})^*Ad^j)\;\;\forall j\in\mathcal{J}
    \end{equation*}
    \item there exists an extreme point $d^{j_0}$ for some $j_0\in\mathcal{J}$:
    \begin{equation*}
        Re((e^{i_0})^*Ad^j_0)< Re((e^{i_1})^*Ad^j_0)
    \end{equation*}
\end{enumerate}
In this case we say that row $i_0$ is weakly dominated by row $i_1$ and that row $i_1$ weakly dominates row $i_0$.\\ \\
Correspondingly, a column $j_0$ of $A$ is weakly dominated if there exists a column $j_1$ of $A$ such that:
\begin{enumerate}
    \item \begin{equation*}
        Re((d^i)^*Ae^{j_0})\geq Re((d^i)^*Ae^{j_1})\;\;\forall i\in\mathcal{I}
    \end{equation*}
    \item there exists an extreme point $d^{i_0}$ for some $i_0\in\mathcal{I}$:
    \begin{equation*}
        Re((d^{i_0})^*Ae^{j_0})> Re((d^{i_0})^*Ae^{j_1})
    \end{equation*}
\end{enumerate}
In this case we say that column $j_0$ is weakly dominated by column $j_1$ and that column $j_1$ weakly dominates column $j_0$.
\end{definition}
\begin{definition}\label{def:6.4}
Let $G_\mathcal{C}(A)$ be a two-player zero-sum complex game. A row $i_0$ of $A$ is strictly dominated if there exists a row $i_1$ of $A$ such that:
\begin{equation*}
        Re((e^{i_0})^*Ad^j)< Re((e^{i_1})^*Ad^j)\;\;\forall j\in\mathcal{J}
\end{equation*}

In this case we say that row $i_0$ is strictly dominated by row $i_1$ and that row $i_1$ strictly dominates row $i_0$.\\ \\
Correspondingly, a column $j_0$ of $A$ is strictly dominated if there exists a column $j_1$ of $A$ such that
\begin{equation*}
        Re((d^i)^*Ae^{j_0})> Re((d^i)^*Ae^{j_1})\;\;\forall i\in\mathcal{I}
\end{equation*}
In this case we say that column $j_0$ is strictly dominated by column $j_1$ and that column $j_1$ strictly dominates column $j_0$.
\end{definition}
We now give the second type of domination:
\begin{definition}\label{def:6.5}
Let $G_\mathcal{C}(A)$ be a two-player zero-sum complex game. A row $i_0$ of $A$ is weakly dominated if there exist rows $i_1,i_2$ of $A$ and $\lambda\in(0,1)$ such that:
\begin{enumerate}
    \item \begin{equation*}
        Re((e^{i_0})^*Ad^j)\leq Re((\lambda e^{i_1}+(1-\lambda)e^{i_2})^*Ad^j)\;\;\forall j\in\mathcal{J}
    \end{equation*}
    \item there exists an extreme point $d^{j_0}$ for some $j_0\in\mathcal{J}$:
    \begin{equation*}
        Re((e^{i_0})^*Ad^{j_0})< Re((\lambda e^{i_1}+(1-\lambda)e^{i_2})^*Ad^{j_0})
    \end{equation*}
\end{enumerate}
In this case we say that row $i_0$ is weakly dominated by rows $i_1,i_2$ and that rows $i_1,i_2$ weakly dominate row $i_0$.\\ \\
Correspondingly, a column $j_0$ of $A$ is weakly dominated if there exist columns $j_1,j_2$ of $A$ and $\lambda\in(0,1)$ such that:
\begin{enumerate}
    \item \begin{equation*}
        Re((d^i)^*Ae^{j_0})\geq Re((d^i)^*A(\lambda e^{j_1}+(1-\lambda)e^{j_2}))\;\;\forall i\in\mathcal{I}
    \end{equation*}
    \item there exists an extreme point $d^{i_0}$ for some $i_0\in\mathcal{I}$:
    \begin{equation*}
        Re((d^{i_0})^*Ae^{j_0})> Re((d^{i_0})^*A(\lambda e^{j_1}+(1-\lambda)e^{j_2}))
    \end{equation*}
\end{enumerate}
In this case we say that column $j_0$ is weakly dominated by columns $j_1,j_2$ and that columns $j_1,j_2$ weakly dominate column $j_0$.
\end{definition}\[\]
\begin{definition}\label{def:6.6}
Let $G_\mathcal{C}(A)$ be a two-player zero-sum complex game. A row $i_0$ of $A$ is strictly dominated if there exist rows $i_1,i_2$ of $A$ and $\lambda\in(0,1)$ such that:
\begin{equation*}
        Re((e^{i_0})^*Ad^j)< Re((\lambda e^{i_1}+(1-\lambda)e^{i_2})^*Ad^j)\;\;\forall j\in\mathcal{J}
\end{equation*}

In this case we say that row $i_0$ is strictly dominated by rows $i_1,i_2$ and that rows $i_1,i_2$ strictly dominate row $i_0$.\\ \\
Correspondingly, a column $j_0$ of $A$ is strictly dominated if there exist columns $j_1,j_2$ of $A$ and $\lambda\in(0,1)$ such that
\begin{equation*}
        Re((d^i)^*Ae^{j_0})> Re((d^i)^*A(\lambda e^{j_1}+(1-\lambda)e^{j_2}))\;\;\forall i\in\mathcal{I}
\end{equation*}
In this case we say that column $j_0$ is strictly dominated by columns $j_1,j_2$ and that columns $j_1,j_2$ strictly dominate column $j_0$.
\end{definition}
\[\]
The following definition is the general definition of domination in the case of two-player zero-sum complex games, and includes both types given above:
\begin{definition}\label{def:6.7}
Let $G_\mathcal{C}(A)$ be a two-player zero-sum complex game. A row $i_0$ of $A$ is weakly dominated if there exists a mixed strategy $z\in S^{m}_\alpha$:
\begin{enumerate}
    \item \begin{equation*}
        Re((e^{i_0})^*Ad^j)\leq Re(z^*Ad^j)\;\;\forall j\in\mathcal{J}
    \end{equation*}
    \item there exists a complex strategy $d^{j_0}$ for some $j_0\in\mathcal{J}:$
    \begin{equation*}
         Re((e^{i_0})^*Ad^{j_0})<Re(z^*Ad^{j_0})
    \end{equation*}
\end{enumerate}
In this case we say that row $i_0$ is weakly dominated by mixed strategy $z$ and that mixed strategy $z$ weakly dominates row $i_0$.\\ \\
Correspondingly, a column $j_0$ is weakly dominated if there exists a mixed strategy $w\in S^{n}_\beta$:
\begin{enumerate}
    \item \begin{equation*}
        Re((d^i)^*Ae^{j_0})\geq Re((d^i)^*Aw)\;\;\forall i\in\mathcal{I}
    \end{equation*}
    \item there exists a complex strategy $d^{i_0}$ for some $i_0\in\mathcal{I}:$
    \begin{equation*}
         Re((d^{i_0})^*Ae^{j_0})>Re((d^{i_0})^*Aw)
    \end{equation*}
\end{enumerate}
In this case we say that column $j_0$ is weakly dominated by mixed strategy $w$ and that mixed strategy $w$ weakly dominates column $j_0$.
\end{definition}
\begin{definition}\label{def:6.8}
Let $G_\mathcal{C}(A)$ be a two-player zero-sum complex game. A row $i_0$ of $A$ is strictly dominated if there exists a mixed strategy $z\in S^{m}_\alpha$:
\begin{equation*}
        Re((e^{i_0})^*Ad^j)<Re(z^*Ad^j)\;\;\forall j\in\mathcal{J}
\end{equation*}
In this case we say that row $i_0$ is strictly dominated by mixed strategy $z$ and that mixed strategy $z$ strictly dominates row $i_0$.\\ \\
Correspondingly, a column $j_0$ is strictly dominated if there exists a mixed strategy $w\in S^{n}_\beta$:
\begin{equation*}
        Re((d^i)^*Ae^{j_0})> Re((d^i)^*Aw)\;\;\forall i\in\mathcal{I}
\end{equation*}
In this case we say that column $j_0$ is strictly dominated by mixed strategy $w$ and that mixed strategy $w$ strictly dominates column $j_0$.\\
\end{definition}
\begin{remark}
If a row $i_0$ is weakly or strictly dominated by mixed strategy $z$ then $\min\limits_{j\in\mathcal{J}}Re((e^{i_0})^*Ad^j)\leq\min\limits_{j\in\mathcal{J}} Re(z^*Ad^j)\leq\underline{v}$, with the first inequality being strict for strict domination. Similarly, if a column $j_0$ is weakly or strictly dominated by mixed strategy $w$ then $\max\limits_{i\in\mathcal{I}}Re((d^i)^*Ae^{j_0})\geq\max\limits_{i\in\mathcal{I}} Re((d^i)^*Aw)\geq\overline{v}$, with the first inequality being strict for strict domination. Therefore, in both cases, a rational player chooses to play mixed strategy $z$ instead of pure strategy $e^{i_0}$ (or respectively $w$ instead of $e^{j_0}$), since it gives him a payoff ``closer" to his/her security level. Note that this preference still holds for $z$ (or $w$ respectively) being a trivial extreme point or a convex combination of trivial extreme points.
\end{remark}\[\]
\;\;\;\;\;\;\;\;\;As we already mentioned in the beginning of this section, the reason why we seperate Definitions \ref{def:6.3} and \ref{def:6.5} from the more general definition \ref{def:6.7} (and \ref{def:6.4}, \ref{def:6.6} from \ref{def:6.8} respectively), even though they are both included in it, is practical: By the process of iterated elimination, rows and columns of payoff matrix $A$ that are weakly or strictly dominated by other strategies may be eliminated, leading to a payoff matrix $A'$ with less rows and columns, and therefore making the search for the solution an easier job, as it will be seen in Section 8. The search for such dominated rows and columns through a finite number of calculations is easier and less time-consuming when done by following the first two types of domination.\par
\;\;\;\;We now prove that a game $G_\mathcal{C}(A')$ with payoff matrix $A'$ which results from elimination of rows and columns of a payoff matrix $A$ has the same solutions with the complex game $G_\mathcal{C}(A)$, as far as the ``condition of elimination" is met. We first need the following simple Lemma:\[\]
\begin{lemma}\label{lem:6.10}
If $Re((e^{i_0})^*Ad^j)\leq Re((e^{i_1})^*Ad^j)\;\;\forall j\in\mathcal{J}$ then $Re((e^{i_0})^*Aw)\leq Re((e^{i_1})^*Aw)\;\;\forall w\in S^{n}_\beta$
\end{lemma}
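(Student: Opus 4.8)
The plan is to exploit the fact that every $w \in S^{n}_\beta$ is, by Theorem 3.2 (the convex polytope theorem for $S^{n}_\beta$) together with Theorem \ref{th:2.6}, a convex combination of the extreme points of $S^{n}_\beta$, i.e.\ of the pure strategies $d^j$, $j\in\mathcal{J}$. So the hypothesis, which is an inequality ranging over all the finitely many generators $d^j$, should propagate to an inequality over the whole polytope by linearity of the map $w\mapsto Re((e^{i})^*Aw)$.

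Concretely, I would first fix an arbitrary $w\in S^{n}_\beta$ and write $w=\sum_{j=1}^{n^2}\mu_j\, d^j$ with $\mu_j\ge 0$ and $\sum_{j=1}^{n^2}\mu_j=1$, which is possible precisely because $S^{n}_\beta$ is a convex polytope generated by its extreme points $\{d^j:j\in\mathcal{J}\}$. Next I would observe that for fixed $u\in\mathbb{C}^{n}$ the real scalar function $w\mapsto Re(u^*w)$ is $\mathbb{R}$-linear, so in particular it commutes with real convex combinations; applying this with $u:=A^*e^{i_0}$ and $u:=A^*e^{i_1}$ gives
\begin{equation*}
Re((e^{i_0})^*Aw)=\sum_{j=1}^{n^2}\mu_j\,Re((e^{i_0})^*Ad^j),\qquad Re((e^{i_1})^*Aw)=\sum_{j=1}^{n^2}\mu_j\,Re((e^{i_1})^*Ad^j).
\end{equation*}
Then, using the hypothesis $Re((e^{i_0})^*Ad^j)\le Re((e^{i_1})^*Ad^j)$ for every $j\in\mathcal{J}$ and multiplying each such inequality by the nonnegative weight $\mu_j$ before summing, I obtain $Re((e^{i_0})^*Aw)\le Re((e^{i_1})^*Aw)$, which is exactly the desired conclusion since $w$ was arbitrary.

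I do not anticipate a genuine obstacle here, as the statement is precisely the expected ``inequality at the vertices implies inequality on the polytope'' for a linear functional. The only point that needs a moment of care is the justification that $w$ admits such a representation over the generators $d^j$: this is not automatic from $w\in S^{n}_\beta$ alone, but follows because Theorem 3.2 establishes that $S^{n}_\beta$ is a convex polytope with extreme-point set $\mathcal{A}(S^{n}_\beta)=\{d^j:j\in\mathcal{J}\}$, so Theorem \ref{th:2.6} guarantees every point is a convex combination of these. A secondary subtlety is that the $\mathbb{R}$-linearity being used is linearity over real scalars $\mu_j$ only (the coefficients of a convex combination are real), which is all that is required and sidesteps any issue with $Re(\cdot)$ not being $\mathbb{C}$-linear. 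With these two observations in place the argument is a one-line weighted sum of the hypothesis.
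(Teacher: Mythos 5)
Your proposal is correct and follows essentially the same route as the paper's proof: decompose $w$ as a real convex combination of the extreme points $d^j$ (justified by $S^{n}_\beta$ being a convex polytope), then use the $\mathbb{R}$-linearity of $w\mapsto Re((e^{i})^*Aw)$ to sum the vertex inequalities with nonnegative weights. The only cosmetic difference is that you sum over all $n^2$ extreme points with possibly zero coefficients, while the paper indexes only the finitely many extreme points actually appearing in the combination.
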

\begin{proof}
Let $w\in S^{n}_\beta$. Since $S^{n}_\beta$ is a convex polytope there exist $\lambda_1,\lambda_2,...,\lambda_k\in[0,1]$ and extreme points $d^{j_1},d^{j_2},...,d^{j_k}$ for some integer $k$ such that $\displaystyle\sum_{l=1}^{k}\lambda_l=1$ and $w=\displaystyle\sum_{l=1}^{k}\lambda_ld^{j_l}$. Then:
\begin{equation*}
    Re((e^{i_0})^*Aw)=Re((e^{i_0})^*A(\lambda_1d^{j_1}+...+\lambda_kd^{j_k}))=\lambda_1Re((e^{i_0})^*Ad^{j_1})+...+\lambda_kRe((e^{i_0})^*Ad^{j_k})\leq
\end{equation*}
\begin{equation*}
    \leq\lambda_1Re((e^{i_1})^*Ad^{j_1})+...+\lambda_kRe((e^{i_1})^*Ad^{j_k})=Re((e^{i_1})^*Aw)\;\;\;\;\;\;\;\;\;\;
\end{equation*}
\end{proof}
\begin{theorem}
Let $A'$ be a payoff matrix that results from the elimination of row $i_0$ of a payoff matrix $A$, where row $i_0$ is weakly dominated by row $i_1$. If $(z^0:=(z^0_1,z^0_2,...,z^0_{i_0-1},z^0_{i_0+1},...,z^0_m),w^0:=(w^0_1,...,w^0_n))$ is a complex Nash equilibrium of $G_\mathcal{C}(A')$ and if $Im((e^{i_0})^*Aw^0)=Im((e^{i_1})^*Aw^0)$ (``condition of elimination"), then $(z'^0:=(z^0_1,...,z^0_{i_0-1},0,z^0_{i_0+1},...,z^0_m),w^0)$ is a complex Nash equilibrium of $G_\mathcal{C}(A)$ and $v_{A'}=v_A$. 
\end{theorem}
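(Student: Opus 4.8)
The plan is to verify, via the saddle-point characterization of Proposition \ref{pr:4.28}, that
\[ Re(z^* A w^0) \leq Re((z'^0)^* A w^0) \leq Re((z'^0)^* A w) \qquad \forall z \in S^m_\alpha,\ \forall w \in S^n_\beta. \]
The right-hand inequality comes almost for free. Since the $i_0$-th coordinate of $z'^0$ vanishes, the $i_0$-th row of $A$ never contributes to $(z'^0)^* A w$, so $Re((z'^0)^* A w) = Re((z^0)^* A' w)$ for every $w \in S^n_\beta$. The Nash property of $(z^0,w^0)$ in $G_\mathcal{C}(A')$ gives $Re((z^0)^* A' w^0) \leq Re((z^0)^* A' w)$, which is exactly the desired inequality after translating back through this identity.

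The substance lies in the left inequality, which I would establish by a weight-shifting argument. Fix an arbitrary $z \in S^m_\alpha$ and move the mass on the dominated row onto the dominating one: set $\tilde z_{i_0} := 0$, $\tilde z_{i_1} := z_{i_1} + z_{i_0}$, and $\tilde z_i := z_i$ for $i \neq i_0, i_1$. Because $\{\zeta \in \mathbb{C} : |arg\,\zeta| \leq \alpha\} \cup \{0\}$ is a convex cone for $\alpha < \frac{\pi}{2}$ (the same closure-under-addition fact used in the convexity part of Theorem \ref{th:2.28}), the sum $z_{i_1} + z_{i_0}$ still has argument of modulus at most $\alpha$, so $\tilde z \in S^m_\alpha$. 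Since $\tilde z$ has a zero in position $i_0$, deleting that coordinate yields a point $\hat z \in S^{m-1}_\alpha$ with $Re(\tilde z^* A w^0) = Re(\hat z^* A' w^0)$.

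Writing $c_i := (e^i)^* A w^0$, a direct computation shows the payoff difference $Re(\tilde z^* A w^0) - Re(z^* A w^0)$ equals $Re(\overline{z_{i_0}}\,(c_{i_1} - c_{i_0}))$. Here the weak-domination hypothesis together with Lemma \ref{lem:6.10} gives $Re(c_{i_0}) \leq Re(c_{i_1})$, while the condition of elimination $Im((e^{i_0})^* A w^0) = Im((e^{i_1})^* A w^0)$ forces $Im(c_{i_1} - c_{i_0}) = 0$; hence $c_{i_1} - c_{i_0}$ is a nonnegative real number. As every coordinate of a point of $S^m_\alpha$ has argument of modulus at most $\alpha < \frac{\pi}{2}$, we also have $Re(z_{i_0}) \geq 0$, so $Re(\overline{z_{i_0}}\,(c_{i_1} - c_{i_0})) = (c_{i_1} - c_{i_0})\,Re(z_{i_0}) \geq 0$ and thus $Re(z^* A w^0) \leq Re(\tilde z^* A w^0)$. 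Chaining with the Nash property in the reduced game yields $Re(z^* A w^0) \leq Re(\tilde z^* A w^0) = Re(\hat z^* A' w^0) \leq Re((z^0)^* A' w^0) = Re((z'^0)^* A w^0)$, which is the left inequality.

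The delicate step, and the reason the condition of elimination is imposed, is precisely this sign control: without $Im(c_{i_0}) = Im(c_{i_1})$ the difference $c_{i_1} - c_{i_0}$ would carry an imaginary part and $Re(\overline{z_{i_0}}\,(c_{i_1} - c_{i_0}))$ could be negative, so shifting weight onto the dominating row need not help and the comparison would collapse. Once both inequalities are secured, Proposition \ref{pr:4.28} identifies $(z'^0, w^0)$ as a complex Nash equilibrium of $G_\mathcal{C}(A)$, and the value identity follows from $v_A = Re((z'^0)^* A w^0) = Re((z^0)^* A' w^0) = v_{A'}$, the middle equality again using that the $i_0$-th coordinate of $z'^0$ is zero.
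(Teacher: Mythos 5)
Your proposal is correct and follows essentially the same route as the paper's proof: the right inequality via $(z'^0)^*A = (z^0)^*A'$ and the Nash property in $G_\mathcal{C}(A')$, and the left inequality by shifting the weight $z_{i_0}$ onto the dominating row and combining Lemma \ref{lem:6.10} with the condition of elimination. Your packaging is in fact slightly tidier, since computing the payoff difference as $Re(\overline{z_{i_0}}(c_{i_1}-c_{i_0}))$ and explicitly invoking $Re(z_{i_0})\geq 0$ (which the paper leaves implicit) makes the role of the elimination condition transparent in one line.
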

\begin{proof}(adapted from \cite{16})
Without loss of generality we can assume that $i_0$ is the first row and $i_1$ is the second row of $A$. We now prove that the following holds:
\begin{equation}
    Re(z^*Aw^0)\leq Re((z'^0)^*Aw^0)\leq Re((z'^0)^*Aw)\;\;\forall z\in S^{m}_\alpha\;\;\forall w\in S^{n}_\beta
\end{equation}\\
For the right inequality: since $(z^0,w^0)$ is a complex Nash equilibrium of $G_\mathcal{C}(A')$ the following holds:
\begin{equation}
    Re((\hat z^*A'w^0)\leq Re((z^0)^*A'w^0)\leq Re((z^0)^*A'w)\;\;\forall\hat{z}\in S^{m-1}_\alpha\;\;\forall w\in S^{n}_\beta
\end{equation}
One can now see that $(z^0)^*A'=(z'^0)^*A$ and therefore for some $w\in S^{n}_\beta:$
\begin{equation*}
    Re((z^0)^*A'w^0)\leq Re((z^0)^*A'w)\Leftrightarrow Re((z'^0)^*Aw^0)\leq Re((z'^0)^*Aw)
\end{equation*}
Hence by (6.2) the right inequality of (6.1) holds.\\ \\
For the left inequality: We have
\begin{equation*}
    Re((\hat z^*A'w^0)\leq Re((z^0)^*A'w^0)\;\;\forall\hat{z}\in S^{m-1}_\alpha\mbox\;{ and }\; (z^0)^*A'w^0=(z'^0)^*Aw^0\Rightarrow
\end{equation*}
\begin{equation*}
    \Rightarrow Re((z^0)^*A'w^0)=Re((z'^0)^*Aw^0)\geq Re(\hat{z}^*A'w^0)\;\;\forall\hat{z}\in S^{m-1}_\alpha
\end{equation*}
Therefore it suffices to show that:
\begin{equation*}
  \forall z\in S^{m}_\alpha\;\;\exists\hat{z}\in S^{m-1}_\alpha:\;\;Re(\hat{z}^*A'w^0)\geq Re(z^*Aw^0)  
\end{equation*}
Let $z=(z_1,z_2,...,z_m)\in S^{m}_\alpha$ and consider the complex vector $\hat{z}:=(z_1+z_2,z_3,z_4,...,z_m)$. Then $\hat{z}\in S^{m-1}_\alpha:$
\begin{equation*}
    \lvert argz_1\rvert\leq\alpha\;\mbox{ and}\;\;\lvert argz_2\rvert\leq\alpha\Rightarrow\lvert arg(z_1+z_2)\rvert\leq\alpha\;,\mbox{ and}\;\;\displaystyle\sum_{i=1}^{m-1}\hat{z}_i=\displaystyle\sum_{j=1}^{m}z_j=1
\end{equation*}
We now have:
\begin{equation*}
    Re(\hat{z}^*A'w^0)=Re(\overline{z_1+z_2}a_{21}w^0_1+\displaystyle\sum_{i=3}^{m}\overline{z_i}a_{i1}w_1^0)+Re(\overline{z_1+z_2}a_{22}w^0_2+\displaystyle\sum_{i=3}^{m}\overline{z_i}a_{i2}w_2^0)+...+
\end{equation*}
\begin{equation*}
    +Re(\overline{z_1+z_2}a_{2n}w^0_n+\displaystyle\sum_{i=3}^{m}\overline{z_i}a_{in}w_n^0)\;\mbox{ and }
\end{equation*}
\begin{equation*}
    Re(z^*Aw^0)=Re(\displaystyle\sum_{i=1}^{m}\overline{z_i}a_{i1}w^0_1)+Re(\displaystyle\sum_{i=1}^{m}\overline{z_i}a_{i2}w^0_2)+...+Re(\displaystyle\sum_{i=1}^{m}\overline{z_i}a_{in}w^0_n)
\end{equation*}\\ \\
Therefore we just have to prove that $Re((z_1e^1)^*Aw^0)\leq Re((z_1e^2)^*Aw^0)$ (why?), where:
\begin{equation*}
    Re((z_1e^1)^*Aw^0)=Re(z_1)Re((e^1)^*Aw^0)+Im(z_1)Im((e^1)^*Aw^0)
\end{equation*}
\begin{equation*}
    Re((z_1e^2)^*Aw^0)=Re(z_1)Re((e^2)^*Aw^0)+Im(z_1)Im((e^2)^*Aw^0)
\end{equation*}
The inequality $Re((e^1)^*Aw^0)\leq Re((e^2)^*Aw^0)$ holds by Lemma \ref{lem:6.10} since row $1$ is weakly dominated by row $2$. The equality $Im((e^1)^*Aw^0)=Im((e^2)^*Aw^0)$ also holds, since it represents the condition of elimination. Hence $Re((z_1e^1)^*Aw^0)\leq Re((z_1e^2)^*Aw^0)\Rightarrow (z'^0,w^0)$ is a complex Nash equilirium of $G_{\mathcal{C}}(A)$ and
\begin{equation*}
    v_{A'}=Re((z^0)^*A'w^0)=Re((z'^0)^*Aw^0)=v_A
\end{equation*}
\end{proof}
 \par The above theorem is also true for the case where a row $i_0$ is strictly dominated by another row $i_1$. The proof is exactly the same, while the left inequality is always strict. The respective theorem can be similarly proved in the case where a column $j_0$ is weakly (strictly) dominated by column $j_1$: the condition of elimination in this case is $Im((z^0)^*Ae^{j_0})=Im((z^0)^*Ae^{j_1})$ (we leave the details to the reader).\\

A respective theorem is proved in the case where the domination follows the type given in Definition \ref{def:6.5}. In fact:
\begin{lemma}\label{lem:6.13} 
If $Re((e^{i_0})^*Ad^j)\leq Re((\lambda e^{i_1}+(1-\lambda)e^{i_2})^*Ad^j)\;\;\forall j\in\mathcal{J}$ for some $\lambda\in(0,1)$ then $Re((e^{i_0})^*Aw)\leq Re((\lambda e^{i_1}+(1-\lambda)e^{i_2})^*Aw)\;\;\forall w\in S^{n}_\beta$
\end{lemma}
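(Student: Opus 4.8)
The plan is to mimic the proof of Lemma \ref{lem:6.10} almost verbatim, with the single dominating row $e^{i_1}$ replaced throughout by the fixed convex combination $p:=\lambda e^{i_1}+(1-\lambda)e^{i_2}$. The essential observation is that for any fixed complex vector $u$ the map $w\mapsto Re(u^*Aw)$ is $\mathbb{R}$-linear in $w$ whenever $w$ is written as a combination with \emph{real} coefficients, since such coefficients pass freely through $A$ and through $Re(\cdot)$.

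First I would fix an arbitrary $w\in S^{n}_\beta$. Since $S^{n}_\beta$ is a convex polytope, by Theorem \ref{th:2.6} there exist extreme points $d^{j_1},\dots,d^{j_k}$ and coefficients $\mu_1,\dots,\mu_k\in[0,1]$ with $\sum_{l=1}^{k}\mu_l=1$ such that $w=\sum_{l=1}^{k}\mu_l d^{j_l}$. This is exactly the representation used in Lemma \ref{lem:6.10}, and it reduces the claim about arbitrary $w$ to the hypothesis, which is stated only on the extreme points $d^j$ with $j\in\mathcal{J}$.

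Next, expanding both sides by the real linearity noted above gives
\[
Re((e^{i_0})^*Aw)=\sum_{l=1}^{k}\mu_l\,Re((e^{i_0})^*Ad^{j_l}),\qquad Re(p^*Aw)=\sum_{l=1}^{k}\mu_l\,Re(p^*Ad^{j_l}).
\]
Applying the hypothesis $Re((e^{i_0})^*Ad^{j_l})\leq Re(p^*Ad^{j_l})$ for each index $j_l\in\mathcal{J}$ and summing against the nonnegative weights $\mu_l$ yields $Re((e^{i_0})^*Aw)\leq Re(p^*Aw)$, which is the desired conclusion.

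I do not expect any genuine obstacle: the statement is a direct convexity argument and is structurally identical to Lemma \ref{lem:6.10}. The only point worth keeping in mind is that the coefficients $\mu_l$ furnished by the polytope representation are real, which is precisely what allows them to be pulled outside both $A$ and $Re(\cdot)$; the weight $\lambda\in(0,1)$ itself plays no role in the computation beyond defining the fixed vector $p$, so the argument is insensitive to the particular convex combination chosen.
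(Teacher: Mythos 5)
Your proof is correct and follows exactly the route the paper intends: the paper leaves this lemma as an exercise, noting its proof is ``similar to the one of Lemma \ref{lem:6.10}'', and you reproduce that convexity argument verbatim with $e^{i_1}$ replaced by the fixed vector $p=\lambda e^{i_1}+(1-\lambda)e^{i_2}$. Your remark that the real coefficients $\mu_l$ are what allow the expansion through $A$ and $Re(\cdot)$ is precisely the right point to flag, and the weight $\lambda$ indeed plays no further role.
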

The proof is similar to the one of Lemma \ref{lem:6.10} and is left to the reader as an exercise.\\
\begin{theorem}
Let $A'$ be a payoff matrix that results from the elimination of row $i_0$ of a payoff matrix $A$, where row $i_0$ is weakly dominated by rows $i_1,i_2$ for some $\lambda\in(0,1)$. If $(z^0:=(z^0_1,z^0_2,...,z^0_{i_0-1},z^0_{i_0+1},...,z^0_m),w^0:=(w^0_1,...,w^0_n))$ is a complex Nash equilibrium of $G_\mathcal{C}(A')$ and if $Im((e^{i_0})^*Aw^0)=\lambda Im((e^{i_1})^*Aw^0)+(1-\lambda)Im((e^{i_2})^*Aw^0)$ (condition of elimination), then $(z'^0:=(z^0_1,...,z^0_{i_0-1},0,z^0_{i_0+1},...,z^0_m),w^0)$ is a complex Nash equilibrium of $G_\mathcal{C}(A)$ and $v_{A'}=v_A$. 
\end{theorem}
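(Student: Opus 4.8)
The plan is to follow the proof of the preceding theorem (the single dominating row case) almost verbatim, replacing the single fold by a proportional fold into the two dominating rows. Without loss of generality I would take $i_0=1$. As before, by Proposition \ref{pr:4.28} it suffices to verify $Re(z^*Aw^0)\leq Re((z'^0)^*Aw^0)\leq Re((z'^0)^*Aw)$ for all $z\in S^{m}_\alpha$ and $w\in S^{n}_\beta$. For the right inequality I would note that inserting a zero in the $i_0$-th slot gives $(z^0)^*A'=(z'^0)^*A$, exactly as in the previous proof, so the inequality $Re((z^0)^*A'w^0)\leq Re((z^0)^*A'w)$ coming from the equilibrium of $G_\mathcal{C}(A')$ transfers directly.

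The heart of the argument is the left inequality, where I must produce, for each $z=(z_1,\dots,z_m)\in S^{m}_\alpha$, a vector $\hat z\in S^{m-1}_\alpha$ with $Re(\hat z^*A'w^0)\geq Re(z^*Aw^0)$. Whereas the single-row proof folded $z_1$ entirely into the dominating coordinate, here I would distribute it proportionally: define $\hat z$ by deleting the first coordinate and setting $\hat z_{i_1}:=z_{i_1}+\lambda z_1$ and $\hat z_{i_2}:=z_{i_2}+(1-\lambda)z_1$, leaving the remaining coordinates unchanged. The feasibility $\hat z\in S^{m-1}_\alpha$ is immediate: the coordinate sum is preserved since $\lambda z_1+(1-\lambda)z_1=z_1$, and each modified coordinate still satisfies $|\arg\hat z_{i_k}|\leq\alpha$ because $z_{i_k}$ and $\lambda z_1$ (resp.\ $(1-\lambda)z_1$) both lie in the convex cone $\{|\arg\zeta|\leq\alpha\}$, which is closed under addition.

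Expanding both sides exactly as in the previous proof, every term cancels except the contribution of the eliminated row, so the whole inequality reduces to $Re((z_1e^1)^*Aw^0)\leq Re((z_1(\lambda e^{i_1}+(1-\lambda)e^{i_2}))^*Aw^0)$. Using $Re(\overline{z_1}c)=Re(z_1)Re(c)+Im(z_1)Im(c)$ for each relevant $c$, I would split this into real and imaginary contributions. The real part uses $Re((e^1)^*Aw^0)\leq Re((\lambda e^{i_1}+(1-\lambda)e^{i_2})^*Aw^0)$, which is precisely Lemma \ref{lem:6.13} applied at $w^0\in S^{n}_\beta$, together with $Re(z_1)\geq0$ (guaranteed by $|\arg z_1|\leq\alpha<\frac{\pi}{2}$). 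The imaginary part contributes $Im(z_1)$ times $Im((e^1)^*Aw^0)-Im((\lambda e^{i_1}+(1-\lambda)e^{i_2})^*Aw^0)$, which vanishes exactly by the condition of elimination $Im((e^{i_0})^*Aw^0)=\lambda Im((e^{i_1})^*Aw^0)+(1-\lambda)Im((e^{i_2})^*Aw^0)$. The value equality then follows from $v_{A'}=Re((z^0)^*A'w^0)=Re((z'^0)^*Aw^0)=v_A$, as before.

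I expect no genuinely hard step here: the only delicate points are the bookkeeping of the proportional fold and the verification that $\hat z$ remains feasible, both of which are routine generalizations of the single-row argument once one observes that the two real coefficients $\lambda,1-\lambda$ do not alter the arguments of the folded entries and cancel against the condition of elimination on the imaginary side. The strictly dominated analogue would follow identically, with the left inequality made strict.
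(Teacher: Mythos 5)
Your proposal is correct and takes essentially the same route as the paper's proof: the identical proportional fold $\hat z_{i_1}:=z_{i_1}+\lambda z_1$, $\hat z_{i_2}:=z_{i_2}+(1-\lambda)z_1$ (the paper writes it, after relabeling, as $\hat z=(\lambda z_1+z_2,(1-\lambda)z_1+z_3,z_4,\dots,z_m)$), the same transfer of the right inequality via $(z^0)^*A'=(z'^0)^*A$, and the same reduction of the left inequality to $Re((z_1e^{i_0})^*Aw^0)\leq \lambda Re((z_1e^{i_1})^*Aw^0)+(1-\lambda)Re((z_1e^{i_2})^*Aw^0)$, handled by Lemma \ref{lem:6.13} on the real part and the condition of elimination on the imaginary part. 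Your explicit observation that $Re(z_1)\geq 0$ (from $\lvert\arg z_1\rvert\leq\alpha<\frac{\pi}{2}$) is needed to multiply the real-part inequality through is a small detail the paper leaves implicit.
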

 \begin{proof}
Without loss of generality we can assume that $i_0$ is the first row, $i_1$ is the second row and $i_2$ is the third row of $A$. We now prove that the following holds:
\begin{equation}
    Re(z^*Aw^0)\leq Re((z'^0)^*Aw^0)\leq Re((z'^0)^*Aw)\;\;\forall z\in S^{m}_\alpha\;\;\forall w\in S^{n}_\beta
\end{equation}\\
For the right inequality: since $(z^0,w^0)$ is a complex Nash equilibrium of $G_\mathcal{C}(A)$ the following holds:
\begin{equation}
    Re((\hat z^*A'w^0)\leq Re((z^0)^*A'w^0)\leq Re((z^0)^*A'w)\;\;\forall\hat{z}\in S^{m-1}_\alpha\;\;\forall w\in S^{n}_\beta
\end{equation}
One can now see that $(z^0)^*A'=(z'^0)^*A$ and therefore for some $w\in S^{n}_\beta:$
\begin{equation*}
    Re((z^0)^*A'w^0)\leq Re((z^0)^*A'w)\Leftrightarrow Re((z'^0)^*Aw^0)\leq Re((z'^0)^*Aw)
\end{equation*}
Hence by (6.4) the right inequality of (6.3) holds.\\ \\
For the left inequality: We have
\begin{equation*}
    Re((\hat z^*A'w^0)\leq Re((z^0)^*A'w^0)\;\;\forall\hat{z}\in S^{m-1}_\alpha\mbox\;{ and }\; (z^0)^*A'w^0=(z'^0)^*Aw^0\Rightarrow
\end{equation*}
\begin{equation*}
    \Rightarrow Re((z^0)^*A'w^0)=Re((z'^0)^*Aw^0)\geq Re(\hat{z}^*A'w^0)\;\;\forall\hat{z}\in S^{m-1}_\alpha
\end{equation*}
Therefore it suffices to show that:
\begin{equation*}
  \forall z\in S^{m}_\alpha\;\;\exists\hat{z}\in S^{m-1}_\alpha:\;\;Re(\hat{z}^*A'w^0)\geq Re(z^*Aw^0)  
\end{equation*}
Let $z=(z_1,z_2,...,z_m)\in S^{m}_\alpha$ and consider the complex vector $\hat{z}:=(\lambda z_1+z_2,(1-\lambda)z_1+z_3,z_4,...,z_m)$. Then $\hat{z}\in S^{m-1}_\alpha:$
\begin{equation*}
    \lvert argz_1\rvert\leq\alpha\;,\lvert argz_2\rvert\leq\alpha\;\mbox{ and}\;\;\lvert argz_3\rvert\leq\alpha\Rightarrow\lvert arg(\lambda z_1+z_2)\rvert\leq\alpha\;,\lvert arg((1-\lambda) z_1+z_3)\rvert\leq\alpha
\end{equation*}
\begin{equation*}
    \mbox{and }\;\displaystyle\sum_{i=1}^{m-1}\hat{z}_i=\displaystyle\sum_{j=1}^{m}z_j=1
\end{equation*}
We now have:
\begin{equation*}
    Re(\hat{z}^*A'w^0)=Re(\overline{\lambda z_1+z_2}a_{21}w^0_1+\overline{(1-\lambda) z_1+z_3}a_{31}w^0_1+\displaystyle\sum_{i=4}^{m}\overline{z_i}a_{i1}w_1^0)+
\end{equation*}
\begin{equation*}
    +Re(\overline{\lambda z_1+z_2}a_{22}w^0_2+\overline{(1-\lambda) z_1+z_3}a_{32}w^0_2+\displaystyle\sum_{i=4}^{m}\overline{z_i}a_{i2}w_2^0)+...\;+
\end{equation*}
\begin{equation*}
    +Re(\overline{\lambda z_1+z_2}a_{2n}w^0_n+\overline{(1-\lambda) z_1+z_3}a_{3n}w^0_n+\displaystyle\sum_{i=4}^{m}\overline{z_i}a_{in}w_n^0)\;\mbox{ and }
\end{equation*}
\begin{equation*}
    Re(z^*Aw^0)=Re(\displaystyle\sum_{i=1}^{m}\overline{z_i}a_{i1}w^0_1)+Re(\displaystyle\sum_{i=1}^{m}\overline{z_i}a_{i2}w^0_2)+...+Re(\displaystyle\sum_{i=1}^{m}\overline{z_i}a_{in}w^0_n)
\end{equation*}\\ \\
Therefore we just have to prove that $Re((z_1e^1)^*Aw^0)\leq \lambda Re((z_1e^2)^*Aw^0)+(1-\lambda)Re((z_1e^3)^*Aw^0)$ (why?), where:
\begin{equation*}
    Re((z_1e^1)^*Aw^0)=Re(z_1)Re((e^1)^*Aw^0)+Im(z_1)Im((e^1)^*Aw^0)
\end{equation*}
\begin{equation*}
    \lambda Re((z_1e^2)^*Aw^0)=Re(z_1)\lambda Re((e^2)^*Aw^0)+Im(z_1)\lambda Im((e^2)^*Aw^0)
\end{equation*}
\begin{equation*}
    (1-\lambda)Re((z_1e^3)^*Aw^0)=Re(z_1)(1-\lambda)Re((e^3)^*Aw^0)+Im(z_1)(1-\lambda)Im((e^3)^*Aw^0)
\end{equation*}\\
The inequality $Re((e^1)^*Aw^0)\leq \lambda Re((e^2)^*Aw^0)+(1-\lambda)Re((e^3)^*Aw^0)$ holds by Lemma \ref{lem:6.13} since row $1$ is weakly dominated by rows $2$ and $3$. The equality $Im((e^1)^*Aw^0)=\lambda Im((e^2)^*Aw^0)+(1-\lambda)Im((e^3)^*Aw^0)$ also holds, since it represents the condition of elimination. Hence $Re((z_1e^1)^*Aw^0)\leq \lambda Re((z_1e^2)^*Aw^0)+(1-\lambda)Re((z_1e^3)^*Aw^0)\Rightarrow (z'^0,w^0)$ is a complex Nash equilirium of $G_{\mathcal{C}}(A)$ and\\
\begin{equation*}
    v_{A'}=Re((z^0)^*A'w^0)=Re((z'^0)^*Aw^0)=v_A
\end{equation*}
\end{proof}
\par The above theorem is also true for the case where a row $i_0$ is strictly dominated by rows $i_1,i_2$ for some $\lambda\in(0,1)$. The proof is exactly the same, while the left inequality is always strict. The respective theorem can be similarly proved in the case where a column $j_0$ is weakly (strictly) dominated by columns $j_1,j_2$ for some $\lambda\in(0,1)$: the condition of elimination in this case is $Im((z^0)^*Ae^{j_0})=\lambda Im((z^0)^*Ae^{j_1})+(1-\lambda)Im((z^0)^*Ae^{j_2})$ (we leave the details to the reader).\\

\par
The process of iterated elimination of rows and columns of a payoff matrix, as it is described in the above through the search of weakly and strictly dominated rows and columns, facilitates the process of finding a game solution. Although the condition of elimination may seem strict and rare (especially the second type), there is a case where it is always met, no matter the number of dominations. Obviously, one can see that in the case of iterated eliminations and the formation of smaller and smaller submatrices, a number of iterated conditions of eliminations must be met, equal to the number of dominations, in order for the solution to remain unchanged.
\[\]\[\]
\section{Equalizing complex strategies}
In this section we define equalizing complex strategies, as they are defined in the real case. We show that by using such strategies one can easily calculate the solution of a two-player zero-sum complex game, if certain constraints are met.\\ \\
\begin{definition}\label{def:7.1}
Let $G_\mathcal{C}(A)$ be a two-player zero-sum complex game. A mixed strategy $z\in S^{m}_\alpha$ of player $I$ is called equalizing if $Re(z^*Ad^j)=c$ $\;\forall j\in\mathcal{J}$ for some $c\in\mathbb{R}$. Correspondingly, a mixed strategy $w\in S^{n}_\beta$ of player $II$ is called equalizing if $Re((d^i)^*Aw)=c$ $\;\forall i\in\mathcal{I}$ for some $c\in\mathbb{R}$. In both cases the real constant $c$ is called equalizing.
\end{definition}
\begin{lemma}\label{lem:7.2}
Let $z^0\in S^{m}_\alpha$ be an equalizing strategy of player $I$ and let $c$ be the equalizing constant. Then:
\begin{equation*}
    Re((z^0)^*Aw)=c\;\;\forall w\in S^{n}_\beta
\end{equation*}
Correspondingly, let $w^0\in S^{n}_\beta$ be an equalizing strategy of player $II$ and let $c$ be the equalizing constant. Then:
\begin{equation*}
    Re(z^*Aw^0)=c\;\;\forall z\in S^{m}_\alpha
\end{equation*}
\end{lemma}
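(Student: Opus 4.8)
The plan is to prove Lemma \ref{lem:7.2} directly by expressing an arbitrary mixed strategy $w \in S^n_\beta$ as a convex combination of the pure strategies (extreme points) of $S^n_\beta$, and then exploiting the linearity of the payoff function $Re(z^*Aw)$ in its second argument. Since $S^n_\beta$ is a convex polytope (by the second theorem of Section 3), Theorem \ref{th:2.6} guarantees that every $w \in S^n_\beta$ can be written as $w = \sum_{l=1}^k \lambda_l d^{j_l}$ for some extreme points $d^{j_1}, \dots, d^{j_k}$ and coefficients $\lambda_l \geq 0$ with $\sum_{l=1}^k \lambda_l = 1$. This decomposition is exactly the tool already used in the proof of Lemma \ref{lem:6.10}, so the argument runs in close parallel.

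First I would fix the equalizing strategy $z^0 \in S^m_\alpha$ with equalizing constant $c$, so that by Definition \ref{def:7.1} we have $Re((z^0)^*Ad^j) = c$ for every $j \in \mathcal{J}$. Then, for an arbitrary $w \in S^n_\beta$ with its convex decomposition $w = \sum_{l=1}^k \lambda_l d^{j_l}$, I would compute
\begin{equation*}
    Re((z^0)^*Aw) = Re\Bigl((z^0)^*A\sum_{l=1}^k \lambda_l d^{j_l}\Bigr) = \sum_{l=1}^k \lambda_l\, Re((z^0)^*Ad^{j_l}) = \sum_{l=1}^k \lambda_l\, c = c,
\end{equation*}
where the middle equality uses the fact that $Re(\cdot,\cdot)$ is real-linear in the second slot with real coefficients $\lambda_l$, and the final step uses $\sum_{l=1}^k \lambda_l = 1$. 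The symmetric statement for an equalizing strategy $w^0$ of player $II$ is proved identically, now decomposing an arbitrary $z \in S^m_\alpha$ over the extreme points of $S^m_\alpha$ and using $Re((d^i)^*Aw^0) = c$ for all $i \in \mathcal{I}$.

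There is essentially no hard part here: the only subtlety is to confirm that $Re(z^*Aw)$ is genuinely \emph{real}-linear in $w$ with respect to real convex coefficients, which is immediate because each $\lambda_l$ is a nonnegative real number and therefore commutes with the $Re(\cdot)$ operation. The decomposition of $w$ into extreme points is licensed by the convex-polytope structure of $S^n_\beta$ together with Theorem \ref{th:2.6}, both already established. Thus the lemma reduces to a one-line linearity calculation once the convex-combination representation is invoked, and the entire proof is a direct mirror of the computation appearing in Lemma \ref{lem:6.10}.
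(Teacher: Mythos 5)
Your proposal is correct and follows essentially the same route as the paper's proof: decompose an arbitrary $w\in S^{n}_\beta$ as a convex combination of extreme points (justified by the convex-polytope structure of $S^{n}_\beta$ and Theorem \ref{th:2.6}), then use the real-linearity of $Re((z^0)^*Aw)$ in $w$ together with $Re((z^0)^*Ad^{j})=c$ and $\sum_l\lambda_l=1$ to conclude, handling the second claim symmetrically. No gaps.
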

\begin{proof}
We only prove the first relation (the second is similarly proved). Let $z^0$ be an equalizing strategy of player $I$ and $w\in S^{n}_\beta$. There exist $\lambda_1,...,\lambda_k\in[0,1]$ and extreme points $d^{j_1},...,d^{j_k}$ for some integer $k$ such that
\begin{equation*}
    \displaystyle\sum_{i=1}^{k}\lambda_i=1\;\mbox{ and}\;\;\displaystyle\sum_{i=1}^{k}\lambda_id^{j_i}=w
\end{equation*}\\ \\
Then: $Re((z^0)^*Aw)=\lambda_1 Re((z^0)^*Ad^{j_1})+\lambda_2 Re((z^0)^*Ad^{j_2})+...+\lambda_k Re((z^0)^*Ad^{j_k})=\lambda_1 c+\lambda_2 c+...+\lambda_k c=c$
\end{proof}
\[\]
\begin{theorem}\label{th:7.3}
Let $G_\mathcal{C}(A)$ be a two-player zero-sum complex game. If $z^0\in S^{m}_\alpha$ is an equalizing strategy of player $I$ with equalizing constant $c_1$ and $w^0\in S^{n}_\beta$ is an equalizing strategy of player $II$ with equalizing constant $c_2$ then:
\begin{enumerate}
    \item strategies $z^0,w^0$ are optimal, that is $(z^0,w^0)$ is a complex Nash equilibrium of $G_\mathcal{C}(A)$ and
    \item $c_1=c_2=v_A$
\end{enumerate}
\end{theorem}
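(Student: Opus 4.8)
The plan is to reduce everything to Lemma \ref{lem:7.2}, which already converts the ``equalizing on pure strategies'' hypothesis into ``constant payoff against every mixed strategy.'' First I would apply the first part of Lemma \ref{lem:7.2} to $z^0$ to obtain $Re((z^0)^*Aw)=c_1$ for all $w\in S^{n}_\beta$, and the second part to $w^0$ to obtain $Re(z^*Aw^0)=c_2$ for all $z\in S^{m}_\alpha$. Evaluating both identities at the single pair $(z^0,w^0)$ gives $c_1=Re((z^0)^*Aw^0)=c_2$, so the two equalizing constants coincide; write $c$ for this common value.

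Next I would verify the saddle--point characterization of a complex Nash equilibrium from Proposition \ref{pr:4.28}. From the two constancy statements above we have $Re(z^*Aw^0)=c=Re((z^0)^*Aw^0)$ for every $z\in S^{m}_\alpha$ and $Re((z^0)^*Aw)=c=Re((z^0)^*Aw^0)$ for every $w\in S^{n}_\beta$. In particular the chain
\[
Re(z^*Aw^0)\le Re((z^0)^*Aw^0)\le Re((z^0)^*Aw)
\]
holds for all $z\in S^{m}_\alpha$ and all $w\in S^{n}_\beta$ (in fact with equality throughout). By Proposition \ref{pr:4.28} this is precisely the condition for $(z^0,w^0)$ to be a complex Nash equilibrium, which establishes part (1).

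Part (2) then follows from Proposition \ref{th:4.33}: since a complex Nash equilibrium $(z^0,w^0)$ exists, the security levels coincide, $\underline{v}=\overline{v}=v_A$, and the value of the game equals $Re((z^0)^*Aw^0)=c$. Hence $c_1=c_2=v_A$, completing the proof.

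Regarding difficulty, I do not expect a genuine obstacle here: the result is an immediate consequence of Lemma \ref{lem:7.2} combined with Propositions \ref{pr:4.28} and \ref{th:4.33}. The only point requiring a little care is the \emph{order} of the argument --- one must first deduce $c_1=c_2$ by evaluating the two constancy identities at the common pair $(z^0,w^0)$, since it is the equality of the two constants that makes the middle term of the Nash chain unambiguous and lets the saddle--point inequalities hold simultaneously against both players' deviations.
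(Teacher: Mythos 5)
Your proposal is correct and follows essentially the same route as the paper: both apply Lemma \ref{lem:7.2} to each equalizing strategy, evaluate at the common pair $(z^0,w^0)$ to get $c_1=c_2$, and then verify the saddle-point condition of Proposition \ref{pr:4.28} (which here holds with equality throughout). Your explicit appeal to Proposition \ref{th:4.33} for $c_1=c_2=v_A$ is a negligible variation on the paper's direct conclusion, so there is nothing to add.
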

\begin{proof}
Strategies $z^0$ and $w^0$ are equalizing, therefore by Lemma \ref{lem:7.2} we have:\\
\begin{equation}
    Re((z^0)^*Aw)=c_1\;\;\forall w\in S^{n}_\beta\Rightarrow Re((z^0)^*Aw^0)=c_1
\end{equation}
\begin{equation}
    Re(z^*Aw^0)=c_2\;\;\forall z\in S^{m}_\alpha\Rightarrow Re((z^0)^*Aw^0)=c_2
\end{equation}\\
By (7.1) and (7.2) we obtain $c_1=c_2$ and $Re(z^*Aw^0)=Re((z^0)^*Aw^0)=Re((z^0)^*Aw)$ $\forall z\in S^{m}_\alpha\;\;\forall w\in S^{n}_\beta$. Therefore by Proposition \ref{pr:4.28} $(z^0,w^0)$ is a complex Nash equilibrium and $c_1=c_2=v_A$.
\end{proof}
Theorem \ref{th:7.3} indicates that if we can find an equalizing strategy of player $I$ and an equalizing strategy of player $II$, then these two strategies are optimal and represent a solution of the game. This agrees with the constructive proof of the Minimax Theorem in Section 5. An interesting question now rises: do such equalizing strategies always exist, and if so, how can we find them? The following theorem answers such question.\\ \\
Firstly, the reader may easily observe that the following proposition is true by Definition \ref{def:7.1}:
\begin{proposition}\label{pr:7.4}
Let $G_\mathcal{C}(A)$ be a two-player zero-sum complex game. Player $I$ has an equalizing strategy if and only if the following (linear) system is feasible (has a solution):
\begin{center}
\begin{equation}\label{eq:7.3}
\begin{array}{ll}
    Re(z^*Ad^j)=c_1,\;\;\forall j\in\mathcal{J}\\
    \displaystyle\sum_{i=1}^{m}z_i=1\\
    \lvert argz_i\rvert\leq\alpha,\;\; i=1,2,...,m
    \end{array}
    \end{equation}
\end{center}
where $c_1\in\mathbb{R}$. Correspondingly,  Player $II$ has an equalizing strategy if and only if the following (linear) system is feasible (has a solution):
\begin{center}
\begin{equation}\label{eq:7.4}
\begin{array}{ll}
    Re((d^i)^*Aw)=c_2,\;\;\forall i\in\mathcal{I}\\
    \displaystyle\sum_{j=1}^{n}w_j=1\\
    \lvert argw_j\rvert\leq\beta,\;\; j=1,2,...,n
    \end{array}
    \end{equation}
\end{center}
where $c_2\in\mathbb{R}$.
\end{proposition}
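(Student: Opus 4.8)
The plan is to verify that the proposition is a direct unpacking of Definition \ref{def:7.1}, requiring no intermediate machinery. First I would recall that, by definition, a mixed strategy of player $I$ is precisely an element $z\in S^{m}_\alpha$, and that by the very definition of the strategy set, $z\in S^{m}_\alpha$ holds if and only if $\sum_{i=1}^{m}z_i=1$ together with $|arg z_i|\leq\alpha$ for every $i$. These two conditions are exactly the second and third lines of the system \eqref{eq:7.3}.

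Next I would invoke the defining property of an equalizing strategy: the strategy $z$ is equalizing for player $I$ precisely when there is a real constant $c_1$ with $Re(z^*Ad^j)=c_1$ for every $j\in\mathcal{J}$, which is the first line of \eqref{eq:7.3}. Combining the two observations, the existence of an equalizing strategy for player $I$ is logically identical to the existence of a pair $(z,c_1)$ satisfying all three lines of \eqref{eq:7.3} simultaneously, i.e. to the feasibility of that system. The forward implication (equalizing strategy exists $\Rightarrow$ system feasible) and its converse (system feasible $\Rightarrow$ equalizing strategy exists) are then the same statement read in two directions, so nothing beyond this translation is needed.

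The argument for player $II$ is obtained by applying the same unpacking to the second half of Definition \ref{def:7.1}, replacing $z$ by $w$, the set $S^{m}_\alpha$ by $S^{n}_\beta$, and the family $\{Re(z^*Ad^j)\}_{j\in\mathcal{J}}$ by $\{Re((d^i)^*Aw)\}_{i\in\mathcal{I}}$; this yields the feasibility of \eqref{eq:7.4}. I do not anticipate any genuine obstacle here, since the entire content of the proposition lies in the observation that the phrases \emph{mixed strategy} and \emph{equalizing} translate, term by term, into the three constraints of a single system. The only point worth stating with a little care is that the membership condition $z\in S^{m}_\alpha$ is equivalent to the conjunction of the simplex constraint and the argument constraint, after which the equivalence is immediate.
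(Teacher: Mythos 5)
Your proposal is correct and matches the paper exactly: the paper offers no separate argument, stating that the proposition ``is true by Definition \ref{def:7.1}'', which is precisely your term-by-term translation of the membership condition $z\in S^{m}_\alpha$ (respectively $w\in S^{n}_\beta$) and the equalizing condition into the three lines of each system, with the pair $(z,c_1)$ (respectively $(w,c_2)$) as the feasibility variables. Nothing further is needed.
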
\[\]
\begin{theorem}\label{th:7.5}
A two-player zero-sum complex game $G_\mathcal{C}(A)$ has a complex Nash equilibrium in equalizing strategies if and only if both of the following (linear) systems have at least one solution:
\begin{center}
\begin{equation}
\label{eq:7.5}
\begin{array}{ll}
    \displaystyle\sum_{i=1}^{m}\overline{z_i}a_{ij}=\eta,\;\;\;j=1,2,...,n\\
    \displaystyle\sum_{i=1}^{m}\overline{z_i}=1\\
    \lvert arg\overline{z_i}\rvert\leq\alpha,\;\;\;i=1,2,...,m
    \end{array}
    \end{equation}
\end{center}\[\]
\begin{center}
\begin{equation}\label{eq:7.6}
\begin{array}{ll}
    \displaystyle\sum_{j=1}^{n}a_{ij}w_j=\theta,\;\;\;i=1,2,...,m\\
    \displaystyle\sum_{j=1}^{n}w_j=1\\
    \lvert argw_j\rvert\leq\beta,\;\;\;j=1,2,...,n
    \end{array}
    \end{equation}
\end{center}\[\]
where $\eta,\theta\in\mathbb{C}$. If (7.5) and (7.6) have solutions then $Re(\eta)=Re(\theta)=v_A$.
\end{theorem}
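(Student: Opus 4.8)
The plan is to show that system~(7.5) is nothing but a repackaging of the statement ``$z$ is an equalizing strategy of player $I$'', and symmetrically that~(7.6) repackages ``$w$ is an equalizing strategy of player $II$''. Once this reduction is established, the theorem drops out: Theorem~\ref{th:7.3} guarantees that a pair of equalizing strategies is a complex Nash equilibrium with common equalizing constant $v_A$, while Proposition~\ref{pr:7.4} ties the existence of equalizing strategies to the feasibility of the relevant systems. The only genuine work lies in reducing the $n^2$ real conditions that define an equalizing strategy of player $I$ (one per extreme point of $S^n_\beta$) to the $n$ complex equations in~(7.5), and likewise for player $II$.

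First I would isolate the key equivalence. For $z\in S^m_\alpha$ set $u:=z^*A$, so that $u_j=\sum_{i=1}^m\overline{z_i}a_{ij}$. I claim that $z$ is equalizing with constant $c_1$ if and only if $u_j=\eta$ for all $j$ and some $\eta\in\mathbb{C}$ with $Re(\eta)=c_1$, which is exactly system~(7.5); here the constraints $|arg\overline{z_i}|\leq\alpha$ and $\sum_i\overline{z_i}=1$ are equivalent to $z\in S^m_\alpha$, since conjugation preserves both the modulus of the argument and the real coordinate sum. The ``if'' direction is immediate: every extreme point $d^j$ satisfies $\sum_k(d^j)_k=1$, so $z^*Ad^j=\sum_k u_k(d^j)_k=\eta$ and hence $Re(z^*Ad^j)=Re(\eta)$ for every $j$.

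The ``only if'' direction is the substantive step, and it is here that the explicit description of the extreme points in Theorem~\ref{th:2.28} is essential. Evaluating the equalizing condition $Re(z^*Ad^j)=c_1$ on the trivial extreme points $e^k$ gives $Re(u_k)=c_1$ for every $k$. Evaluating it on a non-trivial extreme point $\eta^j$, which carries $\frac{1}{2}+bi$ in some coordinate $p$ and $\frac{1}{2}-bi$ in some coordinate $q$, a short computation yields
\begin{equation*}
Re(z^*A\eta^j)=\frac{1}{2}Re(u_p)+\frac{1}{2}Re(u_q)-b\,Im(u_p)+b\,Im(u_q),
\end{equation*}
so that, using $Re(u_p)=Re(u_q)=c_1$ already obtained, the requirement $Re(z^*A\eta^j)=c_1$ collapses to $Im(u_p)=Im(u_q)$ (recall $b>0$ since $\alpha_0\in(0,\frac{\pi}{2})$). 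Because the vectors $\eta^j$ realise every pair of distinct coordinates, this forces all $Im(u_k)$ to coincide, and together with $Re(u_k)=c_1$ this gives $u_j=\eta$ with $Re(\eta)=c_1$, as claimed. The symmetric statement for player $II$ follows from the same computation applied to $v:=Aw$, the extreme points of $S^m_\alpha$, and the constant $c_2=Re(\theta)$; the only difference is a sign change coming from the extra conjugation in $(d^i)^*Aw$, which leaves the conclusion $v_i=\theta$ for all $i$ intact.

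With the equivalence in hand the proof assembles quickly. If~(7.5) and~(7.6) are solvable, the equivalence produces an equalizing strategy $z$ for player $I$ with constant $Re(\eta)$ and an equalizing strategy $w$ for player $II$ with constant $Re(\theta)$; by Theorem~\ref{th:7.3} the pair $(z,w)$ is a complex Nash equilibrium in equalizing strategies and $Re(\eta)=Re(\theta)=v_A$. Conversely, a complex Nash equilibrium $(z^0,w^0)$ in equalizing strategies supplies, through the equivalence, a solution $\overline{z^0}$ of~(7.5) and a solution $w^0$ of~(7.6). I expect the main obstacle to be exactly the ``only if'' computation on the $\eta^j$: one must track the conjugations carefully and verify that ranging over all placements of $\frac{1}{2}\pm bi$ genuinely exhausts every pair of coordinates, which is precisely what Theorem~\ref{th:2.28} secures.
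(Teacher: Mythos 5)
Your proposal is correct and takes essentially the same route as the paper's proof: both reduce the $n^2$ real conditions $Re(z^*Ad^j)=c_1$ to the $n$ complex equations of (7.5) by evaluating on the trivial extreme points $e^k$ to get $Re(u_k)=c_1$ and on the non-trivial extreme points $\eta^j$ to force $Im(u_p)=Im(u_q)$ for every pair $p\neq q$ (the paper's relation (7.7)), then invoke Proposition \ref{pr:7.4} and Theorem \ref{th:7.3} exactly as you do. One notational quibble: since the conjugates $\overline{z_i}$ already appear in the equations of (7.5), an equalizing strategy $z^0$ is itself a solution of (7.5) as written --- it is only for the unconjugated variant (8.1) of Section 8 that one passes to $\overline{z^0}$ --- but this does not affect your argument.
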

\begin{proof}
We show that (\ref{eq:7.5}) is equivalent to (\ref{eq:7.3}) and that (\ref{eq:7.6}) is equivalent to (\ref{eq:7.4}): Relation $Re(z^*Ad^j)=c_1\;\;\forall j\in\mathcal{J}$ is equivalent to the following $n^2$ relations:\\
\begin{equation*}
    d^j=e^1:\;\;\;Re(\overline{z_1}a_{11}+\overline{z_2}a_{21}+...+\overline{z_m}a_{m1})=c_1\;\;\;\;\;\;\;\;\;\;\;\;\;\;\;\;\;\;\;\;\;\;\;\;\;\;\;\;\;\;\;\;\;\;\;\;\;\;\;\;\;\;\;\;\;\;\;\;\;\;\;\;\;\;\;\;\;\;\;\;\;\;\;\;(1)\;\;\;
\end{equation*}
\begin{equation*}
    d^j=e^2:\;\;\;Re(\overline{z_1}a_{12}+\overline{z_2}a_{22}+...+\overline{z_m}a_{m2})=c_1\;\;\;\;\;\;\;\;\;\;\;\;\;\;\;\;\;\;\;\;\;\;\;\;\;\;\;\;\;\;\;\;\;\;\;\;\;\;\;\;\;\;\;\;\;\;\;\;\;\;\;\;\;\;\;\;\;\;\;\;\;\;\;\;(2)\;\;\;
\end{equation*}
\begin{equation*}
    \vdots\;\;\;\;\;\;\;\;\;\;\;\;\;\;\;\;\;\;\;\;\;\;\;\;\;\;\;\;\;\;\;\;\;\;\;\;\;\;\;\;\;\;\;\;\;\;\;\;\;\;\;\;\;\;\;\;\;\;\;\;\;\;\;\;\;\;\;\;\;\;\;\;\;\;\;\;\;\;\;\;\;\;\;\;\;\;\;\;\;\;\;\;\;\;\;\;\;\;\;\;\;\;\;\;\;\;\;\;\;\;\;\;\;\;\;\;\;\;\;\;\;\;\;\;\;\;\;\;\;\;\;\;\;\;\;\;\;
\end{equation*}
\begin{equation*}
    d^j=e^n:\;\;\;Re(\overline{z_1}a_{1n}+\overline{z_2}a_{2n}+...+\overline{z_m}a_{mn})=c_1\;\;\;\;\;\;\;\;\;\;\;\;\;\;\;\;\;\;\;\;\;\;\;\;\;\;\;\;\;\;\;\;\;\;\;\;\;\;\;\;\;\;\;\;\;\;\;\;\;\;\;\;\;\;\;\;\;\;\;\;\;\;\;\;(n)\;\;\;
\end{equation*}
\begin{equation*}
    d^j=\eta^{(n+1)}:\;Re[(\overline{z_1}a_{11}+...+\overline{z_m}a_{m1})(\frac{1}{2}+bi)+(\overline{z_1}a_{12}+...+\overline{z_m}a_{m2})(\frac{1}{2}-bi)]=c_1\;\;\;\;\;\;(n+1)
\end{equation*}
\begin{equation*}
    d^j=\eta^{(n+2)}:\;Re[(\overline{z_1}a_{11}+...+\overline{z_m}a_{m1})(\frac{1}{2}+bi)+(\overline{z_1}a_{13}+...+\overline{z_m}a_{m3})(\frac{1}{2}-bi)]=c_1\;\;\;\;\;\;(n+2)
\end{equation*}
\begin{equation*}
    \vdots\;\;\;\;\;\;\;\;\;\;\;\;\;\;\;\;\;\;\;\;\;\;\;\;\;\;\;\;\;\;\;\;\;\;\;\;\;\;\;\;\;\;\;\;\;\;\;\;\;\;\;\;\;\;\;\;\;\;\;\;\;\;\;\;\;\;\;\;\;\;\;\;\;\;\;\;\;\;\;\;\;\;\;\;\;\;\;\;\;\;\;\;\;\;\;\;\;\;\;\;\;\;\;\;\;\;\;\;\;\;\;\;\;\;\;\;\;\;\;\;\;\;\;\;\;\;\;\;\;\;\;\;\;\;\;\;\;
\end{equation*}
\begin{equation*}
    d^j=\eta^{n^2}:Re[(\overline{z_1}a_{1(n-1)}+...+\overline{z_m}a_{m(n-1)})(\frac{1}{2}-bi)+(\overline{z_1}a_{1n}+...+\overline{z_m}a_{mn})(\frac{1}{2}+bi)]=c_1\;(n^2)
\end{equation*}\\
By (1) and (2) relation (n+1) can also be written as:\\
\begin{equation*}
    \frac{1}{2}c_1-Im(\overline{z_1}a_{11}+\overline{z_2}a_{21}+...+\overline{z_m}a_{m1})b+\frac{1}{2}c_1+Im(\overline{z_1}a_{12}+\overline{z_2}a_{22}+...+\overline{z_m}a_{m2})b=c_1\Leftrightarrow
\end{equation*}
\begin{equation*}
    \Leftrightarrow Im(\overline{z_1}a_{11}+\overline{z_2}a_{21}+...+\overline{z_m}a_{m1})=Im(\overline{z_1}a_{12}+\overline{z_2}a_{22}+...+\overline{z_m}a_{m2})
\end{equation*}\\
Similarly, by all the above relations, we conclude that relations $(n+1),(n+2),...,(n^2)$ are equivalent to:
\begin{equation}\label{eq:7.7}
    Im(\displaystyle\sum_{i=1}^{m}\overline{z_i}a_{ik})=Im(\displaystyle\sum_{i=1}^{m}\overline{z_i}a_{il}):=p_1\;\;\forall k\not =l
\end{equation}
By (\ref{eq:7.7}) and $(1),(2),...,(n)$ we equivalently have the $n$ relations:
\begin{equation*}
    \displaystyle\sum_{i=1}^{m}\overline{z_i}a_{ij}=c_1+p_1i:=\eta\in\mathbb{C},\;\;\; j=1,2,...,n
\end{equation*}\\
Now it is obvious that $\displaystyle\sum_{i=1}^{m}\overline{z_i}=1\Leftrightarrow\displaystyle\sum_{i=1}^{m}z_i=1$ and $\lvert arg\overline{z_i}\rvert\leq\alpha,\;\;\forall i\in\{1,2,...,m\}\Leftrightarrow\lvert argz_i\rvert\leq\alpha,\;\;\forall i\in\{1,2,...,m\}$, hence (\ref{eq:7.5}) is equivalent to (\ref{eq:7.3}).\\ \\
Similarly one can prove that (\ref{eq:7.6}) is equivalent to (\ref{eq:7.4}). Finally, if both (\ref{eq:7.5}) and (\ref{eq:7.6}) have solutions, then $Re(\eta)=c_1=v_A=c_2=Re(\theta)$ by Proposition \ref{pr:7.4} and Theorem \ref{th:7.3}.\\
\end{proof}
\begin{remark}
Note that if (\ref{eq:7.5}) (respectively (\ref{eq:7.6})) has a solution then by (\ref{eq:7.7}) the condition of elimination is always met, meaning that we can search for a solution in any submatrix of $A$ that occurs from iterated eliminations of dominated rows and columns simply by solving (\ref{eq:7.5}), just as in the real case. 
\end{remark}
We end this section with a definition:
\begin{definition}
Let $G_\mathcal{C}(A)$ be a two-player zero-sum complex game. The smallest strategy argument $0<\gamma<\frac{\pi}{2}e$ such that $\underline{h}<\overline{h}$ and the complex systems (7.5), (7.6) are feasible for $\alpha=\beta=\gamma$, is called smallest equalizing argument or lowest equalizing argument (sea/lea).
\end{definition}
Note that such argument does not always exist.\[\]\[\]
\section{Solution of two-player zero-sum complex games}
In this section we describe the process of solving a two-player zero-sum complex game, which is somewhat similar to the one in real space. The following methods are not mentioned or used in \cite{19, 20} during the solution of $2\times2$ matrix games, while we also give a specific example.\[\]

We give the different steps that one should follow in order to solve a two-player zero-sum complex game:\\
\begin{enumerate}
    \item We first check if $\overline{h}=\underline{h}$.  If $\overline{h}>\underline{h}$ then a solution in pure complex strategies does not exist and we move on to the next step.\\
    \item We search for and locate all dominated rows and columns, mainly by Definitions \ref{def:6.3}--\ref{def:6.6}. There are cases where a row of the payoff matrix $A$ is not initially dominated by another row/other rows, but is dominated in a submatrix that occurs from an elimination of a column (same goes with columns). We eliminate such dominated rows and columns until we obtain a submatrix $A'$ where there are no more dominations.\\
    \item We solve the following linear systems in $\mathbb{C}$:
    \begin{center}
\begin{equation}\label{eq:8.1}
\begin{array}{ll}
    \displaystyle\sum_{i=1}^{m}z_ia_{ij}=\eta,\;\;\;j=1,2,...,n\\
    \displaystyle\sum_{i=1}^{m}z_i=1\\
    \end{array}
    \end{equation}
\end{center}
\begin{center}
\begin{equation}\label{eq:8.2}
\begin{array}{ll}
    \displaystyle\sum_{j=1}^{n}a_{ij}w_j=\theta,\;\;\;i=1,2,...,m\\
    \displaystyle\sum_{j=1}^{n}w_j=1
    \end{array}
    \end{equation}\\
\end{center}
for constants $\eta,\theta\in\mathbb{C}$. If (\ref{eq:8.1}) and (\ref{eq:8.2}) have solutions $z^0$ and $w^0$ respectively, then we check if they satisfy:\\
\begin{equation}\label{eq:8.3}
    \lvert argz^0\rvert\leqq\alpha\;\mbox{ and }\;\lvert argw^0\rvert\leqq\beta
\end{equation}\\
If they do satisfy such constraints, then $\overline{z^0},w^0$ are equalizing strategies of players $I$ and $II$ respectively, and since the condition of elimination is met (for every single submatrix during the proccess of iterated elimination) by Theorem \ref{th:7.5} $(\overline{z^0},w^0)$ is a complex Nash equilibrium and $Re(\theta)=Re(\eta)=v_A$. There are many ways of solving complex linear systems of the form $Bz=b$ similar to the real case (e.g. through the augmented complex matrix and row reduction/Gaussian elimination). Note that the existence of such strategies agree with the construcrtive proof of the Minimax Theorem we gave in Section 5.\\
\item If either (\ref{eq:8.1}) or (\ref{eq:8.2}) does not have a solution, or if there exist solutions $z^0,w^0$ that do not satisfy (\ref{eq:8.3}), then we need a different approach: Let $\theta$ and $\eta$ be optimal feasible solutions of problems $(P^*)$ and $(D^*)$ respectively, as they were formulated in the proof of the Minimax Theorem in Section 5
\begin{center}
\begin{equation*}{(P^*)\;\;\;\;\;\;\;\;}
\begin{array}{ll}
    min\;\;Re(e^*z') \\
    s.t.\;\;Az'-e \in(\hat{\mathcal{T}})^*\\
     z'\in\hat{\mathcal{S}}
    \end{array}
    \end{equation*}
\end{center}

\begin{center}
\begin{equation*}{(D^*)\;\;\;\;\;\;\;\;}
\begin{array}{ll}
    max\;\;Re(e^*w') \\
    s.t.\;\;-A^*w'+e\in(\hat{\mathcal{S}})^*\\
     w'\in\hat{\mathcal{T}}
    \end{array}
    \end{equation*}
\end{center}
By the constructive proof of the Minimax Theorem we obtained that $\frac{1}{\underline{v}}=\theta=\eta=\frac{1}{\overline{v}}$. Therefore it suffices to calculate the solutions of the problems $(P^*),\;(D^*)$. However, such a task most of the time requires an exact algorithmic process. Since a precise calculating process, similar to the simplex method, is not yet known in complex space (due to the complexity of the closed convex cones), we suggest a different method.\par
Consider the problems $(\tilde{P}),\;(\tilde{D})$ as they are defined by:
\begin{center}
\begin{equation*}{(\tilde{P})\;\;\;\;\;\;\;\;}
\begin{array}{ll}
    min\;\;Re(e^*z') \\
    s.t.\;\;Az'-e \in(\tilde{\mathcal{T}})^*\\
     z'\in\hat{\mathcal{S}}
    \end{array}
    \end{equation*}
\end{center}

\begin{center}
\begin{equation*}{(\tilde{D})\;\;\;\;\;\;\;\;}
\begin{array}{ll}
    max\;\;Re(e^*w') \\
    s.t.\;\;-A^*w'+e\in(\tilde{\mathcal{S}})^*\\
     w'\in\hat{\mathcal{T}}
    \end{array}
    \end{equation*}
\end{center}
where $\tilde{\mathcal{S}}:=\{z\in\mathbb{C}^{m}:\lvert argz\rvert\leqq\alpha\}\supset\hat{\mathcal{S}}$, $\tilde{\mathcal{T}}:=\{w\in\mathbb{C}^{n}:\lvert argw\rvert\leqq\beta\}\supset\hat{\mathcal{T}}$. Let $\tilde{\theta}$ be an optimal solution of $(\tilde{P})$ and $\tilde{\eta}$ be an optimal solution of $(\tilde{D})$, then $\tilde{\eta}\leq\eta=\theta\leq\tilde{\theta}$. The problems $(\tilde{P}),\;(\tilde{D}),\;\{\tilde{\eta}=\tilde{\theta}\}$ are equivalent to the complex linear complementarity problem (see Definition 2.9):
\begin{equation*}
        M=\begin{pmatrix}
        0& -A& -E& 0\\
        A^*& 0& 0& -E^*\\
        E^*& 0& 0& 0\\
        0& E& 0& 0
        \end{pmatrix},\;\;x=\begin{pmatrix}
        z'\\
        w'\\
        0\\
        0
        \end{pmatrix},\;\;q=\begin{pmatrix}
        e\\
        -e\\
        0\\
        0
        \end{pmatrix},\;\;\gamma=\begin{pmatrix}
        \alpha\\
        \beta\\
        \frac{\pi}{2}e\\
        \frac{\pi}{2}e
        \end{pmatrix}
    \end{equation*}
If this complex LCP has a solution, then $\theta=\tilde{\theta}=\tilde{\eta}=\eta$ and $z',\;w'$ are also the solutions of the problems $(P^*),\;(D^*)$.    
We refer the reader to \cite{14} for a solution method regarding this specific complex LCP.
Note that a solution of the complex LCP does not always exist.\\ \\
\end{enumerate} \par
By following these four steps, a complex Nash equilibrium is not guaranteed since the iterated conditions of elimination might not be met. Therefore, if a solution in equalizing strategies does not exist, we may need to solve the problems $(\tilde{P})$, $(\tilde{D})$ (that is, the equivalent complex LCP through pivoting steps or transforming problems, see \cite{13,14,17,22}) in their initial form, that is, with the initial payoff matrix $A$ instead of the one that occurs from the process of iterated elimination. The reader should also notice that such matrix $A$ must satisfy the initial hypothesis that appears in the proof of the Minimax Theorem, that is, the matrix $A$ may be of the form $A+cE$ for some suitable $c>0$.

\par
For a different approach towards the solution of complex linear programming problems and two-player zero-sum complex games we refer the reader to \cite{21} and \cite{24}.
\[\]
\begin{example}
Let $G_\mathcal{C}(A)$ be a two-player zero-sum complex game, where $A$ is the payoff matrix:\\
\begin{equation*}
    A=\begin{pmatrix}
2 & 1+i & 5+2i\\
3+i & 3 & 4-i
\end{pmatrix}
\end{equation*}\\
and let $S^2_{\frac{\pi}{4}}$, $S^3_{\frac{5\pi}{12}}$ be the strategy sets of players $I$ and $II$ respectively.\\ \\
Firstly, one can see that $\underline{h}<\overline{h}$: the reader may check that $\underline{h}=\frac{(7-\sqrt{3})}{4}<3=\overline{h}$.\\ \\
We then locate all dominated rows and columns of $A$: The third column is weakly dominated by the first column. Indeed we have:
\begin{itemize}
        \item $Re((e^1)^*Ae^3)=5>2=Re((e^1)^*Ae^1)$
        \item $Re((e^2)^*Ae^3)=4>3=Re((e^2)^*Ae^1)$
        \item $Re((\eta^3)^*Ae^3)=6>2=Re((\eta^3)^*Ae^1)$
        \item $Re((\eta^4)^*Ae^3)=3=Re((\eta^4)^*Ae^1)$
\end{itemize}\[\]
Let $A'=\begin{pmatrix}
2 & 1+i\\
3+i & 3
\end{pmatrix}$ \;\;be the matrix that occurs from the elimination of the third column of $A$. We now solve the following complex linear systems:
\begin{center}
\begin{equation}
\begin{array}{ll}
    z_12+z_2(3+i)=\eta\\
    z_1(1+i)+z_23=\eta\\
    z_1+z_2=1
    \end{array}
    \end{equation}
\end{center}
\begin{center}
\begin{equation}
\begin{array}{ll}
    w_12+w_2(1+i)=\theta\\
    w_1(3+i)+w_23=\theta\\
    w_1+w_2=1
    \end{array}
    \end{equation}
\end{center}\[\]
for some $\eta,\theta\in\mathbb{C}$. One can easily find solutions $z^0=\displaystyle(\frac{2}{5}-\frac{i}{5},\frac{3}{5}+\frac{i}{5})^T$ of $(8.4)$ and $w^0=(\displaystyle\frac{4}{5}+\displaystyle\frac{3}{5}i,\displaystyle\frac{1}{5}-\displaystyle\frac{3}{5}i)^T$ of $(8.5)$ that satisfy: $|arg z^0|\leqq\displaystyle\frac{\pi}{4}$ and $|arg w^0|\leqq\displaystyle\frac{5\pi}{12}$. Therefore $(\overline{z^0},w^0)$ is a complex Nash equilibrium of $G_\mathcal{C}(A)$ and the value of the complex game is:
\begin{equation*}
v=Re((\frac{2}{5}-\frac{i}{5})2+(\frac{3}{5}+\frac{i}{5})(3+i))=Re((\frac{4}{5}+\frac{3}{5}i)2+(\frac{1}{5}-\frac{3}{5}i)(1+i))=\displaystyle\frac{12}{5}
\end{equation*}
Note that $v=\displaystyle\frac{12}{5}\in(\underline{h},\overline{h})$.
\end{example}

\section{Games of common argument and Symmetric games}
In this final section we define complex games of common argument and deal with two-player zero-sum symmetric games in the complex case. Such symmetric games are defined differently to \cite{20}.\[\]
\begin{definition}
Let $G_\mathcal{C}=(N,(S^{m_i}_{\alpha_i})_{i\in N},(h_i)_{i\in N})$ be a finite complex game, where $N=\{1,2,...,k\}$. If $\alpha_1=\alpha_2=...=\alpha_k:=\alpha$, then $G_\mathcal{C}$ is a complex game of common argument $\alpha$. \\
\end{definition}
The interpretation of this definition is that in a complex game of common argument all players are only allowed to play strategies that have exactly the same geometric ``bound". That is, all players act fairly between one another and there is no player who is allowed to choose a complex strategy with argument big enough, in relation to the rest, to allow him/her to efficiently improve his/her payoff or force other players to change their own strategies.\[\]
\begin{definition}
A complex game is called fair if the value of the complex game is equal to zero.
\end{definition}
\begin{definition}
A two-player zero-sum complex game $G_\mathcal{C}(A)$ is called square if $A\in\mathbb{C}^{n\times n}$ for some $n\geq 2$.
\end{definition}
\begin{definition}
A two-player zero-sum complex game $G_\mathcal{C}(A)$ is called skew-hermitian if it is square and $A=-A^*$.
\end{definition}
\begin{definition}\label{def:9.5}
A skew-hermitian complex game $G_\mathcal{C}(A)$ of common argument $\alpha$ is called symmetric.
\end{definition}
The property of common argument is necessary in order for Definition \ref{def:9.5} to be the generalization of the one given in real space. That is, in a two-player zero-sum symmetric complex game players $I$ and $II$ can swap identities any time, without anyone gaining advantage or disadvantage against the other. The players need to have the same strategy argument in order for this symmetric property to hold, as shown in the following.\[\]
\begin{proposition}
Let $G_\mathcal{C}(A)$ be a two-player zero-sum symmetric complex game with common strategy argument $\alpha$. Then a row $k$ of $A$ is weakly (strictly) dominated by row $l$ if and only if column $k$ is weakly (strictly) dominated by column $l$.
\end{proposition}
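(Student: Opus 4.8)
The plan is to deduce the entire equivalence from a single antisymmetry identity for the payoff form. First I would record the elementary consequence of skew-hermiticity: since $A=-A^*$, for all $u,v\in\mathbb{C}^n$ we have $\overline{u^*Av}=(u^*Av)^*=v^*A^*u=-v^*Au$, and taking real parts gives
\begin{equation*}
    Re(u^*Av)=-Re(v^*Au)\qquad\forall u,v\in\mathbb{C}^n.
\end{equation*}
This is the engine of the proof. I would also observe that because a skew-hermitian game is square ($m=n$) and a symmetric game is of common argument ($\alpha=\beta$), the two strategy sets coincide, $S^m_\alpha=S^n_\beta$; in particular the pure-strategy sets and their index sets agree, so $\{d^i\}_{i\in\mathcal{I}}=\{d^j\}_{j\in\mathcal{J}}$ with $\mathcal{I}=\mathcal{J}$. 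This common-argument hypothesis is exactly what makes the ``row'' and ``column'' comparisons range over the same family of extreme points, and is the reason the statement would fail without it.

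Next I would apply the identity with $u\in\{e^k,e^l\}$ and $v=d^j$. For the first condition of Definition \ref{def:6.3}, the row statement $Re((e^k)^*Ad^j)\le Re((e^l)^*Ad^j)$ for all $j\in\mathcal{J}$ becomes, after substituting $Re((e^k)^*Ad^j)=-Re((d^j)^*Ae^k)$ and likewise for $l$,
\begin{equation*}
    Re((d^j)^*Ae^k)\ge Re((d^j)^*Ae^l)\qquad\forall j\in\mathcal{J}.
\end{equation*}
Since $\mathcal{I}=\mathcal{J}$ and $\{d^j\}=\{d^i\}$, this is precisely the first condition for column $k$ to be weakly dominated by column $l$. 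The sign reversal in the identity is exactly what turns the inequality $\le$ of row domination into the inequality $\ge$ of column domination, so the two conditions are genuinely equivalent rather than merely analogous. Applying the same substitution to the strictness clause, $Re((e^k)^*Ad^{j_0})<Re((e^l)^*Ad^{j_0})$ transforms into $Re((d^{j_0})^*Ae^k)>Re((d^{j_0})^*Ae^l)$, that is, the strictness clause for columns with witness $d^{i_0}:=d^{j_0}$. Reading each equivalence from left to right yields one implication and from right to left the converse, so the ``if and only if'' follows at once.

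Finally, the strictly-dominated case of Definition \ref{def:6.4} is handled by the identical computation, simply dropping the separate strictness witness and replacing every $\le$ by $<$ throughout; the antisymmetry identity again flips each strict inequality into the corresponding reversed strict inequality. I do not anticipate a serious obstacle here: the only point demanding care is bookkeeping the conjugations and checking that the direction of every inequality is flipped consistently, together with an explicit appeal to the common-argument hypothesis so that the row-index set $\mathcal{I}$ and the column-index set $\mathcal{J}$ may legitimately be identified. Everything else is routine.
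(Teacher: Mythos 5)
Your proposal is correct and follows essentially the same route as the paper's proof: the antisymmetry identity $Re(u^*Av)=-Re(v^*Au)$ you isolate up front is exactly the paper's inline chain of conjugating the scalar, applying $A^*=-A$, and negating, and both arguments use the common-argument hypothesis to identify $\mathcal{I}$ with $\mathcal{J}$. No gaps; nothing further is needed.
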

\begin{proof}
We only show the result in the case of weak domination (strict domination is similarly proved). Note that $\mathcal{I}=\mathcal{J}$ since players $I$ and $II$ have the same strategy argument and the game is square. Row $k$ is weakly dominated by row $l$ if and only if:\\
\begin{equation*}
    Re((e^k)^*Ad^j)\leq Re((e^l)^*Ad^j)\;\;\forall j\in\mathcal{J}\;\mbox{ and }\exists j_0\in\mathcal{J}: \;Re((e^k)^*Ad^{j_0})<Re((e^l)^*Ad^{j_0})\;\Leftrightarrow
\end{equation*}
\begin{equation*}
    \Leftrightarrow Re(\overline{(e^k)^*Ad^j})\leq Re(\overline{(e^l)^*Ad^j})\;\;\forall j\in\mathcal{J}\;\mbox{ and }\exists j_0\in\mathcal{J}: \;Re(\overline{(e^k)^*Ad^{j_0}})<Re(\overline{(e^l)^*Ad^{j_0}})\;\Leftrightarrow
\end{equation*}
\begin{equation*}
\Leftrightarrow Re((d^j)^*A^*e^k)\leq Re((d^j)^*A^*e^l)\;\;\forall j\in\mathcal{J}\;\mbox{ and }\exists j_0\in\mathcal{J}: \;Re((d^{j_0})^*A^*e^k)<Re((d^{j_0})^*A^*e^l)\;\Leftrightarrow
\end{equation*}
\begin{equation*}
\Leftrightarrow -Re((d^j)^*A^*e^k)\geq -Re((d^j)^*A^*e^l)\;\forall j\in\mathcal{J}\;\mbox{and }\exists j_0\in\mathcal{J}: -Re((d^{j_0})^*A^*e^k)>-Re((d^{j_0})^*A^*e^l)
\end{equation*}
\begin{equation*}
\Leftrightarrow Re((d^j)^*Ae^k)\geq Re((d^j)^*Ae^l)\;\;\forall j\in\mathcal{J}\;\mbox{ and }\exists j_0\in\mathcal{J}: \;Re((d^{j_0})^*Ae^k)>Re((d^{j_0})^*Ae^l)
\end{equation*}\\
if and only if column $k$ is weakly dominated by column $l$.
\end{proof}\[\]
\begin{theorem}
Let $G_\mathcal{C}(A)$ be a two-player zero-sum symmetric complex game of common argument $\alpha$. Then:\\
\begin{enumerate}
    \item If $z^0$ is an optimal strategy for player $I$ then it is also an optimal strategy for player $II$. Correspondingly, if $w^0$ is an optimal strategy for player $II$ then it is also an optimal strategy for player $I$.
    \item If $(z^0,w^0)$ is a complex Nash equilibrium then $(w^0,z^0)$, $(z^0,z^0)$, $(w^0,w^0)$ are also complex Nash equilibria.
    \item $G_\mathcal{C}(A)$ is a fair complex game, that is $v=0$.
\end{enumerate}
\end{theorem}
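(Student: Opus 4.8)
The plan is to reduce all three claims to a single algebraic identity coming from skew-hermiticity, exploited through the fact that in a game of common argument both players share one and the same strategy set. Since $G_\mathcal{C}(A)$ is square and of common argument $\alpha$, we have $m=n$ and $S^m_\alpha=S^n_\beta=:S$, so $\mathcal{I}=\mathcal{J}$; and from $A=-A^*$ I would derive the skew-symmetry of the payoff: for any $z,w\in\mathbb{C}^n$ we have $\overline{z^*Aw}=(z^*Aw)^*=w^*A^*z=-w^*Az$, hence $Re(z^*Aw)=-Re(w^*Az)$. I would then prove part (3) first: since the inner and outer optimizations now range over the same set $S$, and using $\max(-f)=-\min f$,
\[
\overline{v}=\min_{w\in S}\max_{z\in S}Re(z^*Aw)=\min_{w\in S}\max_{z\in S}\bigl(-Re(w^*Az)\bigr)=-\max_{w\in S}\min_{z\in S}Re(w^*Az)=-\underline{v}.
\]
By the Minimax Theorem $\underline{v}=\overline{v}=v$, whence $v=-v$ and therefore $v=0$; the game is fair.

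For part (1) I would combine $v=0$ with the identity. If $z^0$ is optimal for player $I$, then $\min_{w\in S}Re((z^0)^*Aw)=\underline{v}=0$, i.e. $Re((z^0)^*Aw)\geq0$ for all $w\in S$. The skew identity then gives $Re(z^*Az^0)=-Re((z^0)^*Az)\leq0$ for all $z\in S$, so $\max_{z\in S}Re(z^*Az^0)\leq0=\overline{v}$; since $z^0\in S$ forces $\max_{z\in S}Re(z^*Az^0)\geq\overline{v}$, equality holds and $z^0$ is optimal for player $II$. The reverse statement follows by exchanging the roles of the two players.

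Finally, for part (2) I would use the product structure of the equilibria of a zero-sum game: pairing the conditions $\min_{w}Re((z^0)^*Aw)=v$ and $\max_{z}Re(z^*Aw^0)=v$ with Proposition \ref{pr:4.28} shows that $(z^0,w^0)$ is a complex Nash equilibrium if and only if $z^0$ is optimal for player $I$ and $w^0$ is optimal for player $II$. Given an equilibrium $(z^0,w^0)$, part (1) makes each of $z^0$ and $w^0$ optimal for both players simultaneously, and pairing any optimal strategy of $I$ with any optimal strategy of $II$ again yields an equilibrium; hence $(w^0,z^0)$, $(z^0,z^0)$ and $(w^0,w^0)$ are all complex Nash equilibria.

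The main obstacle is conceptual rather than computational: every step that swaps $z$ and $w$ silently uses that the two optimizations run over the \emph{same} set $S$, which is exactly the content of the common-argument hypothesis, so I would invoke that hypothesis explicitly at each such swap. I would also be careful to cite the Minimax Theorem of Section 5, which furnishes the existence and equality of the security levels needed to pass from $\overline{v}=-\underline{v}$ to $v=0$ and to identify the individual optimal strategies with the value.
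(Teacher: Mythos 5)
Your proof is correct, but it runs in the opposite direction from the paper's. The paper proves (1) first, by a chain of equivalences on the saddle-point inequalities of Proposition \ref{pr:4.28}: conjugating $Re(z^*Aw^0)\le Re((z^0)^*Aw^0)\le Re((z^0)^*Aw)$ and using $A=-A^*$ turns the saddle condition for $(z^0,w^0)$ into the saddle condition for $(w^0,z^0)$, so the equilibrium set itself is symmetric, with no reference to the value; part (2) then follows from (1) together with interchangeability (the paper cites Corollary \ref{cor:4.29}), and fairness (3) is obtained last by evaluating the payoff at a diagonal equilibrium, $v=Re((z^0)^*Az^0)=Re((z^0)^*A^*z^0)=-v$. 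You instead prove (3) first at the level of security values, deriving $\overline{v}=-\underline{v}$ directly from the identity $Re(z^*Aw)=-Re(w^*Az)$ and the fact that both optimizations range over the same set, then use $v=0$ and sign arguments to transfer optimality in (1), and finally assemble (2) from the product structure of equilibria. Both arguments rest on the same two ingredients, the skew identity and the common-argument hypothesis, which you rightly flag at every swap of $z$ and $w$. Your route buys fairness without exhibiting any equilibrium (only the Minimax Theorem is invoked), and your explicit derivation of the product structure from Proposition \ref{pr:4.28} actually fills in a step the paper leaves compressed: Corollary \ref{cor:4.29} by itself asserts only equality of values, whereas passing from the equilibria $(z^0,w^0)$ and $(w^0,z^0)$ to $(z^0,z^0)$ and $(w^0,w^0)$ requires exactly the interchangeability you spell out. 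The paper's route, conversely, yields the structural fact that $(z^0,w^0)$ is an equilibrium if and only if $(w^0,z^0)$ is, without first knowing the value, and avoids the (harmless) dependence of your part (1) on part (3).
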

\begin{proof}
\begin{enumerate}
    \item Let $z^0$ be an optimal strategy for player $I$ and $w^0$ an optimal strategy for player $II$ (such strategies always exist by the Minimax Theorem). Then, $(z^0,w^0)$ is a complex Nash equilibrium and we therefore have:\\
    \begin{equation*}
        Re(z^*Aw^0)\leq Re((z^0)^*Aw^0)\leq Re((z^0)^*Aw)\;\;\forall z,w\in S^{m}_\alpha\;\Leftrightarrow
    \end{equation*}
    \begin{equation*}
        \Leftrightarrow Re((w^0)^*A^*z)\leq Re((w^0)^*A^*z^0)\leq Re(w^*A^*z^0)\;\;\forall z,w\in S^{m}_\alpha\;\Leftrightarrow
    \end{equation*}
    \begin{equation*}
        \Leftrightarrow -Re((w^0)^*A^*z)\geq -Re((w^0)^*A^*z^0)\geq -Re(w^*A^*z^0)\;\;\forall z,w\in S^{m}_\alpha\;\Leftrightarrow
    \end{equation*}
    \begin{equation*}
        \Leftrightarrow Re((w^0)^*Az)\geq Re((w^0)^*Az^0)\geq Re(w^*Az^0)\;\;\forall z,w\in S^{m}_\alpha
    \end{equation*}\\
    if and only if $(w^0,z^0)$ is a complex Nash equilibrium. Therefore if $z^0$ is an optimal strategy for player $I$ then it is also an optimal strategy for player $II$, and if $w^0$ is an optimal strategy for player $II$ then it is also an optimal strategy for player $I$.\\
    \item If $(z^0,w^0)$ is a complex Nash equilibrium then $(w^0,z^0)$ is also a complex Nash equilibrium by $(1)$, and therefore by Corollary \ref{cor:4.29} $(z^0,z^0)$, $(w^0,w^0)$ are also complex Nash equilibria.\\
    \item Let $(z^0,w^0)$ be a complex Nash equilibrium. Then by $(2)$ $(z^0,z^0)$ is also a complex Nash equilibrium, hence:
    \begin{equation*}
        v=Re((z^0)^*Az^0)=Re(\overline{(z^0)^*Az^0})=Re((z^0)^*A^*z^0)=Re((z^0)^*(-A)z^0)=-v\Rightarrow v=0
    \end{equation*}
\end{enumerate}
\end{proof}
\[\]

\section*{Acknowledgements}
We would like to express our sincere appreciation to professor Solan E. for his extremely important and enlightening comments as well as to professor Melolidakis C. for introducing us to Game Theory and Linear Programming. We would also like to show our deep gratitude to mathematician and mentor Margaritis A. for his invaluable guidance and encouragement throughout these three and a half years of our under--graduate studies.
\[\]\[\]

\bibliographystyle{plain}

\end{document}